\newtheorem{thm}{Theorem}[section]
\newtheorem{cor}[thm]{Corollary}
\newtheorem{lem}[thm]{Lemma}
\newtheorem{prop}[thm]{Proposition}
\newtheorem*{prop*}{Proposition 2.7}
\theoremstyle{definition}
\newtheorem{defn}[thm]{Definition}
\theoremstyle{remark}
\newtheorem{rem}[thm]{Remark}
\numberwithin{equation}{section}
\newcommand{\lr}[1]{\langle #1 \rangle}
\newcommand{\mt}[1]{\mathfrak #1 }
\newcommand{\R}{\mathbb{R}}
\newcommand{\ellg}{\ell_{\gamma}}
\newcommand{\mbq}{\mathbbm{q}}
\newcommand{\mbr}{\mathbbm{r}}
\newcommand{\mbp}{\mathbbm{p}}
\newcommand{\tmbq}{\tilde{\mathbbm{q}}}
\newcommand{\tmbr}{\tilde{\mathbbm{r}}}
\newcommand{\tmbp}{\tilde{\mathbbm{p}}}
\newcommand{\msq}{\mathbbm{q}_{s}}
\newcommand{\msr}{\mathbbm{r}_{s}}
\newcommand{\msp}{\mathbbm{p}_{s}}
\title[Boltzmann equation with small initial data]
{On the Cauchy problem for the cutoff Boltzmann equation with small initial data}
\author{Ling-Bing He}
\address{Department of Mathematical Sciences, TsingHua University, Beijing, 100084, P.R.China}
\email{hlb@tsinghua.edu.cn}
\author{Jin-Cheng Jiang}
\address{Department of Mathematics, National Tsing Hua University, Hsinchu, Taiwan 30013, R.O.C}
\email{jcjiang@math.nthu.edu.tw}
\begin{document}

\begin{abstract}
We prove the global existence of the non-negative unique mild solution for the Cauchy problem 
of the cutoff Boltzmann equation for soft potential model $-1\leq \gamma< 0$  with the small initial data in  three dimensional 
space.  Thus our result fixes  the gap for the case $\gamma=-1$ in three dimensional space in the authors' previous 
work~\cite{HJ17} where the estimate for the loss term was  improperly used.  
The other gap in~\cite{HJ17} for the case $\gamma=0$ in two dimensional space is recently fixed by 
Chen, Denlinger and Pavlovi\'{c}~\cite{CDP21}.
The initial data $f_{0}$ is non-negative and  satisfies that  $\|\lr{v}^{\ellg} f_{0}(x,v)\|_{L^{3}_{x,v}}\ll1$  and $\|\lr{v}^{\ellg} f_0\|_{L^{15/8}_{x,v}}<\infty$  where $\ellg=0$ when 
$\gamma=-1$ and $\ellg=(1+\gamma)^{+}$  when $-1<\gamma<0$.  
We also show that the solution scatters with respect to the kinetic transport operator. 
The novel contribution of this work lies in the exploration of the symmetric property of the gain term  
 in terms of weighted estimate. It  is the key ingredient for solving the  model  
 $-1<\gamma<0$ when applying the Strichartz estimates.
\end{abstract}

\maketitle

\section{Introduction}

We consider the Cauchy problem for the cutoff Boltzmann equation 
\begin{equation}\label{E:Cauchy}
\left\{
\begin{aligned}
&{\partial_t f}+v\cdot\nabla_x f =Q(f,f)\\
&f(0,x,v)=f_0(x,v)
\end{aligned}
\right.
\end{equation}
in $(0,\infty)\times\mathbb{R}^N\times\mathbb{R}^N, N=2,3,$ where the initial data is small in $L^{N}_{x,v}$ space.
Recall that the collision operator $Q(f,f)$ is given by 
\[
Q(f,f)(v)=\int_{\mathbb{R}^N}\int_{\omega\in S^{N-1}}(f'f'_{*}-ff_{*}) B(v-v_{*},\omega)\; d\omega dv_{*},
\]
and $d\omega$ is the solid element in the direction of unit vector $\omega$.
We use the abbreviations $f'=f(x,v',t)\;,\;f'_{*}=f(x,v'_{*},t)\;,\;f_{*}=f(x,v_{*},t)$, 
and the relation between the pre-collisional velocities of particles and
after collision is given by
\begin{equation}\label{E:pre-collision}
v'=v-[\omega\cdot(v-v_{*})]\omega\;,\;v'_{*}=v_{*}+[\omega\cdot(v-v_{*})]\omega\;,\;\omega\in S^{N-1}.
\end{equation}

In this paper, we consider the cutoff soft potential model, i.e., the collision kernel $B$  being the product of 
kinetic part and angular part, 
\begin{equation}\label{D:kernel}
B(v-v_*,\omega)=|v-v_*|^{\gamma}\;b(\cos\theta),\;0\leq\theta\leq \pi/2\;,
\end{equation}
where 
\[
-N<\gamma<0\;,\;\cos\theta=\lr{\omega,(v-v_*)/|v-v_*|}\;,
\]
and the angular function $b(\cos\theta)$ satisfies the Grad's cutoff assumption 
\begin{equation}\label{D:Grad}
\int_{S^{N-1}} b(\cos\theta)d\omega<\infty.
\end{equation}
When $\gamma=0$, the kernel~\eqref{D:kernel} is called the Maxwell molecules. When the cutoff condition~\eqref{D:Grad}
is satisfied, the collision operator $Q$ can be split into the gain term $Q^{+}$ and the loss term $Q^{-}$.    
It is useful to introduce the bilinear gain term
\[
Q^+(f,g)(v)=\int_{{\R}^N}\int_{S^{N-1}}  f(v')g(v_*') B(v-v_*,\omega)
d\omega dv_*,
\] 
and the bilinear loss term
\[
Q^{-}(f,g)(v)=\int_{{\R}^N}\int_{S^{N-1}}  f(v)g(v_*) B(v-v_*,\omega)
d\omega dv_*.
\]

\subsection{Short review} Let us briefly recall the progress on the Cauchy problem ~\eqref{E:Cauchy} for cutoff model with small initial data. 
To the best of our knowledge, Illner and Shinbrot~\cite{IS84} first showed the global existence of solutions for 
the Cauchy problem~\eqref{E:Cauchy} for several cutoff models when the initial data has exponential decay 
in spatial variable and has suitable weight in velocity variable.  They also discussed the asymptotic behavior of the solutions. 
The iteration method in~\cite{IS84}  comes from the earlier work~\cite{KS78} of Kaniel and Shinbrot  who designed it for 
the study of the initial boundary value problem in a bounded domain. Now it is  
called the Kaniel-Shinbrot iteration.   
Various of results about the   small initial data Cauchy problem ~\eqref{E:Cauchy} for different models 
were then obtained during 
that decade by many authors through the same iteration or fixed point argument, see~\cite{BPT88, G96} and reference 
therein for more details. Please note that  the assumption that the initial data has exponential 
decay in  spatial variable or in velocity variable is necessary to get these results.  

With the first appearance of Strichartz estimates for the kinetic equation in the note of Castella and 
Perthame~\cite{CP96}, these  new estimates seem to be a promising tool to solve~\eqref{E:Cauchy} 
with initial data being small in Lebesgue space instead of decaying exponentially.
Indeed, with initial data small in Lebesgue assumption, Bournaveas et al.~\cite{BCGP08} used it to prove the existence 
of global weak solution 
for a nonlinear kinetic system modeling chemotaxis. For the Boltzmann 
equation, Ars\'{e}nio~\cite{Ars11} considered a non-conventional collision kernel whose kinetic part is 
$L^p$ integrable for some $p$ depending on dimension, and then proved the existence of global weak solution
for small data. But the uniqueness of the solution is unknown. The reason that the Strichartz estimates is not handy as one expects in 
solving small initial data Boltzmann equation lies in the fact that the loss term does not enjoy the same symmetry as the gain 
term does in the Lebesgue space. More precisely, the gain and loss terms both satisfy 
\begin{equation}
\|Q^{\pm}(f,g)\|_{L^{\mt{r}}_{v}(\mathbb{R}^N)}\leq C\|f\|_{L^{\mt{p}}_{v}(\mathbb{R}^N)}\|g\|_{L^{\mt{q}}_{v}(\mathbb{R}^N)},
\end{equation}
where the norm is taking on the velocity variable and the exponents $\mt{p},\mt{q},\mt{r}$ satisfying the scaling condition
\begin{equation}
1/\mt{p}+1/\mt{q}=1+\gamma/N+1/\mt{r},
\end{equation}
while the estimate for the loss term requires additional condition, 
\begin{equation}\label{non-symmetry}
1/\mt{p}<1/\mt{r}, 
\end{equation}
which means that $f$ and $g$ need to be treated differently when dealing with the loss term.  
In the authors' previous work~\cite{HJ17}, the constraint~\eqref{non-symmetry}, included in the proof but 
not the statement of Lemma 2.4 there, was neglected when applying the Strichartz estimates to solve the Cauchy problem.  
Therefore the result in~\cite{HJ17}, c.f. Proposition~\ref{Thm-Gain},  holds for the gain term only 
Boltzmann equation instead of full equation (see also the paragraph before and after Lemma~\ref{T:Loss} below).
 The work ~\cite{HJ17} pointed out that when the exponent
of kinetic part equals $\gamma=2-N$, we can find suitable Strichartz spaces, c.f.~\eqref{solvable-g} , 
where the global mild solution  for gain term only Boltzmann equation exists if the initial data $f_0$ is small in 
$L^N_{x,v}$.  

Very recently, Chen, Denlinger and 
Pavlovi\'{c}~\cite{CDP21} studied the full Boltzmann equation for the case $N=2$, Maxwell molecules, by a 
 different approach. 
Their idea can be sketched as  follows. 
Using the fact that the kinetic transport equation can be converted to the free Schr\"{o}dinger equation by Wigner transform 
and vice verse by inverse Wigner transform, they proved the spacetime estimates for the nonlinear 
Schr\"{o}dinger equation to conclude the existence of mild solution for  gain term only Boltzmann equation 
when the initial data is small enough in $L^{2}_{x,v}$.  Due to the fact that kinetic transport operator, weight in velocity
and differential operator for spatial variable are commuting with each other,  they showed that if $f_0$ is small in 
$L^2_{x,v}$ and additionally $\| \lr{v}^{\frac{1}{2}+}\lr{\nabla_x}^{\frac{1}{2}+}f_0\|_{L^2_{x,v}}$ is finite, then  
the quantity $\|\lr{v}^{\frac{1}{2}+}\lr{\nabla_x}^{\frac{1}{2}+}f_{+}\|_{L^{\infty}_t L^2_{x,v}}$  remains finite where $f_{+}$
denotes the solution of the gain term only Boltzmann equation. The propagation of regularity and moment  
thus ensures the loss term is well-defined when plugging in $f_{+}$. Note  
the fact that if the initial $f_0$ is non-negative, then solution of the gain term only Boltzmann equation is also non-negative. 
Combining all the facts, using the solution  of gain term only Boltzmann equation as an upper bound of the 
``beginning condition'' of the Kaniel-Shinbrot iteration, one can ensure the global existence of the mild solution for full 
equation.  The uniqueness of the solution is not provided by Kaniel-Shinbrot iteration. Fortunately it can be saved by 
the fact that the solution of  the gain term only Boltzmann equation is an upper bound of that for the full equation and the former
lies in solution space already.  The scattering and propagation of moment and regularity for the solution of the full equation
are also proved by the similar idea. 

\subsection{Main results} The main purpose of this paper is to solve the problem~\eqref{E:Cauchy} for the case $N=3$ when 
$\gamma$ satisfying $-1\leq \gamma< 0$. Instead of using
the correspondence between the Sch\"{o}rdinger equation and kinetic transport equation, the result of~\cite{HJ17} 
for the gain term only Boltzmann equation will be our starting point, c.f. Proposition~\ref{Thm-Gain}. 
On the other hand, we should adopt the strategy of~\cite{CDP21} to recover the solution for the full equation 
from that for the gain  term only Boltzmann equation. To get rid of the fact that the loss term is not symmetric, we 
also need an additional assumption about the initial data besides it is small in velocity-weighted $L^{3}_{x,v}$ 
(no weighted when $\gamma=-1$). We should assume that  the initial data is also bounded in velocity-weighted 
$L^{15/8}_{x,v}$ (no weighted when $\gamma=-1$).  Here the exponent $15/8$ is just one of possible options. 
 Please note that our additional assumption for the initial 
data does not require the additional regularity in spatial variable nor additional weight in velocity variable.

This difference on assumptions reflects the difference of the method. It is 
interesting and worth to explore more about this. First we note that the exponent $\gamma=2-N$ of kinetic part of the collision 
kernel is special in the sense that it is scaling critical case in the content of dispersive equation. The spacetime 
$L^{2}_{x,v}$ estimates in~\cite{CDP21} is non-trivial since it is an end point estimate. Comparing $L^{2}_{x,v}$ space with 
the $L^{\mbr}_{x}L^{\mbp}_{x},\; {\mbr}<N=2, {\mbp}>2$ in Proposition~\ref{Thm-Gain}, it is not surprising that   
to recover the solution for full equation from that of gain term only equation in the $L^{2}_{x,v}$ space, one needs 
to require the initial data has additional regularity in spatial variable as well as additional weight in velocity variable.
If one follows the approach of~\cite{CDP21} to study the case $N=3$, both requirements seem to be unavoided again.
Also the other difficulty that will encounter is that the exponent of kinetic part of the collision kernel under consideration 
is $-1\leq \gamma<0$ when $N=3$ which is unlike
$\gamma=0$ when $N=2$ as the latter is more convenient when applying the Fourier transform to the gain term of 
the collision operator.     
 
The novel contribution of this work is that the symmetric property of the gain term is explored further in terms of 
weighted estimate and this is the key step to study the model $-1<\gamma<0$ 
when applying the Strichartz estimate to solve the porblem.
 The gain term enjoys  two different estimates based on two different scaling relations, i.e.,   
\[
\|\lr{v}^{\ell}Q^{+}(f,g)\|_{L^{\mt{r}}_{v}(\mathbb{R}^N)}\leq C\|\lr{v}^{\ell} f\|_{L^{\mt{p}}_{v}(\mathbb{R}^N)}
\|\lr{v}^{\ell}g\|_{L^{\mt{q}}_{v}(\mathbb{R}^N)}, 
\]
where $1/\mt{p}+1/\mt{q}=1+\gamma/N+1/\mt{r},\;\ell\geq 0$, and 
\[
\|\lr{v}^{\ell} Q^{+}(f,g)\|_{L^{\mt{r}_{m}}}\leq C(\mt{p}_{m},\ell) \|\lr{v}^{\ell} f\|_{L^{\mt{p}_{m}} }\|\lr{v}^{\ell} g\|_{L^{\mt{q}_{m}}}
\]
where ${1}/{\mt{p}_{m}}+{1}/{\mt{q}_{m}}+{1}/{m}=1+{\gamma}/{N}+{1}/{\mt{r}_{m}},\;\ell>N/m$.
On the other hand, the loss term only satisfies the second estimate above while the constraint~\eqref{non-symmetry} is unchanged. Please see  Proposition~\ref{P:Convolution-w} and Proposition~\ref{P:Loss} for more details. 
This new discovery on the property of gain term allows us to solve the Cauchy problem~\eqref{E:Cauchy}  
for the soft potential model with exponent $-1<\gamma<0$ beyond $\gamma=-1$.   
 It seems to us that this approach is more straight forward for the case $N=3$, thus our argument is shorter than 
 that in~\cite{CDP21}.  Unfortunately this method does not work for the case $N=2$ since $\gamma<0$ is below
the critical case $\gamma=0$ while $-1<\gamma<0$ is above $\gamma=-1$ for $N=3$.

To state the main results, let us introduce the mixed Lebesgue norm
\[
\|f(t,x,v)\|_{L^q_tL^r_xL^p_v}
\] where the notation  $L^q_tL^r_xL^p_v$ stands for the space $L^q(\mathbb{R};L^r
(\mathbb{R}^N;L^p(\mathbb{R}^N)))$. It is understood that we use 
$L^q_t(\mathbb{R})=L^q_t([0,\infty))$ for the well-posedness problem which can 
be done by imposing support restriction to the inhomogeneous  Strichartz estimates. 
We use $L^a_{x,v}$ to denote $L_x^a(\mathbb{R}^N;L_v^a(\mathbb{R}^N))$.

We also need to give a precise meaning of the scattering of the solution with respect to kinetic transport operator. 
Here we  say that a global solution $f\in C([0,\infty),L^a_{x,v})$ scatters in $L^a_{x,v}$ as 
$t\rightarrow\infty$ if there exits $f_{\infty}\in L^a_{x,v}$ such that 
\begin{equation}
\|f(t)-U(t)f_{\infty}\|_{L^a_{x,v}}\rightarrow 0
\end{equation}
where $U(t)f(x,v)=f(x-vt,v)$ is the solution map of the kinetic transport equation 
\[
\partial_t f+v\cdot\nabla_x f=0. 
\]
Please see the interesting discussion in~\cite{BGGL16} about scattering of the solution and its relation with H-theorem.

For the purpose of clear representation, we should prove first the case $\gamma=-1$ then generalize 
the argument to the case $-1<\gamma<0$. 
First we state the result for the case $\gamma=-1$ as  follows. 
\begin{thm}\label{result1} 
Let $N=3$ and assume the kernel B in~\eqref{D:kernel} has $\gamma=-1$ and satisfies~\eqref{D:Grad}. 
There exists a small number $\eta>0$ such that if the initial data 
$$
f_{0}\in {\mathbb B}_{\eta}=\{f_{0} | f_0\geq 0,\; \|f_{0}\|_{L^{3}_{x,v}}<\eta,\;\|f_0\|_{L^{15/8}_{x,v}}<\infty\}\subset L^3_{x,v},
$$
then the Cauchy problem~\eqref{E:Cauchy} admits a unique and non-negative 
mild solution 
\[
 f\in C([0,\infty),L^3_{x,v})\cap
L^{\mbq}([0,\infty],L^{\mbr}_xL^{\mbp}_v), 
\]
where the triple $(\mbq,\mbr,\mbp)$ lies in the set
\begin{equation}\label{solvable}
\{ (\mbq,\mbr,\mbp) | \;\frac{1}{\mbq}=\frac{3}{\mbp}-1\;,\;\frac{1}{\mbr}=\frac{2}{3}-\frac{1}{\mbp}\;,\;
\frac{1}{3}<\frac{1}{\mbp}<\frac{4}{9}\}. 
\end{equation} 
The solution map $f_0\in {\mathbb B}_{\eta}\rightarrow f\in L^{\mbq}_tL^{\mbr}_xL^{\mbp}_v$
is Lipschitz continuous and the solution $f$ scatters with respect to the kinetic 
transport operator in $L^3_{x,v}$. 
\end{thm}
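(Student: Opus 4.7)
The plan is to build the solution of the full equation~\eqref{E:Cauchy} on top of the gain-term-only solution supplied by Proposition~\ref{Thm-Gain}, running a Kaniel-Shinbrot iteration in the spirit of~\cite{CDP21}. The first step is to apply Proposition~\ref{Thm-Gain} (with $\gamma=-1$) to $f_0\in\mathbb{B}_\eta$, which produces a non-negative mild solution $f_+$ of $\partial_t f_+ + v\cdot\nabla_x f_+ = Q^{+}(f_+,f_+)$ lying in $C([0,\infty);L^3_{x,v}) \cap L^{\mbq}_t L^{\mbr}_x L^{\mbp}_v$ for every admissible triple in the set~\eqref{solvable}, with norm bounded by a multiple of $\|f_0\|_{L^3_{x,v}}<\eta$. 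Because $f_+$ majorizes any non-negative mild solution of the full equation, it will play the role of the upper ``beginning condition'' for the iteration.

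The crux of the argument is to propagate a second, auxiliary Strichartz bound for $f_+$ rooted in the hypothesis $\|f_0\|_{L^{15/8}_{x,v}}<\infty$. Using the $m$-indexed bilinear estimate for $Q^{+}$ of Proposition~\ref{P:Convolution-w}, combined with the Strichartz estimate at the $L^{15/8}_{x,v}$ scaling, I expect to obtain
\[
\|f_+\|_{L^{\tmbq}_t L^{\tmbr}_x L^{\tmbp}_v}\lesssim \|f_0\|_{L^{15/8}_{x,v}}
\]
for a second admissible triple $(\tmbq,\tmbr,\tmbp)$ chosen so that, in tandem with the primary triple $(\mbq,\mbr,\mbp)$ of Step 1, the asymmetric loss estimate of Proposition~\ref{P:Loss} (equivalently Lemma~\ref{T:Loss}) yields $Q^{-}(f_+,f_+)\in L^1_t L^3_{x,v}$. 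With both Strichartz controls in hand, I run the Kaniel-Shinbrot iteration with $f^{(0)}=0$, $F^{(0)}=f_+$ and common data $f_0$, defined by
\[
\left\{
\begin{aligned}
&(\partial_t+v\cdot\nabla_x)f^{(n+1)} + Q^{-}(f^{(n+1)},F^{(n)}) = Q^{+}(f^{(n)},f^{(n)}),\\
&(\partial_t+v\cdot\nabla_x)F^{(n+1)} + Q^{-}(F^{(n+1)},f^{(n)}) = Q^{+}(F^{(n)},F^{(n)}).
\end{aligned}
\right.
\]
The sandwich $0\leq f^{(n)}\leq F^{(n)}\leq f_+$ is preserved through the iteration, and monotone convergence produces a common limit $f$ that solves~\eqref{E:Cauchy} in the mild sense.

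Uniqueness and Lipschitz dependence on the data follow from the fact that every mild solution inherits the bound $f\leq f_+$ and therefore lies in $L^{\mbq}_t L^{\mbr}_x L^{\mbp}_v$: a difference estimate using the bilinear $Q^{+}$ estimate and the asymmetric $Q^{-}$ estimate contracts in this Strichartz space, pinning down the solution and giving continuous dependence on $f_0$. Scattering in $L^3_{x,v}$ is extracted from $\|Q(f,f)\|_{L^1_t L^3_{x,v}}<\infty$ by applying Duhamel's formula to $U(-t)f(t)$ and invoking Cauchy's criterion to identify $f_\infty$. The principal obstacle is precisely the construction of the second Strichartz bound: because the loss term lacks the symmetric bilinear estimate in Lebesgue spaces (the constraint~\eqref{non-symmetry}), the primary Strichartz norm on $f_+$ alone is insufficient to control $Q^{-}(f_+,f_+)$ globally in time. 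The $L^{15/8}_{x,v}$ hypothesis is tailored to supply a second admissible triple whose interaction with the primary one, via the newly observed $m$-indexed symmetric gain estimate of Proposition~\ref{P:Convolution-w}, makes the loss term time-integrable; matching the two sets of exponents so that all scaling identities close is the delicate bookkeeping that drives the whole argument.
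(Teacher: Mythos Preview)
Your overall architecture matches the paper's: solve the gain-only equation first, propagate a second Strichartz bound from the $L^{15/8}_{x,v}$ hypothesis, run Kaniel--Shinbrot with $0$ and $f_+$ as the initial barriers, then close uniqueness and scattering. A few of your technical choices, however, do not line up with what actually works in the $\gamma=-1$ case.

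First, the $m$-indexed gain estimate of Proposition~\ref{P:Convolution-w} is not the right tool here: it requires a velocity weight $\ell>N/m>0$, and Theorem~\ref{result1} carries no weight. For $\gamma=-1$ the paper obtains the auxiliary bound $\|f_+\|_{L^{q_2}_tL^{r_2}_xL^{p_2}_v}<\infty$ (with $(1/q_2,1/r_2,1/p_2)=(1/2,11/30,21/30)$ and $a_2=15/8$) using the \emph{unweighted} estimate of Proposition~\ref{P:Convolution} with an asymmetric choice of exponents; see Proposition~\ref{P-p2-est} and Corollary~\ref{C-L-p2}. The $m$-indexed estimate is precisely what is new for $-1<\gamma<0$ in Theorem~\ref{result2}.

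Second, the loss term is not placed in $L^1_tL^3_{x,v}$. The available Strichartz control on $f_+$ (solvable triples have $1/\mbp>1/3$, and $1/p_2=7/10$) never supplies an $L^{\mt p}_v$ norm with $1/\mt p<1/3$, which is what the asymmetry constraint $1/\mt p<1/\mt r$ would force for $\mt r=3$. What Corollary~\ref{C-L-p2} gives instead is $Q^{-}(f_1,f_2)\in L^{\tilde q_2'}_tL^{\tilde r_2'}_xL^{\tilde p_2'}_v$, a dual Strichartz space at the $a_2=15/8$ level. Consequently the closing step $g=h$ (Lemma~\ref{w-equation}) and the uniqueness argument are run in the $L^{q_2}_tL^{r_2}_xL^{p_2}_v$ norm, not the primary $L^{\mbq}_tL^{\mbr}_xL^{\mbp}_v$ norm. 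A direct difference estimate in the primary space fails because the term $Q^{-}(w,g)$ has $w$ in the first slot, which the asymmetric loss estimate does not accommodate; the paper removes that term by writing the Duhamel formula with the exponential factor $e^{-\int_s^t U(t-\tau)L(g)\,d\tau}\le 1$ and using $g\ge 0$.

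Finally, scattering is not obtained from $\|Q(f,f)\|_{L^1_tL^3_{x,v}}<\infty$. The paper never establishes that; it uses nonnegativity of $f$ and the pointwise inequality $\int_0^\infty U(-s)Q^{-}(f,f)\,ds\le f_0+\int_0^\infty U(-s)Q^{+}(f,f)\,ds$, together with the dual homogeneous Strichartz bound for $Q^{+}$ at the $a=3$ level, to get convergence of $U(-t)f(t)$ in $L^3_{x,v}$.
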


Next we state the result for the case $-1<\gamma<0$. This part is not studied in the authors' previous work~\cite{HJ17},
even for the gain term only Boltzmann equation.
We use the notation $\ellg^+$ to denote $\ellg+\varepsilon$ where $\varepsilon>0$ is arbitrary small. 
The result is as follows. 
\begin{thm}\label{result2}
Let $N=3$ and assume the kernel B~\eqref{D:kernel} has $-1<\gamma<0$  and satisfies~\eqref{D:Grad}. 
Let $\ellg=(1+\gamma)^+<3/2$. 
There exists a small number $\eta>0$ such that if the initial data 
$$
f_{0}\in {\mathbb B}^{\ellg}_{\eta}=\{f_{0} | f_0\geq 0,\; \| \lr{v}^{\ellg} f_{0}\|_{L^{3}_{x,v}}<\eta,\;
\|\lr{v}^{\ellg}f_0\|_{L^{15/8}_{x,v}}<\infty\}\subset L^3_{x,v},
$$
 then the Cauchy problem~\eqref{E:Cauchy} admits a unique and non-negative 
 mild solution 
\[
\lr{v}^{\ellg} f\in C([0,\infty),L^3_{x,v})\cap
L^{\mbq}([0,\infty],L^{\mbr}_xL^{\mbp}_v), 
\]
where the triple $(\mbq,\mbr,\mbp)$ lies in the set~\eqref{solvable}.
The solution map $\lr{v}^{\ellg}f_0\in {\mathbb B}^{\ellg}_{\eta}\rightarrow \lr{v}^{\ellg} 
f\in L^{\mbq}_tL^{\mbr}_xL^{\mbp}_v$ 
is Lipschitz continuous and the solution $\lr{v}^{\ellg} f$ scatters with respect to the kinetic 
transport operator in $L^3_{x,v}$. 
\end{thm}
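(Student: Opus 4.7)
My plan follows the strategy outlined in the introduction: solve the weighted gain term only equation, use its solution as a pointwise upper bound in a Kaniel--Shinbrot iteration, and transfer uniqueness and scattering from the gain-only solution to the full equation. Put $F = \lr{v}^{\ellg} f$. Since $|v|^2 + |v_*|^2 = |v'|^2 + |v_*'|^2$, one has $\lr{v}^{\ellg} \leq \lr{v'}^{\ellg} \lr{v_*'}^{\ellg}$ for $\ellg \geq 0$, so $\lr{v}^{\ellg} Q^+(f,f) \leq Q^+(F, F)$ pointwise. Combining this with the symmetric weighted bilinear estimate of Proposition~\ref{P:Convolution-w} and the inhomogeneous Strichartz estimates for the kinetic transport operator used in~\cite{HJ17}, the Duhamel map for $F$ becomes a contraction on $L^{\mbq}_t L^{\mbr}_x L^{\mbp}_v$ with $(\mbq,\mbr,\mbp)$ as in~\eqref{solvable}, provided $\|F_0\|_{L^3_{x,v}}$ is smaller than some universal $\eta>0$. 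This produces a unique nonnegative global solution $F_+ = \lr{v}^{\ellg} f_+$ of the gain term only equation.

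The next step is to propagate the auxiliary $L^{15/8}_{x,v}$ hypothesis along $F_+$. I would iterate the Duhamel formula in $L^\infty_t L^{15/8}_{x,v}$ using the second, asymmetric form of Proposition~\ref{P:Convolution-w}: with an appropriate choice of the auxiliary exponent $m$ satisfying $\ellg > 3/m$ and the scaling $1/\mt{p}_m + 1/\mt{q}_m + 1/m = 1 + \gamma/3 + 1/\mt{r}_m$, one bounds $\|\lr{v}^{\ellg} Q^+(f_+, f_+)\|_{L^1_t L^{15/8}_{x,v}}$ by the Strichartz norm of $F_+$ times $\|F_+\|_{L^\infty_t L^{15/8}_{x,v}}$, closing a linear Gronwall-type estimate. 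Combined with Proposition~\ref{P:Loss}, this places $\lr{v}^{\ellg} Q^-(f_+, f_+)$ in a suitable spacetime space. I would then run the Kaniel--Shinbrot iteration with beginning condition $l_0 \equiv 0$ and $u_0 = f_+$: because $f_+$ dominates the mild form of the full equation, the iterates form nested sandwiches $0 = l_0 \leq l_n \leq u_n \leq u_0 = f_+$, and their common limit $f$ is a nonnegative mild solution of the full equation with $\lr{v}^{\ellg} f$ in the same Strichartz space as $F_+$.

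Uniqueness follows by observing that any nonnegative mild solution $\tilde f$ with $\lr{v}^{\ellg} \tilde f$ in the Strichartz space obeys $\tilde f \leq f_+$ by the same comparison; then the difference $f - \tilde f$ is estimated with the weighted bilinear inequalities of Propositions~\ref{P:Convolution-w} and~\ref{P:Loss}, and the resulting inequality is closed by the smallness of $\eta$. Lipschitz dependence comes from the same contraction, and scattering of $\lr{v}^{\ellg} f$ in $L^3_{x,v}$ follows once one shows $\lr{v}^{\ellg} Q(f,f) \in L^1_t L^3_{x,v}$, which is a direct consequence of the Strichartz and $L^{15/8}$ bounds. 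The principal obstacle throughout is the asymmetry~\eqref{non-symmetry} of the loss term: only the gain term admits the symmetric weighted estimate that fits naturally with Strichartz, so $Q^-$ has to be handled through the second form of Proposition~\ref{P:Convolution-w}, and the delicate interplay between the weight constraint $\ellg > 3/m$, the scaling condition, and Strichartz admissibility is precisely what forces both the weight $\ellg = (1+\gamma)^+$ and the auxiliary exponent $15/8$.
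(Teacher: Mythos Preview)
Your overall architecture—solve the weighted gain-only equation, use it as the upper envelope for Kaniel--Shinbrot, then transfer uniqueness and scattering—is exactly the paper's plan. But the second paragraph contains a genuine gap. You propose to propagate the auxiliary hypothesis in $L^\infty_t L^{15/8}_{x,v}$ by placing $\lr{v}^{\ellg}Q^+(f_+,f_+)$ in $L^1_t L^{15/8}_{x,v}$ and bounding this by the Strichartz norm of $F_+$ times $\|F_+\|_{L^\infty_t L^{15/8}_{x,v}}$. This fails already at the H\"older-in-time step: every solvable triplet has $\mbq>3$, so $1/\mbq+1/\infty\neq 1$. The spatial side is equally obstructed, since matching $L^{15/8}_x$ on the output with one factor in $L^{\mbr}_x$ and the other in $L^{15/8}_x$ would force $\mbr=\infty$. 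The paper does \emph{not} work in the isotropic pair $(L^\infty_t L^{15/8}_{x,v},\,L^1_t L^{15/8}_{x,v})$; instead it selects a second genuinely mixed KT-admissible triplet $(q_2,r_2,p_2)$ with $\mathrm{HM}(p_2,r_2)=15/8$ and $q_2=2$ (Proposition~\ref{P-p2-est-w}), together with a conjugate triplet $(\tilde q_2',\tilde r_2',\tilde p_2')$, so that
\[
\|\lr{v}^{\ellg}Q^+(f_+,f_+)\|_{L^{\tilde q_2'}_tL^{\tilde r_2'}_xL^{\tilde p_2'}_v}
\le C\,\|\lr{v}^{\ellg}f_+\|_{L^{\mbq}_tL^{\mbr}_xL^{\mbp}_v}\,
\|\lr{v}^{\ellg}f_+\|_{L^{q_2}_tL^{r_2}_xL^{p_2}_v}.
\]
It is this second mixed norm, not $L^\infty_t L^{15/8}_{x,v}$, that makes $L(g_1)$ pointwise well defined, that furnishes the loss-term bound of Corollary~\ref{C-L-p2-w}, and that closes the $g=h$ step via the continuity argument in Lemma~\ref{w-equation} (not merely by smallness of $\eta$, as you suggest). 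For the same reason your scattering claim that $\lr{v}^{\ellg}Q(f,f)\in L^1_t L^3_{x,v}$ is not what is actually proved; the paper controls $Q^+$ in $L^{\tmbq'}_tL^{\tmbr'}_xL^{\tmbp'}_v$ and invokes the dual homogeneous Strichartz estimate, then dominates the $Q^-$ contribution pointwise.

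A smaller but related point concerns your first paragraph. The pointwise inequality $\lr{v}^{\ellg}Q^+(f,f)\le Q^+(F,F)$ does not by itself close the contraction on the triplets~\eqref{solvable}, because those triplets are calibrated to the $\gamma=-1$ scaling $2/\mbp=2/3+1/\tmbp'$, while the unweighted estimate for $Q^+(F,F)$ carries the scaling $2/\mbp=1+\gamma/3+1/\tmbp'$. The mechanism the paper actually uses at the gain-only step is the \emph{second} form~\eqref{W-convolution-2} of Proposition~\ref{P:Convolution-w}: taking $1/m=(1+\gamma)/3$ converts the $\gamma$-scaling back into the $(-1)$-scaling at the price of $\ellg>3/m=1+\gamma$, which is precisely the origin of $\ellg=(1+\gamma)^+$. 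You invoke this only for $Q^-$ in your last paragraph, but it is needed for $Q^+$ already in the contraction for the gain-only equation.
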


Finally, we note that the local  wellposedness result of Theorem 1.3 in~\cite{HJ17} holds for gain term 
only Boltzmann equation instead of full equation due to the same reason mentioned above. 
The method of Theorem~\ref{result1} can also fix the problem and we have the following result.

\begin{thm}\label{T3}
Let $N=2$ or $3$ and $B$ defined in~\eqref{D:kernel} satisfies~\eqref{D:Grad}  and 
$-N<\gamma<2-N$.  The Cauchy problem~\eqref{E:Cauchy} is locally wellposed
when the initial data lies in 
\[
B_{R}=\{f_0\in L^{a_{s}}_{x,v}
(\mathbb{R}^N\times\mathbb{R}^N): f_{0}\geq 0,\; \|f_0\|_{L^{a_{s}}_{x,v}}<R\;,\;\|f_0\|_{L^{m}_{x,v}}<\infty\}
\subset L^{a_{s}}_{x,v}
\]
where $a_{s}={2N}/({\gamma+N}),\;m=2N/[(\gamma+N)(5\alpha-1)]$ and 
${1}/{2}<\alpha<(N+1)/(2N)$.
More specially, for any $R>0$ there exists a $T=T(\msr,\msp,R)$ such that for all $f_0\in B_{R}$, the Cauchy problem~\eqref{E:Cauchy} admits a unique mild solution 
\[
f\in C([0,T),L^{a_{s}}_{x,v})\cap
L^{\msq}([0,T],L^{\msr}_xL^{\msp}_v),
\]
where the triple $(\msq,\msr,\msp)$ lies in the set
\begin{equation}\label{solvable-2}
\begin{split}
\{ (\frac{1}{\msq},\frac{1}{\msr},\frac{1}{\msp})  |  &\frac{1}{\msq}=\frac{(2\alpha-1)(\gamma+N)}{2} ,\\
&{\hskip 1cm}\;\frac{1}{\msr}=\frac{(1-\alpha)(\gamma+N)}{N},
\frac{1}{\msp}=\frac{\alpha(\gamma+N)}{N}\}.
\end{split}
\end{equation} 
The solution map $f_0\in B_{R}\rightarrow f\in L^{\msq}([0,T];L^{\msr}_xL^{\msp}_v)$ 
is Lipschitz continuous.
\end{thm}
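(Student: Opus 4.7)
The plan is to mirror the two-step architecture behind Theorem~\ref{result1}, but localized to a finite interval $[0,T]$. First, I would construct a mild solution $f^+$ of the gain-term-only equation on $[0,T]$ by a Picard contraction of the map
\[
\Phi^+(f)(t) = U(t)f_0 + \int_0^t U(t-s) Q^+(f,f)(s)\, ds
\]
in $L^{\msq}([0,T]; L^{\msr}_x L^{\msp}_v)$. The window $1/2 < \alpha < (N+1)/(2N)$ makes $(\msq, \msr, \msp)$ a Strichartz-admissible triple for the kinetic transport operator, and the scaling $a_s = 2N/(\gamma+N)$ lines up the homogeneous Strichartz bound $\|U(t)f_0\|_{L^{\msq}_t L^{\msr}_x L^{\msp}_v} \lesssim \|f_0\|_{L^{a_s}_{x,v}}$ with the symmetric bilinear estimate for $Q^+$ supplied by (the weight-free case of) Proposition~\ref{Thm-Gain}. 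Since $T$ is finite, H\"older in time buys a positive power $T^{\theta}$ on the nonlinear term, so the contraction closes on a ball of radius $\sim R$ provided $T=T(R)$ is chosen sufficiently small. Positivity of $Q^+$ keeps $f^+$ non-negative.

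Next, to render the Kaniel-Shinbrot iteration applicable to the full equation, one must ensure that $Q^-(f^+, f^+)$ is well-defined and controllable. The loss term lacks the symmetry of $Q^+$: the relevant bilinear Lebesgue bound requires the asymmetric condition $1/\mt{p} < 1/\mt{r}$, so it cannot be applied with $f^+$ sitting in both slots in the same space. This is where the extra hypothesis $\|f_0\|_{L^m_{x,v}} < \infty$ enters: invoking the Young-type estimate from Proposition~\ref{P:Loss} with auxiliary exponent $1/m$ in the scaling relation $1/\mt{p}_m + 1/\mt{q}_m + 1/m = 1 + \gamma/N + 1/\mt{r}_m$, one propagates an $L^\infty_t L^m_{x,v}$ bound for $f^+$ by a parallel fixed-point argument. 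The particular value $m = 2N/[(\gamma+N)(5\alpha-1)]$ is calibrated so that H\"older in $v$ between $L^{\msp}_v$ and $L^m_v$ lands in the dual Strichartz exponent, while H\"older in $t$ still yields a positive power of $T$.

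With $f^+ \geq 0$ controlled in both $L^{\msq}_t L^{\msr}_x L^{\msp}_v$ and $L^\infty_t L^m_{x,v}$, a Kaniel-Shinbrot iteration with beginning pair $(0, f^+)$ produces a monotone sequence converging in $L^{a_s}_{x,v}$ on $[0,T]$ to a non-negative mild solution $f$ of the full equation satisfying $0 \leq f \leq f^+$. This sandwich automatically places $f$ in the Strichartz space and supplies uniqueness and Lipschitz dependence, since the difference of two such solutions can be estimated by the same bilinear gain and loss estimates used for existence. The main obstacle is the exponent bookkeeping: one must check that $(a_s, m, \msr, \msp)$ and the scaling relations for the loss term are simultaneously compatible with the asymmetric constraint $1/\mt{p} < 1/\mt{r}$ and with the production of a strictly positive time factor $T^{\theta'}$, and it is precisely the joint choice of $\alpha$ in the admissible window and $m$ as above that makes this possible.
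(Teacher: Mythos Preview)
Your overall architecture---solve the gain-only equation on $[0,T]$ by contraction with a positive time factor $T^\beta$, then run Kaniel--Shinbrot with beginning pair $(0,f^+)$---matches the paper. The gap is in how you handle the loss term and, specifically, in your reading of the exponent $m$.

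In Theorem~\ref{T3}, $m=2N/[(\gamma+N)(5\alpha-1)]$ is \emph{not} the auxiliary exponent appearing in the four-term scaling relation of Proposition~\ref{P:Loss}; that proposition is the \emph{weighted} loss estimate, requires $\ell>N/m$, and is inapplicable here since Theorem~\ref{T3} carries no velocity weight. Rather, $m$ plays exactly the role that $a_2=15/8$ plays in Theorem~\ref{result1}: it is the harmonic mean of a second KT-admissible pair $(p_2,r_2)$, and the hypothesis $\|f_0\|_{L^m_{x,v}}<\infty$ is what allows one to propagate $f^+$ in a second Strichartz space $L^{q_2}_tL^{r_2}_xL^{p_2}_v$ as in Proposition~\ref{P-p2-est}. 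The paper takes
\[
\frac{1}{p_2}=\frac{2\alpha(\gamma+N)}{N},\qquad \frac{1}{r_2}=\frac{(3\alpha-1)(\gamma+N)}{N},\qquad \frac{1}{\tilde p_2'}=\frac{1}{r_2},\qquad \frac{1}{\tilde r_2'}=\frac{1}{p_2},
\]
so that $1/p_2+1/r_2=2/m$ and, crucially, the \emph{unweighted} Lemma~\ref{T:Loss} applies in the velocity variable with one factor in $L^{\msp}_v$ and the other in $L^{p_2}_v$: the asymmetric constraint $1/\mt p<1/\mt r$ becomes $1/\msp<1/\tilde p_2'$, i.e.\ $\alpha<3\alpha-1$, which holds precisely because $\alpha>1/2$. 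Your alternative---propagate $f^+$ in $L^\infty_tL^m_{x,v}$ and H\"older in $v$ between $L^{\msp}_v$ and $L^m_v$---does not land on a usable dual Strichartz exponent (one computes $1/\msp+1/m=(7\alpha-1)(\gamma+N)/(2N)$, which matches neither $1/\tmbp'=(2\alpha-1)(\gamma+N)/N$ nor any consistent $1/\tilde p_2'$), so the iteration would not close as written.
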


\subsection{Organization of the paper} The proof of Theorem~\ref{result1} is lengthy as it contains many parts. We organize the paper as follows. 

In Section 2 we 
prove the  global existence of solutions $f_{+}$ for the gain term only Boltzmann equation with small initial data 
in the suitable Strichartz spaces. In particular, the solution is non-negative if the initial data is non-negative. This part is mainly the 
reminiscence of~\cite{HJ17}. Two useful estimates induced by the condition $\|f_0\|_{L^{15/8}_{x,v}}<\infty$  are also included. 

In Section 3 we use $h_{1}=0$ and $g_{1}=f_{+}\geq 0$ as the lower and upper bounds to build the beginning condition,
 $0\leq h_{1}\leq h_{2}\leq g_{2}\leq g_{1}$, for the Kaniel-Shinbrot iteration. 
 With the aid of $\|  f_0\|_{L^{15/8}_{x,v}}<\infty$, the solution $f_{+}$  of Section 2  ensures the lose term
  $Q^{-}(h_{2},g_{1})$ is well-defined in the sense that it lies in a suitable Strichartz space also.
  The same trick also makes sure that each term in the iteration process is well-defined, thus we can
 run the Kaniel-Shinbro iteration to get the lower and upper solutions $h$ and $g$ of system~\eqref{Iteration-limit}.    
 To close the iteration, we need to show $g=h$. The argument requires again that the assumption that 
 $\| f_0\|_{L^{15/8}_{x,v}}<\infty$.
 
 To check the uniqueness of the solution,
 we  consider the difference of the solutions for 
the full equation and the corresponding difference equation with zero initial data.  
The non-negativity of the solution helps us when using the continuity argument. The continuity in time, scattering of 
the solution and the solution map is Lipschitz continuous can be shown by the standard argument.

In the end of Section 3, we include the proof of Theorem~\ref{T3} by pointing out the main difference with that of 
Theorem~\ref{result1}.

The proof of Theorem~\ref{result2} is included in Section 4 which generalizes the argument of 
Theorem~\ref{result1} after building the weighted estimates for the gain and loss terms as 
well as weighted Strichartz estimates.

\section{The gain term only Boltzmann equation}

\subsection{Global Existence for the gain term only Boltzmann equation}
The main result, Proposition~\ref{Thm-Gain}, is indeed included in~\cite{HJ17}. To be self-contained, we will review the main strategy of the proof which 
is needed for the further analysis.   

First  we  recall the  Strichartz estimates for the kinetic transport equation,
\begin{equation}\label{E:KT}
\left\{
\begin{aligned}
&\partial_t u(t,x,v)+v\cdot\nabla_x u(t,x,v) =F(t,x,v),\;\;(t,x,v)\in (0,\infty)\times\mathbb{R}^N\times\mathbb{R}^N,\\
& u(0,x,v)=u_0(x,v).
\end{aligned}
\right.
\end{equation}
To state the Strichartz estimates for~\eqref{E:KT},  we need the following definition.
\begin{defn}\label{D:admissible}
We say that the exponent triplet $(q,r,p)$, for $1\leq p,q,r\leq\infty$ is KT-admissible if 
\begin{equation}
\frac{1}{q}=\frac{N}{2}{\Big ( \frac{1}{p}-\frac{1}{r} }{\Big )}
\end{equation}
\begin{equation}\label{pr-star-2}
1\leq a\leq\infty,\;\;p^*(a)\leq p\leq a, \;\;a\leq r\leq r^*(a)
\end{equation}
except in the case $N=1,\;(q,r,p)=(a,\infty,a/2)$. Here by $a=$HM$(p,r)$ we have denoted the harmonic 
means of the exponents $r$ and $p$, i.e.,
\begin{equation}
\frac{1}{a}=\frac{1}{2}{\Big ( \frac{1}{p}+\frac{1}{r} }{\Big )}
\end{equation}
Furthermore, the exact lower bound $p^*$ to $p$ and the exact upper bound $r^*$ to $r$ are 
\begin{equation}\label{pr-star-1}
\left\{
\begin{array}{lll}
p^*(a)=\frac{Na}{N+1}, & r^*(a)=\frac{Na}{N-1} & {\rm if}\;\frac{N+1}{N}\leq a\leq\infty, \\
p^*(a)=1, & r^*(a)=\frac{a}{2-a} & {\rm if}\; 1\leq a\leq \frac{N+1}{N}.
\end{array}
\right.
\end{equation}
\end{defn}
The triplets of the form $(q,r,p)=(a,r^*(a),p^*(a))$ for 
$\frac{N+1}{N}\leq a<\infty$ are called endpoints.  
The endpoint Strichartz estimate for the kinetic equation is false in all dimensions has been proved recently by 
Bennett, Bez, Guti\'{e}rrez and Lee~\cite{BBGL14}.

The mild solution of the kinetic equation~\eqref{E:KT} can be written as 
\begin{equation}\label{integal-equation}
u=U(t)u_0+W(t)F
\end{equation}
where 
\begin{equation}\label{D:solution-maps}
U(t)u_0=u_0(x-vt,v)\;,\; W(t)F=\int_0^{t} U(t-s)F(s)ds.
\end{equation}
The estimates for the operator $U(t)$ and $W(t)$ respectively in the mixed Lebesgue norm 
$\|\cdot\|_{L^q_tL^r_xL^p_v}$ are called homogeneous and inhomogeneous Strichartz 
estimates respectively. We record the estimates  for the equation~\eqref{integal-equation} 
in the following Proposition where  $p'$ denotes the conjugate exponent of $p$ and so on.
\begin{prop}[\cite{Ovc11},\cite{BBGL14}]\label{Strichartz-est} 
Let $u$ satisfies~\eqref{E:KT}.  The estimate
\begin{equation}\label{E:Strichartz}
\|u\|_{L^q_tL^r_xL^p_v}\leq C(q,r,p,N)( \|u_0\|_{L^a_{x,v}} +\|F\|_{L^{\tilde{q}'}_tL^{\tilde{r}'}_xL^{\tilde{p}'}_v}  )
\end{equation}
holds for all $u_0\in L^a_{t,x}$ and all $F\in {L^{\tilde{q}'}_tL^{\tilde{r}'}_xL^{\tilde{p}'}_v} $ if and only if 
$(q,r,p)$ and $(\tilde{q},\tilde{r},\tilde{p})$ are two KT-admissible exponets triplets and $a=$HM$(p,r)=$HM$(\tilde{p}',\tilde{r}')$
with the exception of $(q,r,p)$ being an endpoint triplet.
\end{prop}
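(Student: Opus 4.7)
The plan is to establish the estimates \eqref{E:Strichartz} via the standard two-stage approach: a dispersive decay estimate for the free kinetic transport, followed by the Keel--Tao $TT^*$ machinery adapted to mixed Lebesgue norms.

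First, I would obtain the dispersive estimate. At $(p,r)=(1,\infty)$, the substitution $y = x - vt$ (so $v = (x-y)/t$ and $|dv| = |t|^{-N}|dy|$) gives
\[
\sup_{x}\int |u_0(x-vt,v)|\,dv \leq |t|^{-N}\int \sup_{z}|u_0(y,z)|\,dy = |t|^{-N}\|u_0\|_{L^1_x L^\infty_v}.
\]
Combined with the isometry $\|U(t)u_0\|_{L^p_{x,v}} = \|u_0\|_{L^p_{x,v}}$ (valid for every $p$, since $x \mapsto x - vt$ is volume-preserving at each fixed $v$), bilinear complex interpolation in the mixed-norm spaces $L^{p}_x L^{q}_v$ yields the full family
\[
\|U(t)u_0\|_{L^r_x L^p_v} \leq C|t|^{-N(1/p - 1/r)}\|u_0\|_{L^p_x L^r_v}, \qquad 1 \leq p \leq r \leq \infty.
\]
The harmonic-mean condition $a = \mathrm{HM}(p,r)$ in \eqref{E:Strichartz} emerges from the symmetric placement of $(p,r)$ and $(r',p')$ under the time-reversal duality $U(t)^* = U(-t)$.

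For non-endpoint KT-admissible triples I would next run the Keel--Tao $TT^*$ argument: writing $TT^*F(t) = \int U(t-s)F(s)\,ds$, the dispersive bound provides pointwise decay $|t-s|^{-N(1/p - 1/r)}$, and Hardy--Littlewood--Sobolev in time forces the scaling $1/q = (N/2)(1/p - 1/r)$. The inhomogeneous bound with two distinct admissible triples $(q,r,p)$ and $(\tilde q,\tilde r,\tilde p)$ follows by first proving the non-retarded version and then extracting the retarded kernel via the Christ--Kiselev lemma, which requires $q > \tilde q'$ and therefore degenerates at the double-endpoint. The precise ranges \eqref{pr-star-2}--\eqref{pr-star-1} of $p$ and $r$ can then be read off from the interpolation segment joining the trivial $L^a_{x,v}$-isometry to the dispersive endpoint.

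The main obstacle is not proving the estimate but rather carving out the single endpoint triple $(q,r,p)=(a,r^*(a),p^*(a))$. As recalled in the excerpt, Bennett, Bez, Guti\'errez and Lee \cite{BBGL14} disprove this endpoint by constructing concentration sequences that saturate the $TT^*$ scaling heuristic while violating the expected integrability in time; any clean proof of Proposition~\ref{Strichartz-est} must be compatible with this obstruction. Necessity of the other admissibility constraints is comparatively soft: a scaling argument applied to rescaled data $u_0(\lambda x, \mu v)$ pins down both $1/q$ and $1/a$, while phase-space concentration in either $x$ or $v$ forces the boundary restrictions on $p$ and $r$ given in \eqref{pr-star-2}--\eqref{pr-star-1}.
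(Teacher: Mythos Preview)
The paper does not give its own proof of Proposition~\ref{Strichartz-est}; it is quoted verbatim as a known result from Ovcharov~\cite{Ovc11} (sufficiency in the non-endpoint range) and Bennett--Bez--Guti\'errez--Lee~\cite{BBGL14} (failure at the endpoint), and the text moves on immediately to the gain-term Cauchy problem. Your sketch---dispersive decay from the explicit change of variables, mixed-norm interpolation, Keel--Tao $TT^*$ plus Christ--Kiselev for the inhomogeneous retarded estimate, scaling for necessity, and the \cite{BBGL14} counterexample for the excluded endpoint---is the standard route to this result and is essentially the content of the cited references, so there is nothing in the paper to compare it against beyond the citations themselves.
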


Now we consider the Cauchy problem for the gain term only Boltzmann equation 
\begin{equation}\label{G-Cauchy}
\left\{
\begin{aligned}
&{\partial_t f_+}+v\cdot\nabla_x f_+ =Q^{+}(f_+,f_+)\\
&f_+(0,x,v)=f_0(x,v).
\end{aligned}
\right.
\end{equation}
We define the solution map by
\begin{equation}\label{G-integral-equation}
Sf_+(t,x,v)=U(t)f_0+W(t)Q^{+}(f_+,f_+).
\end{equation}
From~\eqref{G-integral-equation}  and Proposition~\ref{Strichartz-est}, we will see that it holds 
the estimates
\begin{equation}\label{contraction}
\begin{split}
\|Sf_+\|_{L^q_tL^r_xL^p_v}& \leq C_{0}\|f_{0}\|_{L^a_{x,v}}+C_{1}\|Q^+(f_+,f_+)\|^{2}
_{L^{\tilde{q}'}_tL^{\tilde{r}'}_xL^{\tilde{p}'}_v} \\
&  \leq C_{0}\|f_{0}\|_{L^a_{x,v}}+C_{2}\|f_+\|^2_{L^q_tL^r_xL^p_v},
\end{split}
\end{equation}
for suitable Strichartz spaces $L^q_tL^r_xL^p_v$ and $L^{\tilde{q}'}_tL^{\tilde{r}'}_xL^{\tilde{p}'}_v$.
Then the contraction mapping argument will work if the initial data is small in space $L^a_{x,v}$.
The  key lies in the fact that if there exist admissible triplets $(q,r,p)$ and $(\tilde{q}', \tilde{r}',\tilde{p}')$
with $HM(p,r)=HM(\tilde{p}',\tilde{r}')$ such that  the estimate
\begin{equation}\label{desire-est}
\|Q^{+}(f_+,f_+)\|_{L^{\tilde{q}'}_tL^{\tilde{r}'}_xL^{\tilde{p}'}_v} \leq C\|f_+\|^{2}_{L^q_tL^r_xL^p_v}
\end{equation}
holds.

In order to prove the existence of such triplets, we need the estimates for the gain term in $v$ variable. Indeed it is included in 
Theorem 1 and Theorem 2 of~\cite{ACG10} by  Alonso, Carneiro and Gamba.  We collect what we need as follows. 
\begin{prop}[\cite{ACG10}]\label{P:Convolution}
 Let $1<\mt{p}, \mt{q}, \mt{r} <\infty$ with $-N<\gamma\leq 0$ and 
 \begin{equation}\label{scaling-relation-0}
 1/\mt{p}+1/\mt{q}=1+\gamma/N+1/\mt{r}.
 \end{equation}
 Assume the kernel~\eqref{D:kernel}{\rm:} 
\[
B(v-v_*,\omega)=|v-v_*|^{\gamma}b(\cos\theta)
\]
with $b(\cos\theta)$ satisfies Grad's cutoff assumption~\eqref{D:Grad}.
Then the bilinear operator $Q^{+}(f,g)$ is a bounded operator from $L^{\mt{p}}(\mathbb{R}^N)\times 
L^{\mt{q}}(\mathbb{R}^N)\rightarrow L^{\mt{r}}(\mathbb{R}^N)$ via the estimate 
\begin{equation}\label{G-convolution-est}
\|Q^{+}(f,g)\|_{L^{\mt{r}}_{v}(\mathbb{R}^N)}\leq C\|f\|_{L^{\mt{p}}_{v}(\mathbb{R}^N)}\|g\|_{L^{\mt{q}}_{v}(\mathbb{R}^N)}.
\end{equation}
\end{prop}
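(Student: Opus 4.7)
My plan is to follow the Alonso--Carneiro--Gamba strategy, which naturally splits into a Maxwell-molecule base case and a soft-potential extension. First I would reformulate the inequality in trilinear form via duality: setting
\[
T(f,g,h)=\int_{\mathbb{R}^N}Q^+(f,g)(v)\,h(v)\,dv
=\int_{\mathbb{R}^{2N}\times S^{N-1}}f(v')g(v'_*)h(v)\,|v-v_*|^\gamma b(\cos\theta)\,d\omega\,dv_*\,dv,
\]
the goal becomes showing $|T(f,g,h)|\le C\|f\|_{L^{\mt{p}}}\|g\|_{L^{\mt{q}}}\|h\|_{L^{\mt{r}'}}$ under the scaling $1/\mt{p}+1/\mt{q}+1/\mt{r}'=2+\gamma/N$. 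At this stage I would exploit the pre-post-collisional involutions, which have unit Jacobian, to permute $f,g,h$ among the three velocity arguments; this makes it transparent that the estimate is symmetric in the pair $(f,g)$ and gives flexibility in how we distribute integrability.

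Next, I would treat the Maxwell case $\gamma=0$, where the scaling reduces to Young's convolution relation $1/\mt{p}+1/\mt{q}=1+1/\mt{r}$. The cleanest route is to integrate out the angular variable first, which, after using Bobylev's identity on the Fourier side, turns $Q^+$ into a bilinear operator whose symbol factorizes into $\hat{f}(\xi^+)\hat{g}(\xi^-)$ with $|\xi^\pm|^2\le|\xi|^2$; from this one reads off $L^1$--$L^1\to L^1$ and an $L^p$--$L^{p'}\to L^\infty$-type endpoint, and interpolation gives the desired Young inequality. Alternatively, the direct Carleman representation writes $Q^+$ as a superposition of Radon-transform-type convolutions along lower-dimensional slices, and a fibered Young's inequality closes the Maxwell estimate.

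Finally, to extend to the full range $-N<\gamma<0$ I would use the fact that $|v-v_*|^\gamma\in L^{N/|\gamma|,\infty}(\mathbb{R}^N)$. Splitting $|v-v_*|^\gamma=K_0+K_\infty$ according to whether $|v-v_*|\le 1$ or $>1$, one has $K_0\in L^s$ for every $s<N/|\gamma|$ and $K_\infty$ bounded and in every $L^s$ with $s>N/|\gamma|$; each piece can be combined with the Maxwell-case bilinear estimate via Hölder, O'Neil's weak-type Young inequality, or the Hardy--Littlewood--Sobolev inequality, and the scaling relation $1/\mt{p}+1/\mt{q}=1+\gamma/N+1/\mt{r}$ is exactly what matches the exponents in each piece. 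The main obstacle is the Maxwell base case: the Young-type estimate for $Q^+$ is sharp and genuinely uses the angular smoothing coming from $b(\cos\theta)$ under Grad's cutoff, and obtaining the full open range of $(\mt{p},\mt{q},\mt{r})$ rather than just isolated exponents requires the interpolation structure above; by contrast, the passage from $\gamma=0$ to $-N<\gamma<0$ is a comparatively routine weighted-convolution argument.
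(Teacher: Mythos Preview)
The paper does not give its own proof of this proposition; it simply quotes the result from \cite{ACG10}. Your outline is essentially a sketch of the Alonso--Carneiro--Gamba argument, so you are aligned with what the paper invokes. The paper does revisit the ACG machinery later, in the proof of Proposition~\ref{P:Convolution-w}, where the key object is the bilinear operator $\mathcal{P}$ and the two dual representations~\eqref{dual-rep-P}; that is the representation ACG actually use, and it is closer to what you call the ``alternative'' Carleman-type route than to the Bobylev Fourier route.

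One caution on your primary route for the Maxwell case: the Bobylev identity does give the factorized symbol $\hat f(\xi^+)\hat g(\xi^-)$, but it is not true that from this one simply ``reads off'' the $L^{\mt p}\times L^{\mt p'}\to L^\infty$ endpoint and interpolates to the full Young range. The Fourier side controls $L^2$-based exponents cleanly, but getting the whole open triangle $1/\mt p+1/\mt q=1+1/\mt r$ requires the extra structure that ACG extract via the $\mathcal{P}$-representation together with a radial symmetrization lemma (their Lemma~3). Your alternative description in terms of fibered Radon-type convolutions is the one that actually closes; I would promote it from ``alternative'' to primary. The extension to $-N<\gamma<0$ via the weak-$L^{N/|\gamma|}$ nature of $|u|^\gamma$ and Hardy--Littlewood--Sobolev is exactly what ACG do and matches the paper's later discussion.
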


As we mention in the introduction that the main result of~\cite{HJ17}  actually holds for gain term only Boltzmann 
equation instead of full equation due to the negligence of constrain in the estimate for the loss term. 
More precisely, the proof of ~\cite{HJ17} infers the following result. 
\begin{prop}[cf. Theorem 1.1 in~\cite{HJ17}]\label{Thm-Gain} 
Let $N=2$ or $3$ and $B$ defined in~\eqref{D:kernel} satisfies~\eqref{D:Grad} and $\gamma=2-N$. 
The Cauchy problem~\eqref{G-Cauchy} is 
globally wellposed in $L^N_{x,v}$ when the initial data is small enough. 
More specially, there exists $\eta>0$ small enough  such that for all $f_0$ in the set
\[ B_{\eta}=\{f_0\in L^N_{x,v} (\mathbb{R}^N\times
\mathbb{R}^N): f_0\ge0\;{\rm and}\; \|f_0\|_{L^N_{x,v}}<\eta\}
\] 
there exists a globally unique  mild solution 
\[
f_+\in C([0,\infty),L^N_{x,v})\cap
L^{\mbq}([0,\infty],L^{\mbr}_xL^{\mbp}_v) 
\]
where the triple $(\mbq,\mbr,\mbp)$ lies in the set
\begin{equation}\label{solvable-g}
\{ (\mbq,\mbr,\mbp) | \;\frac{1}{\mbq}=\frac{N}{\mbp}-1\;,\;\frac{1}{\mbr}=\frac{2}{N}-\frac{1}{\mbp}\;,\;
\frac{1}{N}<\frac{1}{\mbp}<\frac{N+1}{N^2}\}. 
\end{equation} 
The solution map $f_0\in B_{\eta}\subset L^N_{x,v}\rightarrow f_+\in L^{\mbq}_tL^{\mbr}_xL^{\mbp}_v$ 
is Lipschitz continuous and the solution $f_+$ scatters with respect to the kinetic 
transport operator in $L^N_{x,v}$.  
\end{prop}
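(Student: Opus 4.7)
The plan is to set up a contraction mapping for the integral operator $Sf_+ = U(t)f_0 + W(t)Q^+(f_+,f_+)$ inside a closed ball of the mixed space $X := L^{\mbq}_t L^{\mbr}_x L^{\mbp}_v$, with the triple $(\mbq,\mbr,\mbp)$ drawn from the set~\eqref{solvable-g}. The homogeneous Strichartz estimate of Proposition~\ref{Strichartz-est} gives $\|U(t)f_0\|_X \leq C_0 \|f_0\|_{L^N_{x,v}}$ provided $(\mbq,\mbr,\mbp)$ is KT-admissible with harmonic mean $a=N$. For the Duhamel term I seek a second KT-admissible triple $(\tmbq,\tmbr,\tmbp)$ with $\mathrm{HM}(\tmbp',\tmbr')=N$ such that the bilinear estimate~\eqref{desire-est} holds. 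Proposition~\ref{P:Convolution} supplies the $v$-variable estimate provided $\tfrac{2}{\mbp} = 1 + \tfrac{\gamma}{N} + \tfrac{1}{\tmbp'}$; Hölder in $(t,x)$ then forces $\tfrac{1}{\tmbr'} = \tfrac{2}{\mbr}$ and $\tfrac{1}{\tmbq'} = \tfrac{2}{\mbq}$. Combined with the inhomogeneous Strichartz estimate this produces~\eqref{contraction}, and for $\|f_0\|_{L^N_{x,v}}<\eta$ small the map $S$ preserves and contracts a small ball of $X$, yielding a unique fixed point $f_+\in X$.

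The next step is to verify that suitable exponents actually exist and that this pins down $\gamma = 2-N$. Substituting the bilinear scaling and the two Hölder identities into the two harmonic-mean conditions and the two KT-admissibility equations reduces, after elimination, to the algebraic identity $\tfrac{1}{2} = \tfrac{2-\gamma}{2N}$, i.e., $\gamma = 2-N$. Under this constraint, the five relations collapse to the one-parameter family~\eqref{solvable-g}, and the range $\tfrac{1}{N} < \tfrac{1}{\mbp} < \tfrac{N+1}{N^2}$ emerges from the admissibility range restrictions~\eqref{pr-star-2}--\eqref{pr-star-1} applied to both triples (staying strictly away from the forbidden endpoints). This is the rigidity that singles out Maxwell molecules in $N=2$ and $\gamma=-1$ in $N=3$ as the cases handled by pure Strichartz arguments with $L^N_{x,v}$ as the scattering space.

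Once $f_+\in X$ is produced, persistence $f_+\in C([0,\infty);L^N_{x,v})$ follows from the integral equation together with the homogeneous bound at the trivial admissible triple (with $p=r=N$, $q=\infty$) controlling the Duhamel term via the already-established a priori $X$-control of the nonlinearity. Non-negativity is inherited from the iteration with $f^{(0)} = U(t)f_0\geq 0$, since $Q^+$ preserves positivity and the contraction's fixed point is the $X$-limit of these iterates. Lipschitz continuity of the solution map $f_0\mapsto f_+$ follows by applying the same bilinear estimate to the splitting $Q^+(f,f)-Q^+(g,g) = Q^+(f-g,f)+Q^+(g,f-g)$. For scattering, write $U(-t)f_+(t) - f_0 = \int_0^t U(-s)Q^+(f_+,f_+)(s)\,ds$; the inhomogeneous Strichartz estimate applied to the tail $\int_{t_1}^{t_2}$ together with $\|Q^+(f_+,f_+)\|_{L^{\tmbq'}_t L^{\tmbr'}_x L^{\tmbp'}_v} \leq C\|f_+\|_X^2 < \infty$ shows Cauchy behavior in $L^N_{x,v}$, so the limit $f_\infty := \lim_{t\to\infty} U(-t)f_+(t)$ exists in $L^N_{x,v}$.

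The main obstacle is the exponent bookkeeping: one must simultaneously satisfy KT-admissibility on two triples, the harmonic-mean matching with $a=N$, the Alonso--Carneiro--Gamba scaling relation, and the two Hölder compatibility identities in $(t,x)$. These constraints are tight, and the fact that they have a nonempty open solution set is exactly what forces $\gamma = 2-N$; the verification of the open range $\tfrac{1}{N} < \tfrac{1}{\mbp} < \tfrac{N+1}{N^2}$ avoiding all forbidden endpoints (in particular the known false endpoint Strichartz estimate of Bennett--Bez--Guti\'errez--Lee) is the place where the argument can genuinely fail if the exponents are pushed, and is the only nontrivial step beyond routine Strichartz/contraction machinery.
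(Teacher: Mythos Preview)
Your proposal is correct and follows essentially the same route as the paper: a contraction for $S$ in $L^{\mbq}_tL^{\mbr}_xL^{\mbp}_v$ built from the Strichartz estimates of Proposition~\ref{Strichartz-est} and the gain-term convolution estimate of Proposition~\ref{P:Convolution}, with the exponent arithmetic (your $\tfrac{2}{\mbp}=1+\tfrac{\gamma}{N}+\tfrac{1}{\tmbp'}$, $\tmbr'=\mbr/2$, $\tmbq'=\mbq/2$, the two KT-admissibility relations and the harmonic-mean matching) forcing $\gamma=2-N$, $a=N$ and the one-parameter family~\eqref{solvable-g}. The paper's treatment of non-negativity (Corollary~\ref{pos-gain}) expands the iterated Duhamel series as a sum of non-negative terms whose Strichartz norm is bounded by a convergent geometric series in $\|f_0\|_{L^N_{x,v}}$, which is a minor variant of your Picard-iterate argument; otherwise the continuity, Lipschitz, and scattering arguments are the same.
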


\begin{defn}\label{solvable-tri}
We use the notation $(\mbq,\mbr,\mbp)$  to address that it stems from the usual KT-admissible triplet $(q,r,p)$  and 
lies in the set~\eqref{solvable-g} (i.e., ~\eqref{solvable}). We should call that $(\mbq,\mbr,\mbp)$ is a solvable triplet.
We say that $(\tmbq',\tmbr',\tmbp')$ is the conjugate triple of the solvable triplet $(\mbq,\mbr,\mbp)$ if 
$HM(\mbp,\mbr)=HM(\tmbp',\tmbr')$ and 
\begin{equation}\label{desire-est-sol}
\|Q^{+}(f,f)\|_{L^{\tmbq'}_tL^{\tmbr'}_xL^{\tmbp'}_v} \leq C\|f\|^{2}_{L^{\mbq}_tL^{\mbr}_xL^{\mbp}_v}.
\end{equation}
\end{defn}

Before we consider the full Boltzmann equation, we also need the following result. 
\begin{cor}\label{pos-gain} 
Under the same conditions as Proposition~\ref{Thm-Gain}, if we furthermore  assume 
$f_0\geq 0$, then the solution $f_{+}$ is also non-negative. 
\end{cor}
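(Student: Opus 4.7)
The plan is to revisit the contraction-mapping construction that produces $f_{+}$ in Proposition~\ref{Thm-Gain} and observe that each approximation stays non-negative. Concretely, I would define the Picard sequence
\begin{equation*}
f^{(0)}=U(t)f_{0},\qquad f^{(n+1)}=U(t)f_{0}+W(t)Q^{+}(f^{(n)},f^{(n)}),\quad n\geq 0.
\end{equation*}
Since $Sf=U(t)f_{0}+W(t)Q^{+}(f,f)$ is a contraction on a small ball of $L^{\mbq}_{t}L^{\mbr}_{x}L^{\mbp}_{v}$ when $\|f_{0}\|_{L^{N}_{x,v}}<\eta$, the sequence $f^{(n)}$ converges to the unique fixed point $f_{+}$ in $L^{\mbq}_{t}L^{\mbr}_{x}L^{\mbp}_{v}$ (and, by the Strichartz estimate of Proposition~\ref{Strichartz-est}, in $C([0,\infty),L^{N}_{x,v})$ as well).

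The heart of the argument is an induction showing $f^{(n)}\geq 0$ a.e.\ for every $n$. Three positivity facts suffice: (i) the transport flow $U(t)g(x,v)=g(x-vt,v)$ obviously preserves the sign, so $f^{(0)}\geq 0$ because $f_{0}\geq 0$; (ii) the gain operator $Q^{+}(f,g)(v)=\iint f(v')g(v_{*}') B(v-v_{*},\omega)\,d\omega dv_{*}$ is a bilinear operator whose integrand is non-negative when $f,g\geq 0$ (the kernel $|v-v_{*}|^{\gamma}b(\cos\theta)$ is non-negative by~\eqref{D:kernel}--\eqref{D:Grad}); (iii) the Duhamel operator $W(t)F=\int_{0}^{t}U(t-s)F(s)\,ds$ is an integral of a non-negative integrand when $F\geq 0$, so $W(t)F\geq 0$. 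Putting (i)--(iii) together, if $f^{(n)}\geq 0$ then $Q^{+}(f^{(n)},f^{(n)})\geq 0$, hence $W(t)Q^{+}(f^{(n)},f^{(n)})\geq 0$, and therefore $f^{(n+1)}\geq 0$.

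To conclude, I would pass to the limit. Since $f^{(n)}\to f_{+}$ in $L^{\mbq}_{t}L^{\mbr}_{x}L^{\mbp}_{v}$, a standard subsequence argument yields a further subsequence converging to $f_{+}$ almost everywhere in $(t,x,v)$; the pointwise bound $f^{(n)}\geq 0$ is preserved in the limit, so $f_{+}\geq 0$ a.e. I do not anticipate a real obstacle here: the only point requiring care is that the kernel, the transport flow and the Duhamel integral all act as positive operators, which is immediate from their definitions, together with the routine extraction of an a.e.\ convergent subsequence from Strichartz-norm convergence. No additional assumption beyond $f_{0}\geq 0$ and the hypotheses already in Proposition~\ref{Thm-Gain} is needed.
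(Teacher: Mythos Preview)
Your proof is correct, but it follows a slightly different route from the paper's. The paper iterates the Duhamel formula itself to expand $f_{+}$ as an infinite series
\[
f_{+}(t)=U(t)f_{0}+\int_{0}^{t}U(t-t_{1})Q^{+}\bigl(U(t_{1})f_{0},U(t_{1})f_{0}\bigr)\,dt_{1}+\cdots,
\]
observes that every summand is non-negative, and then argues that the series converges in the Strichartz norm because each term is controlled by a power of $\|f_{0}\|_{L^{N}_{x,v}}<\eta$. You instead run the Picard iteration $f^{(n+1)}=Sf^{(n)}$ associated with the contraction $S$, prove $f^{(n)}\geq 0$ by induction, and pass to the limit via an a.e.\ convergent subsequence. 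Your approach is arguably more economical: the convergence $f^{(n)}\to f_{+}$ is already part of the contraction-mapping construction in Proposition~\ref{Thm-Gain}, so no separate series-convergence argument or identification of the series with the fixed point is needed. The paper's series expansion, on the other hand, makes the positivity completely explicit term by term and yields a representation that can be useful in its own right. Both arguments rest on the same three positivity facts you list.
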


For the proof of Corollary~\ref{pos-gain} and later analysis, we include  the portion of 
the proof of Proposition~\ref{Thm-Gain} which shows that if the admissible triplets  $(q,r,p)$ lie 
in~\eqref{solvable-g}, there exist corresponding $(\tilde{q}', \tilde{r}',\tilde{p}')$ such 
that~\eqref{desire-est} holds, i.e., we can find solvable triple $(\mbq,\mbr,\mbp)$ and
 its conjugate triplet $(\tmbq,\tmbr,\tmbp)$.

\begin{proof}[Proof of~\eqref{desire-est-sol}]
For $v$ variable, we let $\mt{r}=\tilde{p}',\; \mt{p}=\mt{q}=p$ in~\eqref{G-convolution-est}, thus 
\begin{equation}\label{Condition:v}
\frac{2}{p}=1+\frac{\gamma}{N}+\frac{1}{\tilde{p}'}.
\end{equation}
 For $x$ variables, the condition for being able to apply the H\"{o}lder inequality is
\begin{equation}\label{Condition:x}
2\tilde{r}'=r, \; r\geq 2.
\end{equation}
Furthermore the Strichartz inequality demands the relation of pairs $(p,r),(\tilde{p}',\tilde{r}')$, 
\begin{equation}\label{Condition:har}
\frac{1}{p}+\frac{1}{r}=\frac{1}{\tilde{p}'}+\frac{1}{\tilde{r}'}.
\end{equation}
To apply the H\"{o}lder inequality to $t$ variable, we need
\begin{equation}\label{E:1t-variable}
\frac{2}{q}=\frac{1}{\tilde{q}'}<1,
\end{equation}
that is 
\begin{equation}\label{Condition:14}
\frac{2}{q}+\frac{1}{\tilde{q}}=1\;,\; \frac{1}{q}<\frac{1}{2}.
\end{equation}
Finally the KT-admissible conditions
\begin{align}
&\frac{1}{q}=\frac{N}{2}(\frac{1}{p}-\frac{1}{r})>0,\label{Condition:15}\\
&\frac{1}{\tilde{q}}=\frac{N}{2}(\frac{1}{\tilde{p}}-\frac{1}{\tilde{r}})>0
\label{Condition:16}
\end{align}
must be fulfilled.

We note that once $\gamma,p,r$ are given, $q,\tilde{p},\tilde{r},\tilde{q}$ 
are determined. Thus we rewrite above conditions as      
\begin{subequations}
\begin{align}
&\frac{1}{p}+\frac{1}{r}=1+\frac{\gamma}{N} \qquad {\rm from}\;~\eqref{Condition:v}
\;{\rm and\;}~\eqref{Condition:x},~\eqref{Condition:har} \label{Res:11} \\
&\frac{1}{p}+\frac{1}{r}=\frac{2}{N} \;\;\qquad\quad {\rm from }\;~\eqref{Condition:14}\;
{\rm and}~\eqref{Condition:15},~\eqref{Condition:har} \label{Res:12}\\
&0<\frac{1}{p}-\frac{1}{r}<\frac{1}{N} \;\;\;\quad {\rm from}\;1/q<1/2\;\;{\rm in}\;~\eqref{Condition:14}\;{\rm and}\;~\eqref{Condition:15}  \label{Res:13}\\
&0<\frac{1}{p}-\frac{1}{r}< \frac{1}{2}(1+\frac{\gamma}{N})
\;\quad {\rm from}\;~\eqref{Condition:16}\;{\rm and}\;~\eqref{Condition:v} \label{Res:14}
\end{align}
\end{subequations}

Therefore
\[
\gamma=2-N\;,\;a=N.
\]
and  by~\eqref{pr-star-2},~\eqref{pr-star-1} and~\eqref{Condition:15}, 
\[
\frac{1}{N}<\frac{1}{p}<\frac{N+1}{N^2}\;,\;\frac{N-1}{N^2}<\frac{1}{r}<\frac{1}{N}.
\]
Thus we conclude that if the triplet $(q,r,p)$ satisfies~\eqref{solvable-g}, i.e.,  
\[
\{ (\mbq,\mbr,\mbp) | \;\frac{1}{\mbq}=\frac{N}{\mbp}-1\;,\;\frac{1}{\mbr}=\frac{2}{N}-\frac{1}{\mbp}\;,\;
\frac{1}{N}<\frac{1}{\mbp}<\frac{N+1}{N^2}\},
\]
then~\eqref{desire-est-sol} holds where the triplet $(\tmbq',\tmbr', \tmbp')$ is given by
~\eqref{Condition:v},~\eqref{Condition:x} ,~\eqref{Condition:har} and~\eqref{E:1t-variable}.   

\end{proof}

\begin{proof}[Proof of Corollary~\ref{pos-gain}]
When $f_{0}\geq 0$, we can see that the solution is non-negative by iterating Duhamel's formula: 
\begin{equation}\label{positive}
\begin{split}
f_+(t)&=U(t)f_{0}+\int_{0}^{t}U(t-t_{1})Q^{+}(U(t_{1})f_{0},U(t_{1})f_{0}) dt_{1} \\
&+\int_{0}^{t}\int_{0}^{t_{1}}U(t-t_{1})Q^{+}{\big (}U(t_{1}-t_{2})Q^{+}(U(t_{2})f_{0},U(t_{2})f_{0}
{\big )}, U(t_{1})f_{0})dt_{2}dt_{1}+\cdots
\end{split}
\end{equation}
Since each term in the right hand side is non-negative, it suffices to show the series converges. 
Using Strichartz estimates of Proposition~\ref{Strichartz-est} with solvable triplet repeatedly, 
we get that the Strichartz norm of $f_+$ for 
solvable triplets is bounded by a series of  $L^N_{x,v}$ norm of $f_0$ which converges since initial data is 
small enough. 
\end{proof}

Now we explain the reason why the above approach cannot solve the full Boltzmann equation. First we record the 
estimate for the loss term whose proof can be obtained by dropping $\ell$ and $m$ and modifying
 the proof of Proposition~\ref{P:Loss}.

\begin{lem}\label{T:Loss}
Let $1<\mt{p}, \mt{q}, \mt{r} <\infty$ with $-N<\gamma< 0$, $1/\mt{p}+1/\mt{q}=1+\gamma/N+1/\mt{r}$ and 
$1/\mt{p}<1/\mt{r}$. Assume the kernel~\eqref{D:kernel}{\rm:} 
\[
B(v-v_*,\omega)=|v-v_*|^{\gamma}b(\cos\theta)
\]
with $b(\cos\theta)$ satisfies  Grad's cutoff assumptation~\eqref{D:Grad}. Then the  bilinear operator $Q^{-}$ is a
bounded operator from $L^{\mt{p}}(\mathbb{R}^N)\times L^{\mt{q}}(\mathbb{R}^N)\rightarrow 
L^{\mt{r}}(\mathbb{R}^N)$ via the estimate 
\[
\|Q^{-}(f,g)\|_{L^{\mt{r}}(\mathbb{R}^N)}\leq C\|f\|_{L^{\mt{p}}(\mathbb{R}^N)}\|g\|_{L^{\mt{q}}(\mathbb{R}^N)}.
\]
\end{lem}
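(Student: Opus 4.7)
The plan is to exploit the fact that, in contrast to the gain term, the velocity $v$ appears \emph{untouched} inside $f(v)$ in the loss integrand. Since $b$ depends only on the angle, the angular variable can be integrated out using Grad's cutoff~\eqref{D:Grad}, collapsing $Q^{-}$ into a pointwise product times a convolution:
\[
Q^{-}(f,g)(v) \;=\; \|b\|_{L^{1}(S^{N-1})}\, f(v)\,\bigl(g*|\cdot|^{\gamma}\bigr)(v).
\]
The cutoff hypothesis is exactly what makes $\|b\|_{L^{1}(S^{N-1})}<\infty$ so that this reduction is legitimate. This rigid factorization, together with the need to peel off $f(v)$ by H\"older, is the structural source of the non-symmetric exponent constraint.

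From here I would apply H\"older's inequality in $v$ to separate the two factors,
\[
\|Q^{-}(f,g)\|_{L^{\mt{r}}_{v}} \;\leq\; \|b\|_{L^{1}(S^{N-1})}\,\|f\|_{L^{\mt{p}}_{v}}\,\bigl\|\,g*|\cdot|^{\gamma}\,\bigr\|_{L^{\mt{s}}_{v}},
\]
where $1/\mt{s} = 1/\mt{r} - 1/\mt{p}$. The assumption $1/\mt{p} < 1/\mt{r}$ is exactly what guarantees $\mt{s}\in(1,\infty)$ so that H\"older is applicable with a nontrivial exponent; this constraint simply has no counterpart in the symmetric estimate for $Q^{+}$ (Proposition~\ref{P:Convolution}). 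Next, since $-N<\gamma<0$, the Riesz-type kernel $|v|^{\gamma}$ belongs to the weak Lebesgue space $L^{N/|\gamma|,\infty}(\R^{N})$, and the Hardy--Littlewood--Sobolev (or weak Young) inequality yields
\[
\bigl\|\,g*|\cdot|^{\gamma}\,\bigr\|_{L^{\mt{s}}_{v}} \;\leq\; C\,\|g\|_{L^{\mt{q}}_{v}},
\]
under the scaling $1/\mt{q} + |\gamma|/N = 1 + 1/\mt{s}$. Combining this with $1/\mt{s} = 1/\mt{r} - 1/\mt{p}$ reproduces exactly the scaling relation $1/\mt{p} + 1/\mt{q} = 1 + \gamma/N + 1/\mt{r}$ posited in the hypothesis.

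The only housekeeping is to verify that all the HLS endpoints are strictly avoided, i.e.\ that $1<\mt{q},\mt{s}<\infty$. Since $\mt{r}>1$ we have $1/\mt{s}=1/\mt{r}-1/\mt{p}\in(0,1)$, while $1<\mt{q}<\infty$ is already assumed, and $|\gamma|/N<1$ puts the weak-type exponent $N/|\gamma|$ strictly in $(1,\infty)$. There is no serious obstacle in the proof itself: once the angular integration is performed, it is a one-line application of H\"older followed by HLS. The real content of the lemma is the highlighted asymmetric constraint $1/\mt{p} < 1/\mt{r}$, which persists under weighted generalizations and is precisely what obstructed the direct Strichartz approach in~\cite{HJ17}.
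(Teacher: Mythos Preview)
Your proof is correct and is essentially the same as the paper's: after integrating out the angular variable via Grad's cutoff, the paper (see the proof of Proposition~\ref{P:Loss}, which Lemma~\ref{T:Loss} specializes by dropping $\ell$ and $m$) pairs $Q^{-}(f,g)$ with a test function $h$, applies the bilinear Hardy--Littlewood--Sobolev inequality to $\iint f(v)h(v)\,|v-v_*|^{\gamma}\,g(v_*)\,dv_*dv$ grouping $fh$ against $g$, and then splits $f$ and $h$ by H\"older. Your version simply reverses the order---H\"older first on the pointwise product, then HLS in convolution form on $g*|\cdot|^{\gamma}$---so the two arguments are dual reorderings of the same two ingredients, and both produce the constraint $1/\mt{p}<1/\mt{r}$ from the same source.
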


Please note the difference between the estimates for gain and loss terms, Proposition~\ref{P:Convolution} 
and Lemma~\ref{T:Loss}, the latter does not include the case $\gamma=0$ and it needs additional constraint
$1/\mt{p}<1/\mt{r}$. It is the main reason for equation~\eqref{desire-est-sol} does 
not hold for $Q^{-}$. In the proof of~\eqref{desire-est-sol}, 
we use~\eqref{G-convolution-est} by letting $\mt{r}=\tmbp'$ and $\mt{p}=\mt{q}=\mbp$.  The computation there also
shows that we need to take $\gamma=2-N$ and $1/N<1/\mbp<(N+1)/N^2$. However the range of $\mbp$ and equality 
$2/\mbp=1+(2-N)/N+1/\tmbp'$ exclude the possibility of $1/\mbp<1/\tmbp'$, i.e., $1/\mt{p}<1/\mt{r}$.

To end this subsection, we present two results which are useful in closing the Kaniel-Shinbrot  iteration.  
\begin{prop}\label{P-p2-est}
Let $N=3$ and $a_2=15/8$. Under the same assumption as Proposition~\ref{Thm-Gain}, if we further assume that
$\|f_0\|_{L^{a_2}_{x,v}}<\infty$, then solution $f_+$ in Proposition~\ref{Thm-Gain} also satisfies
\begin{equation}
\|f_+\|^2_{L^{q_2}_t L^{r_2}_x L^{p_2}_v}<\infty,
\end{equation} 
where $(1/q_2,1/r_2,1/p_2)=(1/2,\;11/30,\;21/30)$ is a KT-admissible triplet with $1/p_2+1/r_2=2/a_2$. 
\end{prop}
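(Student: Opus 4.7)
The plan is to reproduce the Duhamel/Strichartz closure argument used in the proof of~\eqref{desire-est-sol}, but now with the new KT-admissible triplet $(q_2, r_2, p_2)$ on the left-hand side (so that $a = a_2 = 15/8$ instead of $a = N = 3$). First one verifies directly from Definition~\ref{D:admissible} that $(q_2, r_2, p_2)$ with $(1/q_2, 1/r_2, 1/p_2) = (1/2, 11/30, 21/30)$ is KT-admissible, non-endpoint, with $\mathrm{HM}(p_2, r_2) = 15/8$: indeed $p^*(15/8) = 45/32$ and $r^*(15/8) = 45/16$, and all required bounds are strict. Next, fix a solvable triplet $(\mbq,\mbr,\mbp)$ from~\eqref{solvable-g} with $1/\mbp$ in a suitable sub-interval of $(1/3,\,4/9)$, and define conjugate exponents by
\[
\frac{1}{\tilde p'} = \frac{1}{\mbp} + \frac{1}{p_2} - \frac{2}{3}, \qquad \frac{1}{\tilde r'} = \frac{1}{\mbr} + \frac{1}{r_2}, \qquad \frac{1}{\tilde q'} = \frac{1}{\mbq} + \frac{1}{q_2}.
\]
A direct computation gives $\mathrm{HM}(\tilde p',\tilde r') = 15/8 = \mathrm{HM}(p_2,r_2)$, and for $1/\mbp$ in the appropriate sub-range one checks that $(\tilde q,\tilde r,\tilde p)$ is itself a non-endpoint KT-admissible triplet.

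Applying Proposition~\ref{P:Convolution} in the $v$-variable with $(\mt p,\mt q,\mt r) = (\mbp,p_2,\tilde p')$ and $\gamma = -1$, followed by H\"older's inequality in $x$ and $t$, yields the bilinear estimate
\[
\|Q^+(f_+,f_+)\|_{L^{\tilde q'}_t L^{\tilde r'}_x L^{\tilde p'}_v} \leq C\,\|f_+\|_{L^{\mbq}_t L^{\mbr}_x L^{\mbp}_v}\,\|f_+\|_{L^{q_2}_t L^{r_2}_x L^{p_2}_v}.
\]
Feeding this into the Strichartz estimate of Proposition~\ref{Strichartz-est} applied to the Duhamel formula $f_+ = U(t)f_0 + W(t)Q^+(f_+,f_+)$ produces
\[
\|f_+\|_{L^{q_2}_t L^{r_2}_x L^{p_2}_v} \leq C\,\|f_0\|_{L^{a_2}_{x,v}} + C\,\|f_+\|_{L^{\mbq}_t L^{\mbr}_x L^{\mbp}_v}\,\|f_+\|_{L^{q_2}_t L^{r_2}_x L^{p_2}_v}.
\]
Since Proposition~\ref{Thm-Gain} gives $\|f_+\|_{L^{\mbq}_t L^{\mbr}_x L^{\mbp}_v} \leq C\eta$, for $\eta$ small enough the last term absorbs into the left-hand side, yielding finiteness.

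Because absorption presupposes that the new norm is already finite, rigor requires one of two supplementary steps: either rerun the contraction argument that produced $f_+$ in the intersection space $L^{\mbq}_t L^{\mbr}_x L^{\mbp}_v \cap L^{q_2}_t L^{r_2}_x L^{p_2}_v$, or apply the Picard iteration~\eqref{positive} of Corollary~\ref{pos-gain} and observe that the bilinear estimate above controls each iterate's $L^{q_2}_t L^{r_2}_x L^{p_2}_v$ norm by a geometric series bounded by $C\|f_0\|_{L^{a_2}_{x,v}}$. The main (and essentially only) non-trivial step is the exponent bookkeeping used to produce the conjugate triplet $(\tilde q,\tilde r,\tilde p)$ — verifying both its KT-admissibility over a nonempty sub-range of $\mbp$ and the harmonic-mean matching $\mathrm{HM}(\tilde p',\tilde r') = a_2$; all other ingredients (Strichartz estimates and the gain-term convolution bound) are already established in the paper.
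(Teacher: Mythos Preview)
Your proof is correct and follows essentially the same approach as the paper: the conjugate exponents you define coincide exactly with the paper's choice $1/\tilde{p}'_{2}=1/\mbp+1/30$, $1/\tilde{r}'_{2}=1/\mbr+11/30$, $1/\tilde{q}'_{2}=1/\mbq+1/2$, and the bilinear estimate plus Strichartz absorption is carried out identically. Your explicit remark that absorption requires a priori finiteness (handled via Picard iteration or a contraction in the intersection space) addresses a point the paper leaves implicit.
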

\begin{proof}  Due to the assumption that $\|f_0\|_{L^{a_2}_{x,v}}<\infty$,
we claim that there exist  KT-admissible triplets $(q_{2},r_{2},p_{2})$ and
$(\tilde{q}_{2},\tilde{r}_{2},\tilde{p}_{2})$ such that 
\begin{equation}\label{Gain-p2}
\|Q^+(f_+,f_+)\|_{L^{\tilde{q}'_{2}}_tL^{\tilde{r}'_{2}}_xL^{\tilde{p}'_{2}}_v} \leq C
\|f_+\|_{L^{\mbq}_tL^{\mbr}_xL^{\mbp}_v}\|f_+\|_{L^{q_{2}}_tL^{r_{2}}_xL^{p_{2}}_v},
\end{equation}
and $a_{2}=HM(p_{2},r_{2})=HM(\tilde{p}'_{2},\tilde{r}'_{2})$ where $\tilde{p}'_2$ means 
the conjugate of $\tilde{q}_2$ and so on.
From this together with Strichartz estimate \eqref{E:Strichartz}, we have
\begin{equation*}
\begin{split}
\|f_+\|_{L^{q_{2}}_tL^{r_{2}}_xL^{p_{2}}_v}&=\|Sf_+\|_{L^{q_{2}}_tL^{r_{2}}_xL^{p_{2}}_v}\\
& \leq C_{0}\|f_{0}\|_{L^{a_{2}}_{x,v}}+C_{1}\|Q^+(f_+,f_+)\|
_{L^{\tilde{q}'_{2}}_tL^{\tilde{r}'_{2}}_xL^{\tilde{p}'_{2}}_v} \\
&  \leq C_{0}\|f_{0}\|_{L^{a_{2}}_{x,v}}+C_{2}\|f_+\|_{L^{\mbq}_tL^{\mbr}_xL^{\mbp}_v}\|f_+\|_{L^{q_{2}}_tL^{r_{2}}_xL^{p_{2}}_v}.
\end{split}
\end{equation*}
The proof of Proposition~\ref{Thm-Gain} implies that $C_2\|f_+\|_{L^{\mbq}_tL^{\mbr}_xL^{\mbp}_v}<1$ 
 where $(\mbq,\mbr,\mbp)$ is a solvable triplet.  Thus we have 
\[
\|f_+\|_{L^{q_{2}}_tL^{r_{2}}_xL^{p_{2}}_v} \leq C_3 \|f_{0}\|_{L^{a_{2}}_{x,v}}<\infty.
\]
\indent To prove \eqref{Gain-p2}, 
we define $\tilde{p}'_{2}$ and $\tilde{r}'_{2}$ as  follows: 
\begin{equation}\label{choose-p-2}
\frac{1}{\tilde{p}'_{2}}:=\frac{1}{\mbp}+\frac{1}{30},\;\frac{1}{\tilde{r}'_{2}}:=\frac{1}{\mbr}+\frac{11}{30},\;
\frac{1}{\tilde{q}'_{2}}=\frac{1}{2}+\frac{1}{\mbq}.
\end{equation}
 By \eqref{G-convolution-est}, it is easy to have that 
\begin{subequations}
\begin{align}
&\frac{1}{\mbp}+\frac{1}{p_{2}}=\frac{2}{3}+\frac{1}{\tilde{p}'_{2}}\;,\; 
\frac{1}{\mbp}<\frac{1}{\tilde{p}'_{2}}<\frac{1}{p_{2}}\;, \;\label{uni-v}\\
&\frac{1}{\mbr}+\frac{1}{r_{2}}=\frac{1}{\tilde{r}'_{2}} \;, \label{uni-x}\\
&\frac{1}{p_{2}}+\frac{1}{r_{2}}=\frac{1}{\tilde{p}'_{2}}+\frac{1}{\tilde{r}'_{2}}=\frac{2}{a_{2}}=\frac{16}{15}\;, \label{uni-har}\\
&\frac{1}{q_{2}}=\frac{3}{2}(\frac{1}{p_{2}}-\frac{1}{r_{2}})
\;,\;\frac{1}{\tilde{q}_{2}}=\frac{3}{2}(\frac{1}{\tilde{p}_{2}}-\frac{1}{\tilde{r}_{2}})\;,\;\qquad\qquad\;\label{uni-q}. \\
&\frac{1}{\mbq}+\frac{1}{q_{2}}= \frac{1}{\tilde{q}'_{2}}.\;\label{uni-time}
\end{align}
\end{subequations}
and the proof of \eqref{Gain-p2} is finished.
\end{proof}
\begin{rem}
Our choice of $p_{2},\; r_{2},\;\tilde{p}_{2},\;\tilde{r}_{2}$ in~\eqref{choose-p-2} is one of possible combinations 
which satisfies~(\ref{uni-v},b,c,d,e). In fact, by~\eqref{pr-star-2}, i.e., $1/r_{2}<1/p_{2}<2/r_{2}$, 
and~\eqref{uni-v}, it is easy to find more options. 
\end{rem}

By \eqref{G-convolution-est}, the condition $1/\mbp<1/\tilde{p}'_{2}<1/p_{2}$ can be removed for \eqref{Gain-p2}
as an estimate for the gain term. However it is compulsory for the   loss term due to  Lemma~\ref{T:Loss}. In summary, we have the following result. 
\begin{cor}\label{C-L-p2} 
Use the same notations as Proposition~\ref{P-p2-est}. 
Suppose $f_{1}\in L^{\mbq}_tL^{\mbr}_xL^{\mbp}_v$ and $f_{2}\in L^{q_{2}}_tL^{r_{2}}_xL^{p_{2}}_v$, then 
$Q^{\pm}(f_{1},f_{2})\in L^{\tilde{q}'_{2}}_tL^{\tilde{r}'_{2}}_xL^{\tilde{p}'_{2}}_v$ and 
\[
\begin{split}
& \|Q^-(f_1,f_2)\|_{L^{\tilde{q}'_{2}}_tL^{\tilde{r}'_{2}}_xL^{\tilde{p}'_{2}}_v} \leq C
\|f_1\|_{L^{\mbq}_tL^{\mbr}_xL^{\mbp}_v}\|f_2\|_{L^{q_{2}}_tL^{r_{2}}_xL^{p_{2}}_v},\\
&\|Q^+(f_1,f_2)\|_{L^{\tilde{q}'_{2}}_tL^{\tilde{r}'_{2}}_xL^{\tilde{p}'_{2}}_v} \leq C
\|f_1\|_{L^{\mbq}_tL^{\mbr}_xL^{\mbp}_v}\|f_2\|_{L^{q_{2}}_tL^{r_{2}}_xL^{p_{2}}_v},\\
&\|Q^+(f_1,f_2)\|_{L^{\tilde{q}'_{2}}_tL^{\tilde{r}'_{2}}_xL^{\tilde{p}'_{2}}_v} \leq C
\|f_2\|_{L^{\mbq}_tL^{\mbr}_xL^{\mbp}_v}\|f_1\|_{L^{q_{2}}_tL^{r_{2}}_xL^{p_{2}}_v}.
\end{split}
\]
\end{cor}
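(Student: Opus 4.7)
The plan is to derive all three inequalities by the same scheme already used for \eqref{Gain-p2}: apply the velocity-variable bilinear estimate (Proposition~\ref{P:Convolution} for the gain term, Lemma~\ref{T:Loss} for the loss term) followed by H\"older in $x$ and in $t$. The critical observation is that the exponents $\tilde{p}'_2, \tilde{r}'_2, \tilde{q}'_2$ prescribed in \eqref{choose-p-2} were chosen precisely so that the velocity scaling relation \eqref{uni-v}, the H\"older pairings \eqref{uni-x} and \eqref{uni-time}, and the KT-admissibility conditions \eqref{uni-q} all hold simultaneously. Thus the bookkeeping is effectively already done inside the proof of Proposition~\ref{P-p2-est}; Corollary~\ref{C-L-p2} is just the bilinear reformulation with distinct $f_1,f_2$ in place of $f_+,f_+$.

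For the two gain-term estimates I would take $(\mt{p},\mt{q},\mt{r})=(\mbp, p_2, \tilde{p}'_2)$ in Proposition~\ref{P:Convolution}; the scaling condition $1/\mbp+1/p_2 = 1+\gamma/N+1/\tilde{p}'_2$ at $N=3,\gamma=-1$ is exactly the equality recorded in \eqref{uni-v}. Since Proposition~\ref{P:Convolution} imposes no asymmetry between its two inputs, either assignment ($f_1\in L^{\mbp}_v,f_2\in L^{p_2}_v$ or the reverse) is admissible, and then H\"older in $x$ via \eqref{uni-x}, \eqref{uni-har} combined with H\"older in $t$ via \eqref{uni-time} delivers both bilinear bounds for $Q^+$. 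For the loss term I would apply Lemma~\ref{T:Loss} with the same exponent assignment; the extra constraint $1/\mt{p}<1/\mt{r}$ there reads $1/\mbp<1/\tilde{p}'_2$, which is precisely the strict inequality already built into \eqref{uni-v}. The same H\"older steps in $x$ and $t$ then close the estimate.

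The only genuine wrinkle — and the reason the statement records only a single loss-term inequality — is the asymmetry of Lemma~\ref{T:Loss}: swapping the roles of $f_1$ and $f_2$ in $Q^-$ would force $1/p_2<1/\tilde{p}'_2$, but the chain $1/\mbp<1/\tilde{p}'_2<1/p_2$ from \eqref{uni-v} rules this out. This is the same obstruction discussed after Lemma~\ref{T:Loss} that prevents a direct Strichartz treatment of the full Boltzmann equation, and I expect it to dictate a careful placement of the upper and lower iterates inside $Q^-$ when closing the Kaniel-Shinbrot scheme in the next section.
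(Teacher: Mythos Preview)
Your proposal is correct and follows essentially the same approach as the paper: the paper simply observes that the exponent relations \eqref{uni-v}--\eqref{uni-time} already verified in the proof of Proposition~\ref{P-p2-est} put you in the range of Proposition~\ref{P:Convolution} (symmetrically, for both orderings of the inputs) and of Lemma~\ref{T:Loss} (for one ordering only, because of the constraint $1/\mbp<1/\tilde{p}'_2$ recorded in \eqref{uni-v}). Your discussion of why only one $Q^-$ inequality survives is exactly the point the paper makes in the sentence preceding the corollary.
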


\section{Back to the full equation}\par

\subsection{Well-defined of Loss term and Kaniel and Shinbrot's iteration}\label{sub-loss}

We will follow the idea of Chen, Denlinger and Pavlovi\'{c}~\cite{CDP21} to recover 
the solutions to the full Boltzmann equation from the solutions to the gain term only Boltzmann equation
by making use of Kaniel and Shinbrot's iteration ~\cite{KS78,IS84}.

\begin{prop}\label{P-solve-full-1} 
Consider the Cauchy problem~\eqref{E:Cauchy} with $N=3,\;\gamma=-1$. Suppose the initial data
$$
f_{0}\in {\mathbb B}_{\eta}=\{f_{0} | f_0\geq 0,\; \|f_{0}\|_{L^{3}_{x,v}}<\eta,\;
\|f_0\|_{L^{15/8}_{x,v}}<\infty\}\subset L^3_{x,v},
$$
where $\eta$ is chosen in Propositions~\ref{Thm-Gain}.  Then ~\eqref{E:Cauchy} admits a non-negative unique 
mild solution 
 \[
f\in C([0,\infty),L^3_{x,v})\cap
L^{\mbq}([0,\infty],L^{\mbr}_xL^{\mbp}_v) 
\]
where the triple $(\mbq,\mbr,\mbp)$ lies in the set~\eqref{solvable}.
The solution map $f_0\in B_{\eta}\subset L^3_{x,v}\rightarrow f\in L^{\mbq}_tL^{\mbr}_xL^{\mbp}_v$ 
is Lipschitz continuous and the solution $f$ scatters with respect to the kinetic 
transport operator in $L^3_{x,v}$. 
\end{prop}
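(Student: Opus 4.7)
The plan is to follow the Chen--Denlinger--Pavlovi\'c strategy: use the solution $f_+$ of the gain--term--only equation as an upper barrier for a Kaniel--Shinbrot iteration, so that the full--equation solution is sandwiched between $0$ and $f_+$. The hypothesis $\|f_0\|_{L^{15/8}_{x,v}}<\infty$ is indispensable because Lemma~\ref{T:Loss} carries the asymmetric constraint \eqref{non-symmetry}, and the only way I see to put the loss term into a usable Strichartz dual along the iteration is through the auxiliary admissible triplet $(q_{2},r_{2},p_{2})$ of Proposition~\ref{P-p2-est} and Corollary~\ref{C-L-p2}, whose very existence rests on that extra bound.

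First I would set $h_1\equiv 0$ and $g_1:=f_+$, where by Proposition~\ref{Thm-Gain}, Corollary~\ref{pos-gain} and Proposition~\ref{P-p2-est} one has $f_+\ge 0$ with $f_+\in C([0,\infty),L^3_{x,v})\cap L^{\mbq}_tL^{\mbr}_xL^{\mbp}_v\cap L^{q_{2}}_tL^{r_{2}}_xL^{p_{2}}_v$. Define $(h_{n+1},g_{n+1})$ by the coupled linear system
\[
\begin{aligned}
(\partial_t+v\cdot\nabla_x)h_{n+1}+Q^-(h_{n+1},g_n)&=Q^+(h_n,h_n),\quad h_{n+1}(0)=f_0,\\
(\partial_t+v\cdot\nabla_x)g_{n+1}+Q^-(g_{n+1},h_n)&=Q^+(g_n,g_n),\quad g_{n+1}(0)=f_0,
\end{aligned}
\]
solved along characteristics using the integrating factor generated by the loss frequency. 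Since $Q^-(g_2,0)=0$ and $Q^+(0,0)=0$ one has $g_2=f_+$ and $h_2\ge 0$, so the beginning condition $0=h_1\le h_2\le g_2\le g_1=f_+$ is in place; the standard Kaniel--Shinbrot induction then propagates $h_n\le h_{n+1}\le g_{n+1}\le g_n\le f_+$ for all $n$, with every $Q^\pm$ integrand in the system well-defined in $L^{\tilde{q}'_{2}}_tL^{\tilde{r}'_{2}}_xL^{\tilde{p}'_{2}}_v$ by Corollary~\ref{C-L-p2}.

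Pass to monotone pointwise limits $h:=\lim h_n$, $g:=\lim g_n$ dominated by $f_+$. Dominated convergence in the Duhamel form shows that $(h,g)$ solves the limit system associated to the full Boltzmann equation with $0\le h\le g\le f_+$ and $h(0)=g(0)=f_0$. To close the iteration I would estimate $w:=g-h\ge 0$; subtracting the two equations gives
\[
(\partial_t+v\cdot\nabla_x)w+Q^-(w,h)-Q^-(h,w)=Q^+(g,w)+Q^+(w,h),\quad w(0)=0,
\]
and since the absorption $Q^-(w,h)=wL(h)$ is nonnegative and the remaining bilinear terms are nonnegative functions of $w\ge 0$, the integrating-factor-along-characteristics representation yields
\[
0\le w(t)\le\int_0^tU(t-s)\bigl[Q^-(h,w)+Q^+(g,w)+Q^+(w,h)\bigr](s)\,ds.
\]
Taking the $L^{q_{2}}_tL^{r_{2}}_xL^{p_{2}}_v$ Strichartz norm and applying Corollary~\ref{C-L-p2} to each bilinear term on the right (keeping the $f_+$--sized factor $g$ or $h$ in the $L^{\mbq}_tL^{\mbr}_xL^{\mbp}_v$ slot) gives
\[
\|w\|_{L^{q_{2}}_tL^{r_{2}}_xL^{p_{2}}_v}\le C\|f_+\|_{L^{\mbq}_tL^{\mbr}_xL^{\mbp}_v}\|w\|_{L^{q_{2}}_tL^{r_{2}}_xL^{p_{2}}_v}\le C\eta\,\|w\|_{L^{q_{2}}_tL^{r_{2}}_xL^{p_{2}}_v};
\]
smallness of $\eta$ forces $w\equiv 0$, and since $w\ge 0$ this gives $w=0$ almost everywhere. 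Hence $f:=h=g$ is a non-negative mild solution.

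For uniqueness, any other non-negative mild solution $\tilde f\in L^{\mbq}_tL^{\mbr}_xL^{\mbp}_v$ satisfies $\tilde f\le f_+$ (as $f_+$ is a Kaniel--Shinbrot upper barrier), and a difference estimate identical to the one above kills $f-\tilde f$. Lipschitz dependence on $f_0$ follows by writing the equation for the difference of two solutions with distinct initial data and absorbing the quadratic terms using the smallness of $\eta$, combined with \eqref{desire-est-sol}. Scattering in $L^3_{x,v}$ reduces to the convergence of $f_\infty:=f_0+\int_0^\infty U(-s)Q(f,f)(s)\,ds$ in $L^3_{x,v}$, which follows from finiteness of the Strichartz norms of $f$ and the bilinear estimates already used. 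The main obstacle throughout is the closure step $g=h$: had we only an $L^3_{x,v}$ bound on $f_0$, the asymmetry \eqref{non-symmetry} would preclude any Strichartz bound on $Q^-$ in the same scale as $Q^+$, and the difference $w$ could not be controlled; the $L^{15/8}_{x,v}$ assumption is inserted precisely to provide the mixed-scale estimate of Corollary~\ref{C-L-p2} that makes the closure possible.
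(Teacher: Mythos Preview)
Your proof follows the paper's strategy: Kaniel--Shinbrot with $h_1=0$, $g_1=f_+$, and the auxiliary triplet $(q_2,r_2,p_2)$ of Proposition~\ref{P-p2-est} together with Corollary~\ref{C-L-p2} to handle the loss term. Two tactical differences are worth flagging. For the closure $g=h$, you absorb via global smallness, i.e.\ $C\|f_+\|_{L^{\mbq}_tL^{\mbr}_xL^{\mbp}_v}<1$; this is fine in spirit, but the constant $C$ here (three bilinear bounds from Corollary~\ref{C-L-p2} plus a Strichartz constant) is not the contraction constant of Proposition~\ref{Thm-Gain}, so strictly speaking you may need a smaller $\eta$ than the one fixed in the statement. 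The paper instead runs a local continuity argument (Lemma~\ref{w-equation}): set $t_0=\inf\{t:\|w\|_{L^{q_2}([0,t],L^{r_2}_xL^{p_2}_v)}>0\}$ and shrink the interval $[t_0,s]$ until the coefficient in~\eqref{w-est} drops below~$1$, which needs only finiteness of the Strichartz norms and no extra smallness. For uniqueness, you assert $\tilde f\le f_+$ for any non-negative mild solution; this is true but requires its own justification, and the paper bypasses it by writing the difference of two arbitrary non-negative solutions with the integrating factor $e^{-\int L(g)}$ to get~\eqref{w-uni} and then reusing the same continuity argument on $|w|$.
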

\begin{proof} Let us denote the loss term $Q^{-}(f_{1},f_{2})=f_{1}L(f_{2})$.
First we recall that the Kaniel-Shinbrot iteration ensures that if there exist measurable functions $h_1,h_2,g_1,g_2$ which 
satisfy the beginning condition, i.e.,   
\begin{equation}\label{begin-condition}
 0\leq h_{1}\leq h_{2}\leq g_{2}\leq g_{1},
\end{equation}
then the iteration($n\geq 2$)
\begin{equation}\label{Iteration}
\begin{split}
& \partial_{t} g_{n+1}+v\cdot\nabla_{x} g_{n+1}+ g_{n+1}L(h_{n})=Q^{+}(g_{n},g_{n}) \\
& \partial_{t} h_{n+1}+v\cdot\nabla_{x} h_{n+1}+ h_{n+1} L (g_{n})=Q^{+}(h_{n},h_{n}) \\
& g_{n+1}(0)=h_{n+1}(0)=f_{0}
\end{split}
\end{equation}
will induce the monotone sequence of measurable functions
\begin{equation}\label{monotone-sequence}
0\leq h_1\leq h_n\leq h_{n+1}\leq g_{n+1}\leq g_n\leq g_1.
\end{equation}
Thus the monotone convergence theorem implies the existence of the limits $g, h$ with $0\leq h\leq g\leq g_1$
which satisfy 
\begin{equation}\label{Iteration-limit}
\begin{split}
& \partial_{t} g+v\cdot\nabla_{x} g+ g L(h )=Q^{+}(g,g) \\
& \partial_{t} h+v\cdot\nabla_{x} h+ h L (g)=Q^{+}(h,h) \\
& g(0)=h(0)=f_{0}
\end{split}
\end{equation}
Hence the Cauchy problem~\eqref{E:Cauchy} is solved if one can further prove $g=h(=f)$.

Based on the Proposition~\ref{Thm-Gain}, it is natural to choose $h_1\equiv 0$ and $g_1=f_{+}$ where $f_{+}\geq 0$
is the solution of gain term only Boltzmann equation~\eqref{G-Cauchy} with initial data  $\|f_{0}\|_{L^{3}_{x,v}}<\eta$. 
The Proposition~\ref{Thm-Gain}  ensures
\begin{equation}\label{f-plus-finite}
f_+\in C([0,\infty),L^3_{x,v})\cap
L^{\mbq}([0,\infty],L^{\mbr}_xL^{\mbp}_v) 
\end{equation}
where $(\mbq,\mbr,\mbp)$ is a solvable triplet (i.e., satisfying~\eqref{solvable-g}).  

According to~\eqref{Iteration}, we want to 
find $h_{2}$ and $g_{2}$ through 
\begin{equation}\label{h-g-2}
\begin{split}
& \partial_{t} g_{2}+v\cdot\nabla_{x} g_{2}+ g_{2}L(h_{1})=Q^{+}(g_{1},g_{1}) \\
& \partial_{t} h_{2}+v\cdot\nabla_{x} h_{2}+ h_{2} L (g_{1})=Q^{+}(h_{1},h_{1}) \\
& g_{2}(0)=h_{2}(0)=f_{0}
\end{split}
\end{equation}
Since $h_{1}\equiv 0$, the first equation of~\eqref{h-g-2}  is exactly the gain term only Boltzmann equation. Hence 
we have $g_{2}=g_{1}=f_{+}$ by Proposition~\ref{Thm-Gain} and
\begin{equation}\label{g-2}
g_{2}(t)=U(t)f_{0}+\int_{0}^{t} U(t-s)Q^{+}(g_{1},g_{1})(s)ds. 
\end{equation}

Next we want to solve the second equation of~\eqref{h-g-2}  with the given $g_{1}$. More precisely, formally we have 
\begin{equation}\label{h-2}
  h_{2}(t)=U(t)f_{0}\; e^{-\int_{0}^{t} U(t-s) L(g_{1})(s) ds}.
\end{equation}
To ensure that $L(g_{1})$ is pointwisely a.e. well-defined  when  $g_{1}=f_{+}$ 
satisfies~\eqref{f-plus-finite}, we recall that the assumption $\|f_{0}\|_{L^{15/8}_{x,v}}<\infty$ and Proposition~\ref{P-p2-est}
ensure 
\begin{equation}\label{p2boundf+}
g_{1}=f_{+}\in L^{q_{2}}([0,\infty],L^{r_{2}}_xL^{p_{2}}_v).
\end{equation}
Note that we are looking for a solution $h_{2}\in L^{\mbq}([0,\infty],L^{\mbr}_xL^{\mbp}_v)$. From the estimate 
of  Corollary~\ref{C-L-p2}, 
\[
\|h_{2}L(g_1)\|_{L^{\tilde{q}'_{2}}_tL^{\tilde{r}'_{2}}_xL^{\tilde{p}'_{2}}_v} \leq C
\|h_{2}\|_{L^{\mbq}_tL^{\mbr}_xL^{\mbp}_v}\|g_1\|_{L^{q_{2}}_tL^{r_{2}}_xL^{p_{2}}_v},
\] 
we know that if $\phi\in L^{\tilde{q}_{2}}_{t}L^{\tilde{r}_{2}}_{x}L^{\tilde{p}_{2}}_{v}$ then $\lr{h_{2}L(g_{1}),\phi}$
is bounded. Since $\lr{h_{2}L(g_{1}),\phi}=\lr{L(g_{1}),h_{2}\phi}$, we have $L(g_{1})\in L^{\mbq_{2}}_{t}L^{\mbr_{2}}_{x}L^{\mbp_{2}}_{v}$ where $(1/\mbq_{2})'=1/\tilde{q}_{2}+1/\mbq$
, $(1/\mbr_{2})'=1/\tilde{r}_{2}+1/\mbr$ and $(1/\mbp_{2})'=1/\tilde{p}_{2}+1/\mbp$. Therefore $L(g_{1})$ is pointwisely 
a.e. well-defined.

Now we can compute $h_2$ by~\eqref{h-2}. It is easy to  have $h_1\equiv 0\leq h_2\leq U(t)f_0\leq g_2$ by non-negativity of $g_1$.  Therefore we conclude
the beginning condition~\eqref{begin-condition}.  From~\eqref{p2boundf+}, we also have that
\[
h_{2},\;g_{2}\in L^{q_{2}}([0,\infty],L^{r_{2}}_xL^{p_{2}}_v).
\]
Thus we can repeat the above argument to check that each term in~\eqref{Iteration} is well-defined. 
Therefore the method of Kaniel-Shinbrot 
ensures the existence of monotone sequence~\eqref{monotone-sequence} and the limit functions $g,\;h$ satisfy~\eqref{Iteration-limit}.

We also note that from the monotone convergence theorem and~\eqref{monotone-sequence}, we have
\begin{equation}\label{g-h-finite}
\begin{split}
& g,\;h \in  L^{\mbq}([0,\infty],L^{\mbr}_xL^{\mbp}_v), \\
& g,\;h \in  L^{q_{2}}([0,\infty],L^{r_{2}}_xL^{p_{2}}_v), \\
& Q^{+}(g,g), \;Q^{+}(h,h)\in L^{\tmbq'}_tL^{\tmbr'}_xL^{\tmbp'}_{v},\\
& Q^{\pm}(g,g),\;Q^{\pm}(h,h)\in L^{\tilde{q}'_{2}}_tL^{\tilde{r}'_{2}}_xL^{\tilde{p}'_{2}}_{v}.
\end{split}
\end{equation}
Thus we have a solution for the full Boltzmann equation if $g=h$.  

To prove that $g=h$, we let $w=g-h\geq 0$.  By~\eqref{Iteration-limit} the difference $w$ satisfies the equation
\[
 \partial_t w+v\cdot\nabla_x w=Q^+(g,w)+Q^+(w,h)+Q^-(g,w)-Q^-(w,g)
\]
with zero initial data. By   Lemma~\ref{w-equation} below we know that this equation has a unique solution 
$w\equiv 0$.  Thus $g=h$ and we conclude the global existence of the  non-negative mild solution for the 
full Boltzmann equation. The uniqueness of this solution can be proved by a standard continuity argument 
and the fact that the solutions are non-negative. We include it in subsection~\ref{s-unique-sol}.  Also the continuity 
in time, scattering of the solution and Lipschitz continuous of the solution map is included in the subsection
~\ref{s-continuity}.
 Thus we conclude the Proposition~\ref{P-solve-full-1}.
\end{proof}

\begin{lem}\label{w-equation}
Let $g,\;h$ be non-negative functions satisfy~\eqref{g-h-finite}.  Suppose that $w\geq 0$ is a mild solution 
of 
\begin{equation}\label{g-h-equation}
\left\{
\begin{array}{l}
 \partial_t w+v\cdot\nabla_x w=Q^+(g,w)+Q^+(w,h)+Q^-(g,w)-Q^-(w,g) \\
 w(0)=0.
\end{array}
\right.
\end{equation}
Then $w\equiv 0$. 
\end{lem}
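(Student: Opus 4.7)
The plan is to close a linear Gronwall-type inequality in the $(q_2,r_2,p_2)$ Strichartz norm on small time windows and then bootstrap over a finite partition of $[0,\infty)$. The key positivity observation is that since $w,g,h\ge 0$, all four bilinear terms $Q^{+}(g,w),Q^{+}(w,h),Q^{-}(g,w),Q^{-}(w,g)$ are non-negative, so from the mild form of \eqref{g-h-equation} with $w(0)=0$ we obtain the pointwise upper bound
\[
0\le w(t)\le \int_0^t U(t-s)\bigl[Q^{+}(g,w)+Q^{+}(w,h)+Q^{-}(g,w)\bigr](s)\,ds,
\]
the contribution from $-Q^{-}(w,g)\le 0$ having been dropped. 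The a priori inequality $0\le w\le g$ combined with the second line of \eqref{g-h-finite} gives $\|w\|_{L^{q_2}_t L^{r_2}_x L^{p_2}_v}<\infty$, which is what will let us absorb this quantity on the left at the end.

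Taking the $L^{q_2}_t L^{r_2}_x L^{p_2}_v([0,T])$ norm of the displayed inequality and invoking the inhomogeneous Strichartz estimate of Proposition~\ref{Strichartz-est} (compatible since $\mathrm{HM}(p_2,r_2)=a_2=\mathrm{HM}(\tilde p_2',\tilde r_2')$), each of the three forcing terms is handled by Corollary~\ref{C-L-p2} with $g$ or $h$ placed in the solvable-triplet slot $L^{\mbq}_t L^{\mbr}_x L^{\mbp}_v$ and $w$ placed in the $(q_2,r_2,p_2)$ slot. For the loss term $Q^{-}(g,w)$, the constraint $1/\mt{p}<1/\mt{r}$ from Lemma~\ref{T:Loss} forces the first argument to sit in $L^{\mbp}_v$ rather than $L^{p_2}_v$, which is compatible with $g\in L^{\mbq}_t L^{\mbr}_x L^{\mbp}_v$; the required ordering $1/\mbp<1/\tilde p_2'$ is exactly \eqref{uni-v}. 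This produces
\[
\|w\|_{L^{q_2}_t L^{r_2}_x L^{p_2}_v([0,T])}\le C\bigl(\|g\|_{L^{\mbq}_t L^{\mbr}_x L^{\mbp}_v([0,T])}+\|h\|_{L^{\mbq}_t L^{\mbr}_x L^{\mbp}_v([0,T])}\bigr)\|w\|_{L^{q_2}_t L^{r_2}_x L^{p_2}_v([0,T])}.
\]

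Since $\mbq<\infty$ (as $1/\mbq=3/\mbp-1$ with $1/\mbp>1/3$) and $g,h\in L^{\mbq}_t L^{\mbr}_x L^{\mbp}_v$, one can partition $[0,\infty)$ into finitely many successive intervals $[T_k,T_{k+1}]$ on each of which the bracketed coefficient is at most $\tfrac{1}{2C}$; the inequality then absorbs and forces $w\equiv 0$ a.e.\ on $[0,T_1]$, which upgrades to $w(T_1)=0$ by the $L^{a_2}_{x,v}$-continuity in $t$ that the Duhamel formula supplies once its forcing is in the dual Strichartz space just bounded. Restarting the mild formulation at $T_1$ with zero data and iterating over the finite partition yields $w\equiv 0$ on $[0,\infty)$. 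The main obstacle is the asymmetry of the loss-term estimate, which prevents closing the argument inside a single solvable-triple Strichartz space \eqref{desire-est-sol} as one does for the gain-term-only equation; the way out is precisely the mixed bilinear estimate of Corollary~\ref{C-L-p2}, whose availability, together with the a priori finiteness of the $(q_2,r_2,p_2)$-norm of $w$, hinges on the extra assumption $\|f_0\|_{L^{15/8}_{x,v}}<\infty$.
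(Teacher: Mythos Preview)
Your argument is correct and follows essentially the same route as the paper: the same positivity observation to drop $-Q^{-}(w,g)$, the same inhomogeneous Strichartz estimate in the $(q_2,r_2,p_2)$ scale, and the same bilinear bounds from Corollary~\ref{C-L-p2} to reach the linear inequality for $\|w\|_{L^{q_2}_tL^{r_2}_xL^{p_2}_v}$. The only cosmetic difference is in how you close: the paper defines $t_0=\inf\{t:\|w\|_{L^{q_2}([0,t],L^{r_2}_xL^{p_2}_v)}>0\}$ and lets $s\to t_0^{+}$ so that the coefficient $C(g,h,s)$ drops below $1$, whereas you partition $[0,\infty)$ into finitely many windows on which $\|g\|_{L^{\mbq}_tL^{\mbr}_xL^{\mbp}_v}+\|h\|_{L^{\mbq}_tL^{\mbr}_xL^{\mbp}_v}$ is small and bootstrap; both are standard and equivalent ways to exploit $\mbq<\infty$.
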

\begin{proof}

Consider the given time interval $[0,T]$ and define
\[
t_{0}=\inf {\Big \{} t\in [0,T]{\Big |} \|w(t)\|_{L^{q_{2}}([0, t],L^{r_{2}}_{x}L^{p_{2}}_{v})}>0 {\Big \}}.
\]
 Then $w\equiv 0$ for $0\leq t\leq t_{0}$. Let $t_{0}\leq s\leq T$.

From~\eqref{g-h-equation}, we have 
\[
 w(s)=\int_{0}^{s} U(t-\tau){\big [}Q^{+}(g,w)+Q^{+}(w,h)+Q^{-}(g,w)-Q^{-}(w,g){\big ]}(\tau) d\tau. 
\]
Noting that  $w\ge 0$,  $0\leq h\leq g$ and the operators $U(t), Q^{+}, Q^{-}$ are non-negative,  we have 
\begin{equation}\label{w-zero}
0\leq  w(s)\leq \int_{0}^{s} U(t-\tau){\big [}Q^{+}(g,w)+Q^{+}(w,h)+Q^{-}(g,w){\big ]}(\tau) d\tau. 
\end{equation}
Apply Strichartz estimates as Proposition~\ref{P-p2-est} and use the estimate of Corollary~\ref{C-L-p2}, then we have  
\begin{equation}\label{w-est}
\begin{split}
&\|w\|_{L^{q_{2}}([t_{0},s],L^{r_{2}}_xL^{p_{2}}_v)}\\
&\leq C( \| g\|_{L^{\mbq}([t_{0},s],L^{\mbr}_xL^{\mbp}_v)}+ \| h\|_{L^{\mbq}([t_{0},s],L^{\mbr}_xL^{\mbp}_v)})
\| w\|_{L^{q_{2}}(([t_{0},s],L^{r_{2}}_xL^{p_{2}}_v)}\\
&:=C(g,h,s) \| w\|_{L^{q_{2}}([t_{0},s],L^{r_{2}}_xL^{p_{2}}_v)}.
\end{split}
\end{equation}
Letting $s \rightarrow t_{0}+$, clearly we have $C(g,h,s)<1$ which means that 
\[
\| w\|_{L^{q_{2}}([t_{0},s],L^{r_{2}}_xL^{p_{2}}_v)}=0
\] for 
some $s>0$. By continuity, we have $w|_{[0,T]}\equiv 0$ for any $T>0$.
\end{proof}

\subsection{Uniqueness of the solution}\label{s-unique-sol}
Assume that $g\geq 0$ and $ h\geq 0$ both are mild solutions which satisfy~\eqref{E:Cauchy}. Let 
$w=g-h$. Comparing to Lemma~\ref{w-equation}, the function $w$ satisfies~\eqref{g-h-equation}, 
but we do not have the property $ w\geq 0$.  For our convenience, we rewrite~\eqref{g-h-equation} as
\[
\left\{
\begin{array}{l}
 \partial_t w+v\cdot\nabla_x w+wL(g)=Q^+(g,w)+Q^+(w,h)+Q^-(g,w) \\
 w(0)=0,
\end{array}
\right.
\]
and want to show $w\equiv 0$. Since $g,\;h$ and thus $w$ satisfy~\eqref{g-h-finite}, the term $L(g)$ is pointwisely
a.e. well-defined. Thus  the function $w$ satisfies 
\[
\begin{split}
& w(t)=\int_0^t  e^{-\int_s^t U(t-\tau)L(g)(\tau)d\tau} \\
&\hskip 3cm \cdot U(t-s){\big [} Q^+(g,w)+Q^+(w,h)+Q^-(g,w) {\big]}(s)ds.
\end{split}
\]
Using the  fact that $L(g)\geq 0$ since $g\geq 0$, we have 
\begin{equation}\label{w-uni}
| w(t) | \leq   \int_0^t   U(t-s){\big [} Q^+(g,|w|)+Q^+(|w|,h)+Q^-(g,|w|) {\big]}(s)ds.
\end{equation}
The equation~\eqref{w-uni} is in place of~\eqref{w-zero}  for the proof of $w\equiv 0$, thus we conclude 
the uniqueness of the solution.

\subsection{Continuity in $L^{3}_{x,v}$ and Scattering of the solution}\label{s-continuity}

Now we show that $f\in C([0,T],L^3_{x,v})$ for any $T\in[0,\infty]$. From the formula 
\begin{equation}\label{Duhamel-positive}
\int_{0}^{t} U(t-s)Q^{-}(f,f)(s)ds+f(t)=U(t)f_{0}+\int_{0}^{t} U(t-s)Q^{+}(f,f)(s)ds
\end{equation}
and the observation that each term in~\eqref{Duhamel-positive} is non-negative, we have 
\begin{equation}\label{Duhamel-ineq}
0\leq f(t)\leq U(t)f_{0}+\int_{0}^{t} U(t-s)Q^{+}(f,f)(s)ds.
\end{equation} 
It has been observed by Ovcharov~\cite{Ovc09} that  
$U(t)f_0\in C(\mathbb{R};L^N_{x,v})$, hence it suffices to show that $W(t)$ (see \eqref{D:solution-maps}) is also continuous. 
Let $0\leq t\in (0,\infty]$.  
Applying inhomogeneous Strichartz with triplet
$(\tmbq',\tmbr',\tmbp')$ used in~\eqref{desire-est-sol}, we see that 
\[
\|W(t)Q^{+}(f,f)\|_{L^{\infty}([0,t];L^3_{x,v})}=\int_0^t \|U(t-s)Q^{+}(f,f)\|_{L^3_{x,v}}ds
\]
is bounded.  Since $U(t)$ is continuous, we conclude that $W(t)$ is continuous from above expression. 
Also the solution map $f_0\in B_{\eta}\subset L^3_{x,v}\rightarrow f\in L^q_tL^r_xL^p_v$ is Lipschitz continuous.

Next we want to show that the solution $f$ scatters, i.e., there exists  a function $f_{\infty}\in L^{3}_{x,v}$ such that 
\[
\|f(t)-U(t)f_{\infty}\|_{L^3_{x,v}}\rightarrow 0\;\;{\rm as}\;t\rightarrow \infty.
\]
The above statement is equivalent to prove that 
\begin{equation}
\|U(-t)f(t)-f_{\infty}\|_{L^3_{x,v}}\rightarrow 0\;\;{\rm as}\;t\rightarrow \infty,
\end{equation}
since $U(t)$ preserves the $L^3_{x,v}$ norm. 

By the Duhamel formula, we have 
\begin{equation}\label{Duhamel}
U(-t)f(t)=f_0+\int_0^t U(-s)Q(f,f)(s)ds.
\end{equation}
Hence the scattering of $f(t)$ is confirmed if we have  the convergence of the integral 
\[
\int_0^{\infty} U(-t)Q(f,f)(t)dt
\]  
in $L^{3}_{x,v}$. In this case $f_{\infty}$ is given by
\begin{equation}\label{f-infty}
f_{\infty}=f_0+\int_0^{\infty} U(-t)Q(f,f)(t)dt.
\end{equation}

We rewrite~\eqref{Duhamel} as
\[
U(-t)f(t)+\int_0^t U(-s)Q^{-}(f,f)(s)ds=f_0+\int_0^t U(-s)Q^{+}(f,f)(s)ds.
\]
 Since each term of above equation is non-negative, thus we have 
 \[
 \begin{split}
 \int_0^t U(-s)Q^{-}(f,f)(s)ds& \leq f_0+\int_0^t U(-s)Q^{+}(f,f)(s)ds\\
 &\leq f_0+\int_0^{\infty} U(-s)Q^{+}(f,f)(s)ds.
\end{split}
\]
By monotone convergence theorem, it holds that
\begin{equation}\label{Duhamel-p}
 \int_0^{\infty} U(-s)Q^{-}(f,f)(s)ds\leq f_0+\int_0^{\infty} U(-s)Q^{+}(f,f)(s)ds.
\end{equation} 
Then we are reduced to prove the right hand side of~\eqref{Duhamel-p} is bounded 
in $L^{3}_{x,v}$.
 
 Let $U^*(t)$ be the adjoint operator of $U(t)$, it is clearly that $U^*(t)=U(-t)$. 
Let $(\tmbq,\tmbr,\tmbp)$ be the KT-admissible triplet chosen in the proof of~\eqref{desire-est-sol}
and recall that $1/\tmbp'+1/\tmbr'=2/3$. 
By duality, the homogeneous Strichartz estimate 
\[
\|U(t)g\|_{L^{\tmbq}_tL^{\tmbr}_xL^{\tmbp}_v}\leq C\|g\|_{L^{{3}/{2}}_{x,v}}
\]
implies
\[
\bigg\|\int_0^{\infty} U^*(t)Q^+(f,f)dt \bigg\|_{L^{3}_{x,v}}\leq C\|Q^+(f,f)\|_{L^{\tmbq'}_tL^{\tmbr'}_xL^{\tmbp'}_v}
\leq C\|f\|^{2}_{L^{\mbq}_tL^{\mbr}_xL^{\mbp}_v} 
\]
as before. Thus we conclude that $f$ scatters.

\subsection{Proof of Theorem~\ref{T3} }
\begin{proof}
As before, we apply the Strichartz estimate to the gain term only Boltzmann equation 
\[
\|Sf_+(t,x,v)\|_{L^q([0,T];L^r_xL^p_v)}\leq C{\big (}\|f_0\|_{L^a_{x,v}}+
\|Q(f_+,f_+)\|_{L^{\tilde{q}'}([0,T];L^{\tilde{r}'}_xL^{\tilde{p}'}_v)}{\big )}.
\]
To show that
\begin{equation}\label{local-contraction}
\begin{split}
& \|Sf_+(t,x,v)\|_{L^q([0,T];L^r_xL^p_v)}\\
& {\hskip 1cm}\leq C_1\|f_0(x,v)\|_{L^a_{x,v}}+ C_2 T^{\beta} 
\|f(t,x,v)\|^2_{L^q([0,T];L^r_xL^p_v)}
\end{split}
\end{equation}
with $\beta>0$,  we need to find KT-admissible triplets $(q,r,p)$ and $(\tilde{q},\tilde{r},\tilde{p})$
which satisfy
\[
\frac{2}{p}=1+\frac{\gamma}{N}+\frac{1}{\tilde{p}'}\;,\; \frac{2}{r}=\frac{1}{\tilde{r}'}\;,\;\frac{1}{p}+\frac{1}{r}
=\frac{1}{\tilde{p}'}+\frac{1}{\tilde{r}'}
\]
and
\[
\frac{2}{q}+\beta=\frac{1}{\tilde{q}'}.
\] 
It is already found in~\cite{HJ17}  that the set~\eqref{solvable-2} is the collection of all possible 
$(q,r,p)$ where
\[
\beta=\frac{(2-N)-\gamma}{2}>0.
\] 
To recover the solution for the full equation, we   need another pair of KT-admissible triplets $(q_2,r_2,p_2)$ and $(\tilde{q}'_2,\tilde{r}'_2,\tilde{p}'_2)$
. It is straightforward to check that the following choice works:
\[
\begin{split}
& \frac{1}{p_{2}}=2\alpha {\Big (}\frac{\gamma+N}{N} {\Big )}\;,\;
\frac{1}{r_{2}}=(3\alpha-1){\Big (}\frac{\gamma+N}{N} {\Big )} \\
&\frac{1}{\tilde{p}'_{2}}=(3\alpha-1){\Big (} \frac{\gamma+N}{N}{\Big )}\;,\;
\frac{1}{\tilde{r}'_{2}}=2\alpha {\Big (}\frac{\gamma+N}{N} {\Big )}.
\end{split}
\]
\end{proof}

\section{Theorem~\ref{result2}: The case $-1<\gamma<0$}

In this section, we give the proof for Theorem~\ref{result2}. It is in the same spirit as that of the case $\gamma=-1$ 
except that we need the weighted estimates for the gain and loss terms as well as the weighted Strichartz estimates.

\subsection{Weighted estimates} Let $\lr{v}=(1+|v|^{2})^{1/2}$.
To prove the weighted estimates for the gain term, we  consider the quantity 
\begin{equation}\label{gain-dual}
\lr{\lr{v}^{\ell}Q^{+}(f,g),\lr{v}^{-\ell}\psi},\;\ell>0.
\end{equation}
When $\ell=0$, Alonso, Carneiro and Gamba~\cite{ACG10} introduce a bilinear operator to give~\eqref{gain-dual} 
two representations which are used to prove the estimates  collected in Proposition~\ref{P:Convolution}
(see the upcoming proof of Proposition~\ref{P:Convolution-w} for two representations). In what follows,
we first prove that     the quantity~\eqref{gain-dual} with $\ell\neq0$  can also be bounded by the formulas with the same representations as $\ell=0$. Then the desired  estimate  follows.

\begin{prop}\label{P:Convolution-w} 
Let $\ell\geq 0$, $1<\mt{p}, \mt{q}, \mt{r} <\infty$ with $-N<\gamma\leq 0$ and
\begin{equation}\label{scaling-relation}
1/\mt{p}+1/\mt{q}=1+\gamma/N+1/\mt{r}. 
\end{equation}
Assume the kernel~\eqref{D:kernel}{\rm:} 
\[
B(v-v_*,\omega)=|v-v_*|^{\gamma}b(\cos\theta)
\]
with $b(\cos\theta)$ satisfies Grad's cutoff assumption~\eqref{D:Grad}.
Then the bilinear operator $Q^{+}(f,g)$ satisfies  
\begin{equation}\label{W-convolution}
\|\lr{v}^{\ell}Q^{+}(f,g)\|_{L^{\mt{r}}_{v}(\mathbb{R}^N)}\leq C\|\lr{v}^{\ell} f\|_{L^{\mt{p}}_{v}(\mathbb{R}^N)}
\|\lr{v}^{\ell}g\|_{L^{\mt{q}}_{v}(\mathbb{R}^N)}.
\end{equation}
If $\ell>N/m$ and $1<\mt{p}_{m}, \mt{q}_{m}, m, \mt{r}_{m} <\infty$ satisfy
\begin{equation}\label{minus-size-condition}
\frac{1}{\mt{p}_{m}}+\frac{1}{m}<1\;,\;\frac{1}{\mt{q}_{m}}+\frac{1}{m}<1,
\end{equation}
and
\begin{equation}\label{scaling-relation-w}
\frac{1}{\mt{p}_{m}}+\frac{1}{\mt{q}_{m}}+\frac{1}{m}=1+\frac{\gamma}{N}+\frac{1}{\mt{r}_{m}}, 
\end{equation}
then we have  
\begin{equation}\label{W-convolution-2}
\|\lr{v}^{\ell} Q^{+}(f,g)\|_{L^{\mt{r}_{m}}}\leq C(\mt{p}_{m},\ell) \|\lr{v}^{\ell} f\|_{L^{\mt{p}_{m}} }\|\lr{v}^{\ell} g\|_{L^{\mt{q}_{m}}}.
\end{equation}
\end{prop}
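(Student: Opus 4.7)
The plan is to treat both estimates together by duality. For a test function $\psi$, the standard weak formulation of the gain term (obtained from the change of variables $(v,v_*)\leftrightarrow(v',v_*')$ used inside the weak formulation of Proposition~\ref{P:Convolution}) gives
\[
\int \lr{v}^{\ell} Q^{+}(f,g)(v)\,\psi(v)\,dv \;=\; \int\!\!\int\!\!\int f(v)g(v_{*})\,\lr{v'}^{\ell}\psi(v')\,B(v-v_{*},\omega)\,d\omega\,dv_{*}\,dv.
\]
The single geometric input is the energy identity $|v|^{2}+|v_{*}|^{2}=|v'|^{2}+|v_{*}'|^{2}$, which yields $\lr{v'}^{2}\leq \lr{v}^{2}+\lr{v_{*}}^{2}$ and therefore the pointwise bound $\lr{v'}^{\ell}\leq C_{\ell}\big(\lr{v}^{\ell}+\lr{v_{*}}^{\ell}\big)$ for every $\ell\geq0$. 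Substituting this splits the dual integral into two symmetric pieces, one in which the $\ell$-weight sits on $f(v)$ and one in which it sits on $g(v_{*})$.

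For \eqref{W-convolution}, I would apply Proposition~\ref{P:Convolution} to each piece directly. Using the elementary bound $\|g\|_{L^{\mt{q}}}\leq \|\lr{v}^{\ell}g\|_{L^{\mt{q}}}$ (valid since $\lr{v}\geq 1$ and $\ell\geq0$), and the analogous bound for $f$, each piece is controlled by $C\,\|\lr{v}^{\ell}f\|_{L^{\mt{p}}}\|\lr{v}^{\ell}g\|_{L^{\mt{q}}}\|\psi\|_{L^{\mt{r}'}}$ under the scaling \eqref{scaling-relation}. Duality against $\psi\in L^{\mt{r}'}$ then gives \eqref{W-convolution}.

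For \eqref{W-convolution-2}, the extra $1/m$ in the scaling \eqref{scaling-relation-w} has to be absorbed by the weight $\lr{v}^{-\ell}$, which belongs to $L^{m}$ precisely under the hypothesis $\ell>N/m$. In the first piece I would rewrite $g(v_{*})=\lr{v_{*}}^{-\ell}[\lr{v_{*}}^{\ell}g(v_{*})]$ and apply H\"older to combine the factor $\lr{v_{*}}^{-\ell}$ with $\lr{v_{*}}^{\ell}g$ into a function of lower Lebesgue index $q'$ determined by $1/q'=1/\mt{q}_{m}+1/m$; the condition $1/\mt{q}_{m}+1/m<1$ in \eqref{minus-size-condition} is precisely what ensures $q'\in(1,\infty)$. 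With the exponents $(\mt{p}_{m},q',\mt{r}_{m})$ now satisfying the unweighted scaling \eqref{scaling-relation-0} by virtue of \eqref{scaling-relation-w}, Proposition~\ref{P:Convolution} applies and yields a bound by $C\|\lr{v}^{\ell}f\|_{L^{\mt{p}_{m}}}\|\lr{v}^{\ell}g\|_{L^{\mt{q}_{m}}}\|\lr{\cdot}^{-\ell}\|_{L^{m}}\|\psi\|_{L^{\mt{r}_{m}'}}$. The second piece is treated symmetrically, shifting $\lr{v}^{-\ell}$ onto $f(v)$ instead, which uses the remaining constraint $1/\mt{p}_{m}+1/m<1$. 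Duality against $\psi$ then concludes \eqref{W-convolution-2}.

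The main obstacle is conceptual rather than computational: one must recognize that the loss of symmetry between the weighted norms on the two factors can be undone by the \emph{additive} rather than \emph{multiplicative} form of the weight inequality, since it is exactly $\lr{v'}^{\ell}\leq C_{\ell}(\lr{v}^{\ell}+\lr{v_{*}}^{\ell})$ that lets us place the whole weight on one side and introduce a free $\lr{\cdot}^{-\ell}\in L^{m}$ factor on the other via H\"older. Once this is arranged, the bookkeeping reduces to checking that the auxiliary exponents $p'$, $q'$ are admissible ($>1$), which is exactly the content of \eqref{minus-size-condition}, and that the effective scaling matches the hypothesis of the unweighted Proposition~\ref{P:Convolution}, which is exactly \eqref{scaling-relation-w}.
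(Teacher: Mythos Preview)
Your proof is correct and follows essentially the same approach as the paper: both pass to the dual formulation, use the energy-conservation bound $\lr{v'}^{\ell}\lesssim \lr{v}^{\ell}+\lr{v_{*}}^{\ell}$ to shift the weight onto $f$ or $g$, and then reduce to the unweighted Proposition~\ref{P:Convolution} (for \eqref{W-convolution-2} absorbing the extra $1/m$ via H\"older against $\lr{\cdot}^{-\ell}\in L^{m}$, which is exactly what the paper does with its auxiliary exponents $a_{1},a_{2}$). The only cosmetic difference is that the paper re-enters the $\mathcal{P}$-representations of Alonso--Carneiro--Gamba rather than invoking Proposition~\ref{P:Convolution} as a black box; your version is slightly more economical.
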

\begin{rem}
Note that we use different notations  $\mt{p},\mt{q},\mt{r}$ and $\mt{p}_{m}, \mt{q}_{m}, \mt{r}_{m}$ to differ~\eqref{W-convolution}
from~\eqref{W-convolution-2} for their exponents satisfying different relations. 
\end{rem}
\begin{proof}[Proof of Proposition~\ref{P:Convolution-w}] 
First of all, we need to adopt the notations used in~\cite{ACG10}. Let
\[
\hat{u}=u/|u|,\; u=v-v_*.
\]
It is well known that the pre-post collision velocity relation~\eqref{E:pre-collision} is equivalent to  
\begin{equation}\label{sigma-rep}
v'=v-\frac{1}{2}(u-|u|\sigma),\;v'_*=v_*+\frac{1}{2}(u-|u|\sigma),
\end{equation}
and
\[
d\omega=\frac{1}{4\cos\theta} d\sigma
\]
where $\theta$ is the angle between $\omega$ and $u=v-v_*$.
In~\cite{ACG10}, the collision kernel is denoted by
\[
B(|u|,\hat{u}\cdot\sigma)=|u|^{\gamma}b(\hat{u}\cdot\sigma). 
\]
(Note that $\gamma$ above is denoted by $\lambda$ in~\cite{ACG10}.) They also define the bilinear operator
\begin{equation}\label{P}
\mathcal{P}(\psi,\phi)(u):=\int_{S^{N-1}}\psi(u^{-})\phi(u^{+})b(\hat{u}\cdot\sigma)\,d\sigma\,,
\end{equation}
where the variables $u^{+}$ and $u^{-}$ are defined by
\[
u^{-}:=\tfrac{1}{2}(u-|u|\sigma)\ \ \mbox{and}\ \ u^{+}:=u-u^{-}=\tfrac{1}{2}(u+|u|\sigma). 
\]
We note that the vector $\omega$ is often used in the occurrence~\eqref{E:pre-collision} and $\sigma$ 
in~\eqref{sigma-rep}, while  the vector $\sigma$ in~\eqref{sigma-rep} is denoted by $\omega$ in~\cite{ACG10}. 
However this difference of notations clearly does not affect the proof of any related result.
They also use  $\tau$ and $\mathcal{R}$ to denote the translation and reflection operators 
\[
\tau_{v}\psi(x):=\psi(x-v),\;\mathcal{R}\psi(x):=\psi(-x).
\]

Use above notations,  the representations 
\begin{equation}\label{dual-rep-P}
\begin{split}
\int_{\R^{N}} Q^{+}(f,g)(v)\; \psi (v) dv 
&=\int_{\R^{N}}\int_{\R^{N}} f(v)g(v-u) \mathcal{P}(\tau_{v}\mathcal{R}\psi,1)(u)|u|^{\gamma} dudv\\
&=\int_{\R^{N}}\int_{\R^{N}} f(u+v)g(v) \mathcal{P}(1, \tau_{-v}\mathcal{R}\psi)(u)|u|^{\gamma} dudv
\end{split}
\end{equation}
are used to prove the estimates collected in Proposition~\ref{P:Convolution}. More precisely, 
when $\gamma=0$, Alonso etc. used the first line of~\eqref{dual-rep-P} (equation (4.1) in~\cite{ACG10}) as a starting 
point to show~\eqref{W-convolution} with $\ell=0$ . When $-N<\gamma<0$, both lines of~\eqref{dual-rep-P} 
(equations (5.1) and (5.12) in~\cite{ACG10}) are  used   to show the 
case $\ell=0$ of~\eqref{W-convolution}.

On the other hand,  we follow the observation of Lions~\cite{Lio94} to write
\begin{equation}\label{dual-rep-T}
\begin{split}
\lr{Q^+(f,g),h}
&=\int_{\R^{N}}\int_{\R^{N}}\int_{S^{N-1}}  f(v)g(v_*) \psi(v') B(v-v_*,\omega) d\omega dv_* dv \\
&=\int_{\R^{N}}\int_{\R^{N}} f(v)g(v_*) (\tau_{-v_*}\circ T\circ \tau_{v_*}) \;\psi (v)dv_*dv.\\
\end{split}
\end{equation}
Here $T$ is a Radon transform 
\[
T \psi(x)=|x|^{\gamma}\int_{\omega\in S^{N-1}_+} b(\cos\theta)\;\psi(x-(x\cdot\omega)\omega) d\omega, 
\]
with
$\cos\theta={(x\cdot\omega)}/{|x|}, x\neq 0$, $x=|x|(0,0,1)$ and 
$\omega=(\cos\varphi\sin\theta,\sin\varphi\sin\theta,\cos\theta)$, $0\leq\theta\leq \pi/2 $. 
The regularizing effect of $T$ is first studied by Lions~\cite{Lio94}, then studied by several authors, 
see~\cite{JC12,JC20} for more details. 

Combining~\eqref{dual-rep-P} and~\eqref{dual-rep-T}, we have 
\begin{equation}\label{gain-dual-rep}
\begin{split}
\int Q^{+}(f,g)(v)\; \psi (v) dv
&=\iint f(v)g(v-u) \mathcal{P}(\tau_{v}\mathcal{R}\psi,1)(u)|u|^{\gamma} dudv\\
&=\iint  f(u+v)g(v) \mathcal{P}(1, \tau_{-v}\mathcal{R}\psi)(u)|u|^{\gamma} dudv\\
&=\iint f(v)g(v_*) (\tau_{-v_*}\circ T\circ \tau_{v_*})\psi(v)dv_*dv.
\end{split}
\end{equation}

With these preparations,  we are ready to estimate the quantity $\lr{Q^{+}(f,g),\psi}$. 
From the conservation of  energy, either $\lr{v'}\leq 2\lr{v}$ or $\lr{v'}\leq 2\lr{v_{*}}$ has to be true. Hence
for any $\ell\geq 0$ we have either
\begin{equation}\label{weight-ineq}
\lr{v}^{-\ell}\leq \lr{v'}^{-\ell} \;{\rm or}\;\lr{v_{*}}^{-\ell}\leq \lr{v'}^{-\ell}.
\end{equation}
We define $\Psi(v,v_*):=(\tau_{-v_*}\circ T\circ \tau_{v_*})\psi(v)$ and $\psi_{-\ell}(v)=\lr{v}^{-\ell}\psi(v)$. 
From~\eqref{weight-ineq}, we know that one of the followings estimates is true:
\begin{equation}\label{psi-weight-1}
\begin{split}
&{\big |} \Psi(v,v_*){\big |}
\leq \int_{S^{N-1}} {\big |} \psi (v'){\big |} B(v-v_*,\omega)d\omega \\
&\leq  \lr{v}^{\ell}\int_{S^{N-1}}  {\big |}\lr{v'}^{-\ell} \psi(v') {\big |} B(v-v_*,\omega)d\omega\\
&= \lr{v}^{\ell} (\tau_{-v_*}\circ T\circ \tau_{v_*}) {\big |} \psi_{-\ell} {\big |}(v),
\end{split}
\end{equation}
or  
\begin{equation}\label{psi-weight-2}
\begin{split}
&{\big |} \Psi(v,v_*){\big |}
\leq \int_{S^{N-1}} {\big |} \psi (v'){\big |} B(v-v_*,\omega)d\omega \\
&\leq  \lr{v_{*}}^{\ell} \int_{S^{N-1}}  {\big |}\lr{v'}^{-\ell} \psi(v') {\big |} B(v-v_*,\omega)d\omega\\
&= \lr{v_{*}}^{\ell} (\tau_{-v_*}\circ T\circ \tau_{v_*}) {\big |} \psi_{-\ell} {\big |}(v).
\end{split} 
\end{equation}

Denote $f_{\ell}(v)=\lr{v}^{\ell}f(v)$ and $g_{\ell}(v_{*})=\lr{v_{*}}^{\ell}g(v_{*})$. 
 Combining~\eqref{gain-dual-rep},~\eqref{psi-weight-1} and~\eqref{psi-weight-2} , we have
\begin{equation}\label{dual-weight}
\begin{split}
& {\Big |} \int  Q^{+}(f,g)(v)\; \psi (v) dv {\Big |} \\
&\leq \iint {\Big \{} {\big |} f_{\ell}(v)g(v_*){\big |}+{\big |}f(v)g_{\ell}(v_*) {\big |} {\Big \}} 
(\tau_{-v_*}\circ T\circ \tau_{v_*}){\big |}\psi_{-\ell}{\big |}(v)dv_*dv.
\end{split}
\end{equation}
Using the formulas of~\eqref{gain-dual-rep} to the right hand side of~\eqref{dual-weight}, we have  
\begin{equation}\label{P-ineq-1}
\begin{split}
&{\Big |}\int  Q^{+}(f,g)(v)\; \psi (v) dv {\Big |}\\
&\leq \iint {\Big \{} {\big |} f_{\ell}(v)g(v-u) {\big |}+ 
{\big |} f(v)g_{\ell}(v-u) {\big |}{\Big \}} \mathcal{P}(\tau_{v}\mathcal{R}|\psi_{-\ell}|,1)(u)|u|^{\gamma} dudv
\end{split}
\end{equation}
and 
\begin{equation}\label{P-ineq-2}
\begin{split}
&{\Big |}\int Q^{+}(f,g)(v)\; \psi (v) dv {\Big |}\\
&\leq \iint {\Big \{} {\big |} f_{\ell}(u+v)g(v) {\big |}+ 
{\big |} f(u+v)g_{\ell}(v) {\big |}{\Big \}} \mathcal{P}(1, \tau_{-v}\mathcal{R}|\psi_{-\ell}|)(u)|u|^{\gamma} dudv
\end{split}
\end{equation}
We note that the right hand sides of~\eqref{P-ineq-1} and~\eqref{P-ineq-2} still preserve the form of the 
representations in~\eqref{dual-rep-P}.  Following the proofs of~\cite{ACG10}, we  have 
\begin{equation}\label{dual-w-est}
\begin{split}
&{\Big |}\int  Q^{+}(f,g)(v)\; \psi (v) dv {\Big |}\\
&\leq C{\big (} \|f_{\ell}\|_{L^{\mt{p}}_{v}(\R^{N})}\|g\|_{L^{\mt{q}}_{v}(\R^{N})} + 
\|f\|_{L^{\mt{p}}_{v}(\R^{N})}\|g_{\ell}\|_{L^{\mt{q}}_{v}(\R^{N})} {\big)} \|\psi_{-\ell}\|_{L^{\mt{r}}_{v}(\R^{N})}\\
&\leq C \|f_{\ell}\|_{L^{\mt{p}}_{v}(\R^{N})}\|g_{\ell}\|_{L^{\mt{q}}_{v}(\R^{N})} \|\psi_{-\ell}\|_{L^{\mt{r}}_{v}(\R^{N})},
\end{split}
\end{equation}
thus we conclude~\eqref{W-convolution} by duality.

The proof of estimate~\eqref{W-convolution}  is a easy consequence of above argument which can be done 
by revising~\eqref{dual-w-est}. Let $1/a_{1}=1/\mt{p}_{m}+1/m$ and $1/a_{2}=1/\mt{q}_{m}+1/m$ and note that
we then have 
\[
\frac{1}{a_{1}}+\frac{1}{\mt{q}_{m}}=1+\frac{\gamma}{N}+\frac{1}{\tau_{m}}\;\;\;{\rm or}\;\;\;
\frac{1}{\mt{p}_{m}}+\frac{1}{a_{2}}=1+\frac{\gamma}{N}+\frac{1}{\tau_{m}}
\]
which is exactly~\eqref{scaling-relation}. Parallel to~\eqref{dual-w-est}, we have  
\begin{equation}\label{dual-w-est-2}
\begin{split}
&{\Big |}\int  Q^{+}(f,g)(v)\; \psi (v) dv {\Big |}\\
&\leq C{\big (} \|f_{\ell}\|_{L^{\mt{p}_{m}}_{v}(\R^{N})}\|g\|_{L^{a_{2}}_{v}(\R^{N})} + 
\|f\|_{L^{a_{1}}_{v}(\R^{N})}\|g_{\ell}\|_{L^{\mt{q}_{m}}_{v}(\R^{N})} {\big)} \|\psi_{-\ell}\|_{L^{\mt{r}_{m}}_{v}(\R^{N})}\\
&\leq C {\big (} \|f_{\ell}\|_{L^{\mt{p}_{m}}_{v}(\R^{N})}\|g_{\ell}\|_{L^{\mt{q}_{m}}_{v}(\R^{N})} \|\lr{v}^{-\ell}\|_{L^{m}_{v}(\R^{N})}\\
&\hskip3cm+ \|f\|_{L^{\mt{p}_{m}}_{v}(\R^{N})}\|\lr{v}^{-\ell}\|_{L^{m}_{v}(\R^{N})}\|g_{\ell}\|_{L^{\mt{q}_{m}}_{v}(\R^{N})} {\big)} \|\psi_{-\ell}\|_{L^{\mt{r}_{m}}_{v}(\R^{N})}\\
&\leq C \|f_{\ell}\|_{L^{\mt{p}_{m}}_{v}(\R^{N})}\|g_{\ell}\|_{L^{\mt{q}_{m}}_{v}(\R^{N})} \|\psi_{-\ell}\|_{L^{\mt{r}_{m}}_{v}(\R^{N})},
\end{split}
\end{equation}
where we used the condition $\ell m>N$ in the last inequality. By duality, we conclude~\eqref{W-convolution-2}. 
\end{proof}

To  prove the weighted estimate for the loss term.  We begin with the following.
\begin{prop}[The Hardy-Littlewood-Sobolev inequality]
If $x,y\in\R^N$, $1<p,q<\infty$, $-N<\gamma<0$ and $\frac{1}{p}+\frac{1}{q}+\frac{-\gamma}{N}=2$, 
then we have 
\[
{\Big |} \iint f(x)|x-y|^{\gamma} g(y) dxdy {\Big |}\leq C_{N,\gamma,p}\|f\|_{L^p}\|g\|_{L^q}
\]
\end{prop}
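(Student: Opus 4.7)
The plan is to reduce the bilinear claim to a standard mapping property of the Riesz potential and then obtain that mapping property via Hedberg's pointwise bound. After replacing $f, g$ by $|f|, |g|$ and rewriting the integral as $\int f(x)\, I_{\alpha} g(x)\, dx$ with
\[
I_{\alpha} g(x) := \int_{\R^N} |x - y|^{\alpha - N} g(y)\, dy, \qquad \alpha := N + \gamma \in (0, N),
\]
H\"older's inequality in duality form reduces the claim to the linear bound $\|I_{\alpha} g\|_{L^{p'}} \leq C \|g\|_{L^q}$. The scaling relation $1/q - 1/p' = \alpha / N$ required for this bound is exactly the hypothesis $1/p + 1/q + (-\gamma)/N = 2$ re-expressed via $1/p' = 1 - 1/p$.

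To prove the Riesz-potential bound I would use Hedberg's pointwise inequality. For non-negative $g$ and a free parameter $R > 0$, split the kernel according to whether $|x-y| \leq R$ or $|x - y| > R$. A dyadic decomposition of the inner piece, together with $\alpha - N < 0$ and the Hardy--Littlewood maximal operator $M$, controls it by $C R^{\alpha} M g(x)$ (the geometric series converges precisely because $\alpha > 0$). For the outer piece, H\"older's inequality against the radial weight $|x - y|^{\alpha - N}$ on $\{|x-y| > R\}$ gives a bound of order $R^{\alpha - N / q} \|g\|_{L^q}$; this is where the strict inequality $q < N/\alpha$ (equivalently $1 < p < \infty$) actually enters. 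Optimising $R$ pointwise by balancing the two pieces produces
\[
I_{\alpha} g(x) \leq C\, \|g\|_{L^q}^{\alpha q / N}\, (M g(x))^{1 - \alpha q / N},
\]
and raising this to the $p'$-th power and integrating --- noting that $p'(1 - \alpha q / N) = q$ by the same scaling identity --- reduces everything to $\|M g\|_{L^q}^{q}$, which the strong $L^{q}$-boundedness of $M$ (valid since $q > 1$) dominates by $\|g\|_{L^q}^{q}$.

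The only real obstacle is the H\"older estimate on the outer piece: it degenerates at $p = 1$ or $p' = \infty$, and this is what forces the constant $C_{N, \gamma, p}$ to blow up at those endpoints. An alternative route would be to observe that $|x|^{\gamma}$ lies in the weak Lebesgue space $L^{N/|\gamma|, \infty}(\R^N)$ and invoke the Young convolution inequality for weak Lebesgue spaces (proved via Marcinkiewicz interpolation); that path yields the same bound but hides the endpoint subtlety inside the interpolation constant rather than in an explicit H\"older computation.
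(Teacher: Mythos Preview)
Your proposal is correct: the reduction to the Riesz-potential mapping property via duality is accurate, the scaling arithmetic checks out, and Hedberg's pointwise inequality together with the $L^{q}$-boundedness of the maximal function ($q>1$) gives the bound. The endpoint remark is also apt.

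The paper, however, does not prove this proposition at all; it merely records the Hardy--Littlewood--Sobolev inequality as a classical tool and immediately applies it in the proof of the weighted loss-term estimate (Proposition~\ref{P:Loss}). So there is no ``paper's approach'' to compare with --- your Hedberg argument is one of the standard textbook proofs, and the alternative you sketch (weak-$L^{N/|\gamma|}$ membership of $|x|^{\gamma}$ combined with weak Young/Marcinkiewicz) is another. Either is fine; neither is what the authors do, because they simply cite the result.
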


\begin{prop}\label{P:Loss}
Assume $B(v-v_*,\omega)$ defined in~\eqref{D:kernel} satisfies $-N<\gamma<0$
and~\eqref{D:Grad}. If $\ell>N/m$ and $1<\mt{p}_{m}, \mt{q}_{m}, m, \mt{r}_{m} <\infty$ satisfy
\begin{equation}\label{minus-size-condition-2}
\frac{1}{\mt{p}_{m}}+\frac{1}{\mt{r_{m}}'}<1\;,\;\frac{1}{\mt{q}_{m}}+\frac{1}{m}<1,
\end{equation}
and
\begin{equation}\label{scaling-relation-w2}
\frac{1}{\mt{p}_{m}}+\frac{1}{\mt{q}_{m}}+\frac{1}{m}=1+\frac{\gamma}{N}+\frac{1}{\mt{r}_{m}}, 
\end{equation}
then we have  
\begin{equation}\label{Lose-ineq}
\|\lr{v}^{\ell} Q^{-}(f,g)\|_{L^{\mt{r}_{m}}}\leq C(\mt{p}_{m},\ell) \|\lr{v}^{\ell} f\|_{L^{\mt{p}_{m}} }\|\lr{v}^{\ell} g\|_{L^{\mt{q}_{m}}}.
\end{equation}
Here we note that the first inequality of~\eqref{minus-size-condition-2} is equivalent to $1/\mt{p}_{m}<1/\mt{r}_{m}$. 
\end{prop}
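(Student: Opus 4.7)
The plan is to exploit the fact that the loss term is essentially a pointwise product times a Riesz-type convolution, and then reduce the weighted $L^{\mt{r}_m}$ estimate to an application of Hölder's inequality together with the Hardy--Littlewood--Sobolev (HLS) inequality. Specifically, since $b$ satisfies Grad's cutoff, one can integrate out $\omega$ and $\sigma$ to write
\[
Q^{-}(f,g)(v)=C_{0}\,f(v)\,(g*|\cdot|^{\gamma})(v), \qquad C_{0}=\int_{S^{N-1}}b(\cos\theta)\,d\omega<\infty.
\]
Unlike the gain term, the weight $\lr{v}^{\ell}$ attaches directly to $f$: there is no collisional exchange between $v$, $v_{*}$ and $v'$ to distribute it, so
\[
\lr{v}^{\ell}Q^{-}(f,g)(v)=C_{0}\bigl(\lr{v}^{\ell}f\bigr)(v)\cdot\bigl(g*|\cdot|^{\gamma}\bigr)(v).
\]

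Next I would write $g=\lr{\cdot}^{-\ell}(\lr{\cdot}^{\ell}g)=\lr{\cdot}^{-\ell}g_{\ell}$ and apply Hölder in the $v$ variable with the split
\[
\frac{1}{\mt{r}_{m}}=\frac{1}{\mt{p}_{m}}+\frac{1}{s},\qquad \frac{1}{s}:=\frac{1}{\mt{r}_{m}}-\frac{1}{\mt{p}_{m}}>0,
\]
where the strict positivity of $1/s$ is guaranteed by the hypothesis $\tfrac{1}{\mt{p}_{m}}+\tfrac{1}{\mt{r}_{m}'}<1$, equivalently $\tfrac{1}{\mt{p}_{m}}<\tfrac{1}{\mt{r}_{m}}$. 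This gives
\[
\|\lr{v}^{\ell}Q^{-}(f,g)\|_{L^{\mt{r}_{m}}}\leq C_{0}\,\|\lr{v}^{\ell}f\|_{L^{\mt{p}_{m}}}\,\bigl\|(\lr{\cdot}^{-\ell}g_{\ell})*|\cdot|^{\gamma}\bigr\|_{L^{s}}.
\]

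For the remaining convolution I would invoke HLS by duality: choosing $q_{0}$ with $\tfrac{1}{q_{0}}+\tfrac{1}{s'}+\tfrac{-\gamma}{N}=2$, i.e.\ $\tfrac{1}{q_{0}}=1+\tfrac{\gamma}{N}+\tfrac{1}{s}$, gives
\[
\bigl\|(\lr{\cdot}^{-\ell}g_{\ell})*|\cdot|^{\gamma}\bigr\|_{L^{s}}\leq C\,\|\lr{\cdot}^{-\ell}g_{\ell}\|_{L^{q_{0}}}.
\]
Then Hölder applied to $\lr{\cdot}^{-\ell}g_{\ell}$ with $\tfrac{1}{q_{0}}=\tfrac{1}{\mt{q}_{m}}+\tfrac{1}{m}$ yields $\|\lr{\cdot}^{-\ell}g_{\ell}\|_{L^{q_{0}}}\leq \|\lr{\cdot}^{-\ell}\|_{L^{m}}\|g_{\ell}\|_{L^{\mt{q}_{m}}}$, where $\|\lr{\cdot}^{-\ell}\|_{L^{m}}<\infty$ precisely because $\ell>N/m$. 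Substituting $1/q_{0}=1/\mt{q}_{m}+1/m$ into $1/q_{0}=1+\gamma/N+1/s$ and using $1/s=1/\mt{r}_{m}-1/\mt{p}_{m}$ reproduces exactly the scaling relation~\eqref{scaling-relation-w2}, closing the estimate.

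The main obstacle is bookkeeping of the exponent constraints rather than any deep estimate: the HLS step demands $1<s',q_{0}<\infty$, which translates into precisely the two conditions of~\eqref{minus-size-condition-2} — the second inequality $\tfrac{1}{\mt{q}_{m}}+\tfrac{1}{m}<1$ guarantees $q_{0}>1$, while the first inequality $\tfrac{1}{\mt{p}_{m}}+\tfrac{1}{\mt{r}_{m}'}<1$ produces a positive and finite $s$. This last requirement is the asymmetric constraint already highlighted in Lemma~\ref{T:Loss} and in~\eqref{non-symmetry}; it is unavoidable here because, in contrast to the gain term, the weight $\lr{v}^{\ell}$ cannot be redistributed between $f$ and $g$ via the collisional identity $\lr{v'}\leq 2\max(\lr{v},\lr{v_{*}})$ that was the driving mechanism in the proof of Proposition~\ref{P:Convolution-w}.
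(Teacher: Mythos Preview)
Your proof is correct and follows essentially the same approach as the paper's: both reduce the loss term to the product $f(v)\cdot(g*|\cdot|^{\gamma})(v)$, then combine H\"older's inequality with the Hardy--Littlewood--Sobolev inequality and the integrability of $\lr{v}^{-\ell}$ in $L^{m}$ under the hypothesis $\ell>N/m$. The only cosmetic difference is that the paper works by duality---pairing $Q^{-}(f,g)$ with a test function $h$ and applying the bilinear form of HLS to $\|fh\|_{L^{a_{1}}}\|g\|_{L^{a_{2}}}$ with $1/a_{1}=1/\mt{p}_{m}+1/\mt{r}_{m}'$ and $1/a_{2}=1/\mt{q}_{m}+1/m$---whereas you apply H\"older directly and then use the fractional-integration form of HLS on the convolution; these are equivalent formulations and your exponents $s,q_{0}$ satisfy $s'=a_{1}$, $q_{0}=a_{2}$.
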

\begin{proof}
Assume  $1<\mt{p}_{m} ,\mt{q}_{m} ,m, \mt{r}_{m} <\infty$ satisfy~\eqref{minus-size-condition} and~\eqref{scaling-relation}.
Let $1<a_{1},a_{2}<\infty$ be defined by
\begin{equation}\label{a-1-2}
\frac{1}{a_1}:=\frac{1}{\mt{p}_{m}}+\frac{1}{\mt{r_{m}}'},\;\frac{1}{a_2}:=\frac{1}{\mt{q}_{m}}+\frac{1}{m}.
\end{equation}
Then we have $1<a_1,a_2<\infty$ and
\begin{equation}
\frac{1}{a_1}+\frac{1}{a_2}=2+\frac{\gamma}{N}.
\end{equation}
By Hardy-Littlewood-Sobolev inequality and H\"{o}lder inequality, we have
\begin{equation}\label{Loss-HLS}
\begin{split}
{\Big |}\lr{Q^-(f,g),h} {\Big |}&={\Big |}\iint\int  f(v)g(v_*) h(v) B(v-v_*,\omega) 
d\omega dv_* dv {\Big |}\\
&={\Big |}C \iint  f(v)g(v_*) h(v) |v-v_*|^{\gamma} dv_*dv {\Big |}\\
&\leq C \|f\cdot h\|_{L^{a_1}}\|g\|_{L^{a_2}}\\
&\leq  C\|\lr{v}^{\ell}f\|_{L^{\mt{p}_{m}}}\|\lr{v}^{-\ell} h\|_{L^{\mt{r_{m}}'}}\|\lr{v}^{\ell} g\|_{L^{\mt{q}_{m}}}\|\lr{v}^{-\ell}\|_{L^{m}}\\
&\leq C\|\lr{v}^{\ell}f\|_{L^{\mt{p}_{m}}}\|\lr{v}^{-\ell} h\|_{L^{\mt{r_{m}}'}}\|\lr{v}^{\ell} g\|_{L^{\mt{q}_{m}}},
\end{split}
\end{equation}
where the last inequality holds when $\ell>N/m$. 
Then we conclude that ~\eqref{Lose-ineq} holds.
\end{proof}
\begin{rem}\label{single-weight}
Our proof also includes the estimate
\[
\|\lr{v}^{\ell} Q^{-}(f,g)\|_{L^{\mt{r}_{m}}}\leq C(\mt{p}_{m},\ell) \|\lr{v}^{\ell} f\|_{L^{\mt{p}_{m}} }
\|g\|_{L^{a_{2}}}.
\]
where $1/a_{2}=1/\mt{q}_{m}+1/m$ and
\[
\frac{1}{\mt{p}_{m}}+\frac{1}{a_{2}}=1+\frac{\gamma}{N}+\frac{1}{\mt{r}_{m}}.
\]
\end{rem}

We also need to build the weighted Strichartz estimates.   
We consider the weight $\lr{v}^{\ell},\;\ell\in\mathbb{R}$ as the multiplication operator. 
Using the  notations of~\eqref{D:solution-maps}, we note that the following communication relations hold:
\begin{equation}
 \lr{v}^{\ell} U(t)u_{0}=U(t) \lr{v}^{\ell} u_{0},\; \lr{v}^{\ell} W(t)F=W(t) \lr{v}^{\ell} F.
\end{equation}
Combining above facts with Proposition~\ref{Strichartz-est}, we have the following result.
\begin{cor}\label{W-Strichartz}
Let $u$ satisfy the kinetic transport equation~\eqref{E:KT}.  The estimate
\begin{equation}\label{E:Strichartz-w}
\begin{split}
& \|  \lr{v}^{\ell} u \|_{L^q_tL^r_xL^p_v}\\
&{\hskip 1cm}\leq C(q,r,p,N){\big (} \|   \lr{v}^{\ell} u_0\|_{L^a_{x,v}} +\|   \lr{v}^{\ell} F\|_{L^{\tilde{q}'}_tL^{\tilde{r}'}_xL^{\tilde{p}'}_v}  {\big)}
\end{split}
\end{equation}
holds for all $u_0\in L^a_{t,x}$ and all $F\in {L^{\tilde{q}'}_tL^{\tilde{r}'}_xL^{\tilde{p}'}_v} $ if and only if 
$(q,r,p)$ and $(\tilde{q},\tilde{r},\tilde{p})$ are two KT-admissible exponents triplets and $a=$HM$(p,r)=$HM$(\tilde{p}',\tilde{r}')$ with the exception of $(q,r,p)$ being an endpoint triplet.
\end{cor}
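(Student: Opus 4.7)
The plan is to reduce the weighted estimate to the unweighted Strichartz estimate of Proposition~\ref{Strichartz-est} by exploiting the fact that the weight $\langle v\rangle^{\ell}$ depends only on the velocity variable and is therefore transparent to both the time derivative $\partial_t$ and the transport derivative $v\cdot\nabla_x$.

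Concretely, first I would set $\tilde{u}:=\langle v\rangle^{\ell}u$ and observe that since $\langle v\rangle^{\ell}$ is a multiplier depending only on $v$, it commutes with $\partial_t$ and with $v\cdot\nabla_x$. Hence if $u$ solves $\partial_t u+v\cdot\nabla_x u=F$ with $u(0)=u_0$, then $\tilde u$ satisfies the kinetic transport equation
\begin{equation*}
\partial_t \tilde{u}+v\cdot\nabla_x \tilde{u}=\langle v\rangle^{\ell} F,\qquad \tilde{u}(0,x,v)=\langle v\rangle^{\ell} u_0(x,v).
\end{equation*}
Equivalently, using the Duhamel representation~\eqref{integal-equation} together with the two commutation identities noted just before the corollary, we have the pointwise identity
\begin{equation*}
\tilde{u}=U(t)\bigl(\langle v\rangle^{\ell} u_0\bigr)+W(t)\bigl(\langle v\rangle^{\ell} F\bigr).
\end{equation*}

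Next, I would apply Proposition~\ref{Strichartz-est} directly to $\tilde u$: under the assumption that $(q,r,p)$ and $(\tilde q,\tilde r,\tilde p)$ are two KT-admissible triplets (with $(q,r,p)$ non-endpoint) satisfying $a=\mathrm{HM}(p,r)=\mathrm{HM}(\tilde p',\tilde r')$, the Strichartz estimate gives
\begin{equation*}
\|\tilde u\|_{L^q_tL^r_xL^p_v}\leq C(q,r,p,N)\bigl(\|\tilde u(0)\|_{L^a_{x,v}}+\|\langle v\rangle^{\ell}F\|_{L^{\tilde q'}_tL^{\tilde r'}_xL^{\tilde p'}_v}\bigr),
\end{equation*}
which is exactly~\eqref{E:Strichartz-w}. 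Conversely, the necessity of the admissibility conditions for~\eqref{E:Strichartz-w} follows from the necessity in Proposition~\ref{Strichartz-est} applied to arbitrary data of the form $\langle v\rangle^{-\ell}u_0$ and $\langle v\rangle^{-\ell}F$; the weight then cancels and reduces to the unweighted case.

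There is essentially no obstacle here: the entire argument is the observation that $\langle v\rangle^{\ell}$ is a Fourier multiplier in $v$ alone, and hence commutes with the free transport semigroup $U(t)$ and the Duhamel operator $W(t)$. The only book-keeping needed is that multiplication by $\langle v\rangle^{\ell}$ acts isometrically at the level of the $v$-integrand in the mixed norm $L^q_tL^r_xL^p_v$ in the sense that $\|\langle v\rangle^{\ell}u\|_{L^q_tL^r_xL^p_v}$ and $\|\langle v\rangle^{\ell}u_0\|_{L^a_{x,v}}$ are precisely the quantities that appear after the reduction, so no endpoint issue is introduced beyond the one already excluded by Proposition~\ref{Strichartz-est}.
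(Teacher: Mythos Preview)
Your proof is correct and follows exactly the paper's approach: the paper simply notes the commutation relations $\lr{v}^{\ell}U(t)u_0=U(t)\lr{v}^{\ell}u_0$ and $\lr{v}^{\ell}W(t)F=W(t)\lr{v}^{\ell}F$ and then invokes Proposition~\ref{Strichartz-est}. One small quibble: $\lr{v}^{\ell}$ is a pointwise multiplication operator, not a Fourier multiplier in $v$; but this is merely terminology and does not affect the argument.
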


\subsection{Gain term only equation for $N=3$ and $-1<\gamma\leq 0$}

\begin{prop}\label{Thm-Gain-2} 
Let $N=3$ and collision kernel $B$ defined in~\eqref{D:kernel} satisfies~\eqref{D:Grad} and $-1<\gamma \leq 0$. 
Let  $\ellg=(1+\gamma)^+<3/2$. There exists a small number $\eta>0$ such that if the initial data $f_0$ is in the set
\[B^{\ellg}_{\eta}=\{f_0\in L^3_{x,v} (\mathbb{R}^3\times
\mathbb{R}^3):  \| \lr{v}^{\ellg} f_0\|_{L^3_{x,v}}<\eta\}\subset L^3_{x,v},
\] 
there exists a globally unique  mild solution 
\[
\lr{v}^{\ellg} f_+\in C([0,\infty),L^3_{x,v})\cap
L^{\mbq}([0,\infty],L^{\mbr}_xL^{\mbp}_v) 
\]
where the triple $(\mbq,\mbr,\mbp)$ lies in the set
\begin{equation}\label{solvable-g-2}
\{ (\mbq,\mbr,\mbp) | \;\frac{1}{\mbq}=\frac{3}{\mbp}-1\;,\;\frac{1}{\mbr}=\frac{2}{3}-\frac{1}{\mbp}\;,\;
\frac{1}{3}<\frac{1}{\mbp}<\frac{4}{9}\}. 
\end{equation} 
The solution map $\lr{v}^{\ellg} f_0\in B^{\ellg}_{\eta}\rightarrow \lr{v}^{\ellg} f_+\in L^{\mbq}_tL^{\mbr}_xL^{\mbp}_v$ 
is Lipschitz continuous and the solution $\lr{v}^{\ellg} f_+$ scatters with respect to the kinetic 
transport operator in $L^3_{x,v}$.  
\end{prop}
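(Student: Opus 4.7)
The plan is to mimic the proof of Proposition~\ref{Thm-Gain} (and its restatement in Section 2) but carried out at the level of the weighted quantity $F:=\lr{v}^{\ellg}f_{+}$. Since $\lr{v}^{\ellg}$ commutes with the transport operators $U(t)$ and $W(t)$, the mild formulation of \eqref{G-Cauchy} yields the fixed point equation
\[
F(t)=U(t)F_{0}+W(t)\bigl[\lr{v}^{\ellg}Q^{+}(f_{+},f_{+})\bigr],\qquad F_{0}:=\lr{v}^{\ellg}f_{0}.
\]
I would like to solve this in the space $L^{\mbq}_{t}L^{\mbr}_{x}L^{\mbp}_{v}$ for any solvable triplet in \eqref{solvable-g-2}, using the weighted Strichartz estimate of Corollary~\ref{W-Strichartz} with $a=3$ together with the weighted gain estimate \eqref{W-convolution-2} of Proposition~\ref{P:Convolution-w}.

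The key algebraic step is to locate the auxiliary exponent $m$ in \eqref{W-convolution-2} that makes the scheme close. Taking $\mt{p}_{m}=\mt{q}_{m}=\mbp$ and $\mt{r}_{m}=\tmbp'$ in \eqref{W-convolution-2}, the scaling relation \eqref{scaling-relation-w} becomes
\[
\frac{2}{\mbp}+\frac{1}{m}=1+\frac{\gamma}{3}+\frac{1}{\tmbp'}.
\]
On the other hand, the usual Strichartz matching $\frac{1}{\mbp}+\frac{1}{\mbr}=\frac{1}{\tmbp'}+\frac{1}{\tmbr'}$, $2\tmbr'=\mbr$ combined with $\frac{1}{\mbp}+\frac{1}{\mbr}=\frac{2}{3}$ (harmonic mean $=3$) force $\frac{1}{\tmbp'}=\frac{2}{\mbp}-\frac{2}{3}$. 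Substituting gives $\frac{1}{m}=\frac{1+\gamma}{3}$, so that the condition $\ell>N/m=1+\gamma$ required by Proposition~\ref{P:Convolution-w} is satisfied precisely for $\ellg=(1+\gamma)^{+}$. Moreover, $\frac{1}{\mbp}+\frac{1}{m}=\frac{1}{\mbp}+\frac{1+\gamma}{3}<1$ holds throughout the range $\frac{1}{3}<\frac{1}{\mbp}<\frac{4}{9}$, so \eqref{minus-size-condition} is verified. The $t$-variable H\"{o}lder identity $\frac{2}{\mbq}=\frac{1}{\tmbq'}$ and the KT-admissibility of $(\mbq,\mbr,\mbp)$ and $(\tmbq,\tmbr,\tmbp)$ are checked exactly as in the proof of \eqref{desire-est-sol}, using $\frac{1}{\mbq}=\frac{3}{\mbp}-1<\frac{1}{3}<\frac{1}{2}$. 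Combining \eqref{W-convolution-2} in $v$ with H\"{o}lder in $x$ and $t$ then produces the weighted analogue of \eqref{desire-est-sol},
\[
\bigl\|\lr{v}^{\ellg}Q^{+}(f_{+},f_{+})\bigr\|_{L^{\tmbq'}_{t}L^{\tmbr'}_{x}L^{\tmbp'}_{v}}
\leq C\bigl\|\lr{v}^{\ellg}f_{+}\bigr\|^{2}_{L^{\mbq}_{t}L^{\mbr}_{x}L^{\mbp}_{v}}.
\]

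With this bilinear estimate in hand, the rest of the proof is a direct transcription of Section~2. Feeding it into Corollary~\ref{W-Strichartz} yields
\[
\|F\|_{L^{\mbq}_{t}L^{\mbr}_{x}L^{\mbp}_{v}}\leq C_{0}\|F_{0}\|_{L^{3}_{x,v}}+C_{2}\|F\|^{2}_{L^{\mbq}_{t}L^{\mbr}_{x}L^{\mbp}_{v}},
\]
so that the standard contraction-mapping argument produces a unique global $F$ whenever $\|F_{0}\|_{L^{3}_{x,v}}<\eta$ with $\eta$ small. Non-negativity of $f_{+}$ follows by the same Duhamel iteration scheme as in Corollary~\ref{pos-gain}, since every term in \eqref{positive} is non-negative and the series converges in the weighted Strichartz norm for small data. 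Continuity in time, Lipschitz dependence of the solution map, and scattering in $L^{3}_{x,v}$ for $\lr{v}^{\ellg}f_{+}$ are obtained verbatim from Subsection~\ref{s-continuity}, with every estimate performed on $F$ instead of $f$ (using Corollary~\ref{W-Strichartz}, the dual homogeneous estimate, and the above weighted bilinear bound).

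I expect the main obstacle to be purely algebraic rather than analytical: verifying that the weighted scaling relation \eqref{scaling-relation-w} selects exactly the auxiliary exponent $m=3/(1+\gamma)$ and that this choice is simultaneously compatible with the KT-admissibility range \eqref{pr-star-2}, the H\"{o}lder conditions in $(t,x)$, the harmonic-mean constraint $a=3$, and the size condition $\ell>N/m$. Once this is done, the entire machinery of Section~2 carries over unchanged, which is why the solvable set \eqref{solvable-g-2} coincides with \eqref{solvable-g} for $N=3$: the one extra degree of freedom provided by $\lr{v}^{\ellg}$ is spent precisely on compensating the shift $\gamma=-1\leadsto\gamma\in(-1,0]$ of the kinetic singularity, leaving the Strichartz geometry intact.
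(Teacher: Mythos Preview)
Your proposal is correct and follows essentially the same route as the paper: both reduce the proof to the weighted bilinear estimate \eqref{desire-est-sol-2} by choosing $\mt{p}_m=\mt{q}_m=\mbp$, $\mt{r}_m=\tmbp'$ in \eqref{W-convolution-2}, which forces $1/m=(1+\gamma)/3$ and hence the weight exponent $\ellg=(1+\gamma)^+>N/m$, after which the contraction argument of Section~2 carries over verbatim via Corollary~\ref{W-Strichartz}. Your write-up is in fact more explicit than the paper's in verifying the side condition \eqref{minus-size-condition} and the Strichartz matching $1/\tmbp'=2/\mbp-2/3$, but the underlying strategy is identical.
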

\begin{proof}
The proof can be done by exactly the same argument as that for Proposition~\ref{Thm-Gain} except that 
the estimates there need to be replaced by weighted ones. Parallel to~\eqref{contraction}, we claim that the 
following key estimates hold:
\begin{equation}\label{contraction-2}
\begin{split}
\|S (\lr{v}^{\ellg} f_+)\|_{L^{\mbq}_tL^{\mbr}_xL^{\mbp}_v}
& \leq C_{0}\| \lr{v}^{\ellg} f_{0}\|_{L^N_{x,v}}+
C_{1}\| \lr{v}^{\ellg}Q^+(f_+,f_+)\|_{L^{\tmbq'}_tL^{\tmbr'}_xL^{\tmbp'}_v} \\
&  \leq C_{0}\| \lr{v}^{\ellg} f_{0}\|_{L^N_{x,v}}+C_{2}\| \lr{v}^{\ellg} f_+\|^2_{L^{\mbq}_tL^{\mbr}_xL^{\mbp}_v}.
\end{split}
\end{equation}
where we choose the same triplets as Proposition~\ref{Thm-Gain} (see also Definition~\ref{solvable-tri}).
Then the first inequality follows from Corollary~\ref{W-Strichartz}. To show the second inequality is 
equal to show
\begin{equation}\label{desire-est-sol-2}
\| \lr{v}^{\ellg}Q^+(f_+,f_+)\|_{L^{\tmbq'}_tL^{\tmbr'}_xL^{\tmbp'}_v}  \leq C 
 \| \lr{v}^{\ellg} f_+\|^2_{L^{\mbq}_tL^{\mbr}_xL^{\mbp}_v}, 
\end{equation}
which is parallel to~\eqref{desire-est-sol}. Please note that the only change occurs at $v$ variable and 
at which we should verify.  

When $\gamma=2-N=-1$, the relation~\eqref{scaling-relation-0} and the proof of~\eqref{desire-est-sol} require
\begin{equation}\label{v-scaling}
\frac{1}{\mbp}+\frac{1}{\mbp}=\frac{2}{3}+\frac{1}{\tmbp'}.
\end{equation}

When $-1<\gamma\leq 0$,  we can rewrite~\eqref{v-scaling} as  
\begin{equation}\label{v-scaling-w}
\frac{1}{\mbp}+\frac{1}{\mbp}+\frac{1+\gamma}{3}=1+\frac{\gamma}{3}+\frac{1}{\tmbp'}
\end{equation}
and note that this is the form of~\eqref{scaling-relation-w} with $1/m=(1+\gamma)/3\leq 1/3$. Since $1/3<1/\mbp<4/9$, 
the relation~\eqref{minus-size-condition}  is satisfied. Let  $\ellg=(3/m)^+=(1+\gamma)^+>N/m=(1+\gamma)$, 
then we have~\eqref{W-convolution-2}, i.e.,~\eqref{desire-est-sol-2}. 
\end{proof}

Also we need a weighted version of  Proposition~\ref{P-p2-est}. 
\begin{prop}\label{P-p2-est-w}
Let $N=3$ and $a_2=15/8$. Under the same assumption as Proposition~\ref{Thm-Gain-2}, if we  further assume 
$\|\lr{v}^{\ellg} f_0\|_{L^{a_2}_{x,v}}<\infty$, then solution $f_+$ in Proposition~\ref{Thm-Gain-2} also satisfies
\begin{equation}
\|\lr{v}^{\ellg}f_+\|^2_{L^{q_2}_t L^{r_2}_x L^{p_2}_v}<\infty
\end{equation} 
where $(1/q_2,1/r_2,1/p_2)=(1/2,\;(12+\gamma)/30,\;(20-\gamma)/30)$ is a KT-admissible triplet with $1/p_2+1/r_2=2/a_2$. 
\end{prop}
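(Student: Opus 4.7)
The plan is to parallel the proof of Proposition \ref{P-p2-est} verbatim, replacing each unweighted ingredient by its weighted analog. First I would apply the weighted inhomogeneous Strichartz estimate from Corollary \ref{W-Strichartz} to the Duhamel formulation $f_+ = U(t)f_0 + W(t)Q^+(f_+, f_+)$, with two KT-admissible triplets $(q_2, r_2, p_2)$ and $(\tilde q_2, \tilde r_2, \tilde p_2)$ whose harmonic means both equal $a_2 = 15/8$, obtaining
\[
\|\lr{v}^{\ellg} f_+\|_{L^{q_2}_t L^{r_2}_x L^{p_2}_v} \le C_0 \|\lr{v}^{\ellg} f_0\|_{L^{a_2}_{x,v}} + C_1 \|\lr{v}^{\ellg} Q^+(f_+, f_+)\|_{L^{\tilde q_2'}_t L^{\tilde r_2'}_x L^{\tilde p_2'}_v}.
\]
Mirroring the bilinear bound \eqref{Gain-p2}, I would then derive
\[
\|\lr{v}^{\ellg} Q^+(f_+, f_+)\|_{L^{\tilde q_2'}_t L^{\tilde r_2'}_x L^{\tilde p_2'}_v} \le C \|\lr{v}^{\ellg} f_+\|_{L^{\mbq}_t L^{\mbr}_x L^{\mbp}_v} \|\lr{v}^{\ellg} f_+\|_{L^{q_2}_t L^{r_2}_x L^{p_2}_v}
\]
by combining the weighted velocity estimate \eqref{W-convolution-2} of Proposition \ref{P:Convolution-w} (applied pointwise in $(t,x)$ with a weight exponent $m$ satisfying $\ellg > 3/m$) with H\"older's inequality in $x$ and $t$. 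Proposition \ref{Thm-Gain-2} furnishes $C_1 \|\lr{v}^{\ellg} f_+\|_{L^{\mbq}_t L^{\mbr}_x L^{\mbp}_v} < 1$ for sufficiently small $\eta$, so the second factor can be absorbed into the left-hand side, yielding $\|\lr{v}^{\ellg} f_+\|_{L^{q_2}_t L^{r_2}_x L^{p_2}_v} \le C_3 \|\lr{v}^{\ellg} f_0\|_{L^{a_2}_{x,v}} < \infty$.

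The main obstacle is exponent bookkeeping: one must simultaneously satisfy (i) KT-admissibility of $(q_2,r_2,p_2)$ and $(\tilde q_2,\tilde r_2,\tilde p_2)$ together with $\mathrm{HM}(p_2,r_2) = \mathrm{HM}(\tilde p_2', \tilde r_2') = a_2 = 15/8$, avoiding endpoint triplets; (ii) the H\"older relations $1/\tilde q_2' = 1/\mbq + 1/q_2$ and $1/\tilde r_2' = 1/\mbr + 1/r_2$; (iii) the weighted $v$-scaling $1/\mbp + 1/p_2 + 1/m = 1 + \gamma/3 + 1/\tilde p_2'$; and (iv) the size conditions $1/\mbp + 1/m < 1$, $1/p_2 + 1/m < 1$ from \eqref{minus-size-condition}, together with $\ellg > 3/m$. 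Taking $1/m = (1+\gamma)/3 - \delta$ for arbitrarily small $\delta > 0$, which is permitted since $\ellg = (1+\gamma)^+$, the $v$-scaling in (iii) reduces to $1/\mbp + 1/p_2 = 2/3 + \delta + 1/\tilde p_2'$, a small perturbation of the scaling used in \eqref{choose-p-2} for the $\gamma = -1$ case. The analogous family of choices $1/\tilde p_2' = 1/\mbp - \gamma/30 - \delta$, $1/r_2 = (12+\gamma)/30$, $1/p_2 = (20-\gamma)/30$, $1/q_2 = (12 - 3\gamma)/30$ then provides an admissible configuration across $-1 < \gamma \le 0$; the size bounds in (iv) are immediate from $1/\mbp < 4/9$ and $1/p_2 \le 2/3$, which keep both sums strictly below $1$ on the interior of the parameter range.
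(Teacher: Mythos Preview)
Your approach is essentially the paper's: mimic Proposition~\ref{P-p2-est} with the weighted Strichartz estimate (Corollary~\ref{W-Strichartz}), establish the weighted bilinear bound \eqref{Gain-p2-w} by invoking \eqref{W-convolution-2} from Proposition~\ref{P:Convolution-w}, and absorb via smallness of $\|\lr{v}^{\ellg} f_+\|_{L^{\mbq}_tL^{\mbr}_xL^{\mbp}_v}$. The paper's proof does exactly this, taking $1/m=(1+\gamma)/3$ so that the $v$--scaling relation $1/\mbp+1/p_2=2/3+1/\tilde p_2'$ from the $\gamma=-1$ case is preserved verbatim and merely reinterpreted through \eqref{scaling-relation-w}.

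There is, however, a bookkeeping slip in your exponent choice. With $\delta>0$ your relation becomes $1/\mbp+1/p_2=2/3+\delta+1/\tilde p_2'$, and you keep $1/p_2=(20-\gamma)/30$, $1/r_2=(12+\gamma)/30$ fixed. But the harmonic--mean constraint $1/\tilde p_2'+1/\tilde r_2'=16/15$ together with your H\"older relation $1/\tilde r_2'=1/\mbr+1/r_2$ and $1/\mbp+1/\mbr=2/3$, $1/p_2+1/r_2=16/15$ force
\[
\tfrac{16}{15}-\tfrac{1}{\mbr}-\tfrac{1}{r_2}=\tfrac{1}{\tilde p_2'}=\tfrac{1}{\mbp}+\tfrac{1}{p_2}-\tfrac{2}{3}-\delta,
\]
which simplifies to $\delta=0$. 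So no positive $\delta$ is compatible with (i)--(iii) simultaneously. The perturbation is also unnecessary: with $\delta=0$ one has $3/m=1+\gamma<(1+\gamma)^+=\ellg$, so the hypothesis $\ellg>3/m$ of \eqref{W-convolution-2} is already met. Setting $\delta=0$ recovers exactly the paper's argument. (Note also that the size condition $1/p_2+1/m=1+3\gamma/10<1$ holds strictly precisely for $\gamma<0$; this is consistent with Corollary~\ref{C-L-p2-w} excluding $\gamma=0$.)
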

\begin{proof}
Follow the idea of Proposition~\ref{P-p2-est}, it suffices to show that 
\begin{equation}\label{Gain-p2-w}
\|\lr{v}^{\ellg} Q^+(f_+,f_+)\|_{L^{\tilde{q}'_{2}}_tL^{\tilde{r}'_{2}}_xL^{\tilde{p}'_{2}}_v} \leq C
\|\lr{v}^{\ellg} f_+\|_{L^{\mbq}_tL^{\mbr}_xL^{\mbp}_v}\|\lr{v}^{\ellg} f_+\|_{L^{q_{2}}_tL^{r_{2}}_xL^{p_{2}}_v}.
\end{equation}
Using the same trick of rewriting~\eqref{v-scaling} as~\eqref{v-scaling-w}, we rewrite~\eqref{uni-v}, i.e.,
${1}/{\mbp}+{1}/{p_{2}}={2}/{3}+{1}/{\tilde{p}'_{2}}$ as
\begin{equation}
\frac{1}{\mbp}+\frac{1}{p_{2}}+\frac{1+\gamma}{3}=1+\frac{\gamma}{3}+\frac{1}{\tilde{p}'_{2}}
\end{equation}
and apply~\eqref{W-convolution-2} of Proposition~\ref{P:Convolution-w}. 
\end{proof}

Using again the argument of Proposition~\ref{P-p2-est-w} and Proposition~\ref{P:Loss} and the result 
of Proposition~\ref{P:Convolution-w}, we have the following
  weighted version of Corollary~\ref{C-L-p2}.
\begin{cor}\label{C-L-p2-w} 
Use the same notations as Proposition~\ref{P-p2-est-w} but $\gamma\neq 0$.
Suppose $\lr{v}^{\ellg} f_{1}\in L^{\mbq}_tL^{\mbr}_xL^{\mbp}_v$ and $\lr{v}^{\ellg} f_{2}\in L^{q_{2}}_tL^{r_{2}}_xL^{p_{2}}_v$. Then 
$\lr{v}^{\ellg} Q^{\pm}(f_{1},f_{2})\in L^{\tilde{q}'_{2}}_tL^{\tilde{r}'_{2}}_xL^{\tilde{p}'_{2}}_v$ and 
\[
\begin{split}
& \|\lr{v}^{\ellg} Q^-(f_1,f_2)\|_{L^{\tilde{q}'_{2}}_tL^{\tilde{r}'_{2}}_xL^{\tilde{p}'_{2}}_v} \leq C
\|\lr{v}^{\ellg} f_1\|_{L^{\mbq}_tL^{\mbr}_xL^{\mbp}_v}\|\lr{v}^{\ellg} f_2\|_{L^{q_{2}}_tL^{r_{2}}_xL^{p_{2}}_v},\\
&\|\lr{v}^{\ellg} Q^+(f_1,f_2)\|_{L^{\tilde{q}'_{2}}_tL^{\tilde{r}'_{2}}_xL^{\tilde{p}'_{2}}_v} \leq C
\|\lr{v}^{\ellg} f_1\|_{L^{\mbq}_tL^{\mbr}_xL^{\mbp}_v}\|\lr{v}^{\ellg} f_2\|_{L^{q_{2}}_tL^{r_{2}}_xL^{p_{2}}_v},\\
&\|\lr{v}^{\ellg} Q^+(f_1,f_2)\|_{L^{\tilde{q}'_{2}}_tL^{\tilde{r}'_{2}}_xL^{\tilde{p}'_{2}}_v} \leq C
\|\lr{v}^{\ellg} f_2\|_{L^{\mbq}_tL^{\mbr}_xL^{\mbp}_v}\|\lr{v}^{\ellg} f_1\|_{L^{q_{2}}_tL^{r_{2}}_xL^{p_{2}}_v},
\end{split}
\]
\end{cor}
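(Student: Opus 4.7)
The plan is to transplant the proof of Corollary~\ref{C-L-p2} to the weighted setting, replacing the unweighted $v$-variable bounds of Proposition~\ref{P:Convolution} and Lemma~\ref{T:Loss} by their weighted counterparts~\eqref{W-convolution-2} and~\eqref{Lose-ineq}. Fix $(t,x)$. I would first apply~\eqref{W-convolution-2} with $\ell=\ellg$ and $\frac{1}{m}=\frac{1+\gamma}{3}$ to obtain
\[
\|\lr{v}^{\ellg}Q^{+}(f_1,f_2)(t,x,\cdot)\|_{L^{\tmbp'_2}_v}\leq C\|\lr{v}^{\ellg}f_1(t,x,\cdot)\|_{L^{\mbp}_v}\|\lr{v}^{\ellg}f_2(t,x,\cdot)\|_{L^{p_2}_v}.
\]
The required scaling $\frac{1}{\mbp}+\frac{1}{p_2}+\frac{1+\gamma}{3}=1+\frac{\gamma}{3}+\frac{1}{\tmbp'_2}$ is the rewritten form of~\eqref{uni-v} already used inside Proposition~\ref{P-p2-est-w}; the size conditions $\frac{1}{\mbp}+\frac{1+\gamma}{3}<1$ and $\frac{1}{p_2}+\frac{1+\gamma}{3}<1$ follow from $\frac{1}{3}<\frac{1}{\mbp}<\frac{4}{9}$, $\frac{1}{p_2}=\frac{20-\gamma}{30}$, and $-1<\gamma<0$; and the weight requirement $\ellg>3/m=1+\gamma$ holds by the definition $\ellg=(1+\gamma)^+$.

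Next I would take the $L^{\tmbr'_2}_x$ norm and then the $L^{\tmbq'_2}_t$ norm, applying H\"older twice with the identities $\frac{1}{\mbr}+\frac{1}{r_2}=\frac{1}{\tmbr'_2}$ and $\frac{1}{\mbq}+\frac{1}{q_2}=\frac{1}{\tmbq'_2}$ inherited from~\eqref{uni-x} and~\eqref{uni-time}. This yields the first gain estimate. The third gain estimate follows by the same argument after swapping $f_1\leftrightarrow f_2$, permitted because~\eqref{W-convolution-2} is symmetric in its two arguments; only the allocation of which factor lives in $L^{\mbq}_tL^{\mbr}_xL^{\mbp}_v$ versus $L^{q_2}_tL^{r_2}_xL^{p_2}_v$ changes.

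For the loss estimate I would run the identical H\"older scheme starting from~\eqref{Lose-ineq} in place of~\eqref{W-convolution-2}. Scaling and weight hypotheses are unchanged; the only genuinely new item is the asymmetry constraint $\frac{1}{\mt p_m}<\frac{1}{\mt r_m}$ of~\eqref{minus-size-condition-2}, which in our configuration reads $\frac{1}{\mbp}<\frac{1}{\tmbp'_2}$. Using $\frac{1}{\tmbp'_2}-\frac{1}{\mbp}=\frac{1}{p_2}-\frac{2}{3}=\frac{20-\gamma}{30}-\frac{20}{30}=-\frac{\gamma}{30}$, this is equivalent to $\gamma<0$, which is exactly our standing hypothesis and accounts for the exclusion $\gamma\neq 0$ in the statement (in harmony with the failure of Lemma~\ref{T:Loss} at $\gamma=0$).

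The main obstacle I foresee is pure bookkeeping: simultaneously satisfying the weighted scaling~\eqref{scaling-relation-w}/\eqref{scaling-relation-w2}, the two size conditions in~\eqref{minus-size-condition}/\eqref{minus-size-condition-2}, the KT-admissibility of both triples, the weight bound $\ellg>1+\gamma$, and — for $Q^{-}$ — the asymmetry constraint, for the specific choices $\ellg=(1+\gamma)^+$ and $(q_2,r_2,p_2)=(2,\,30/(12+\gamma),\,30/(20-\gamma))$ of Proposition~\ref{P-p2-est-w}. No ingredient beyond Propositions~\ref{P:Convolution-w}, \ref{P:Loss} and~\ref{P-p2-est-w} is needed.
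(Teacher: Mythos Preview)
Your proposal is correct and matches the paper's approach exactly: the paper gives no standalone proof for this corollary but simply says it follows by combining the argument of Proposition~\ref{P-p2-est-w} with Propositions~\ref{P:Convolution-w} and~\ref{P:Loss}, which is precisely the H\"older-plus-weighted-convolution scheme you outline. Your verification of the constraints, including the computation $\frac{1}{\tmbp'_2}-\frac{1}{\mbp}=-\frac{\gamma}{30}$ that isolates the asymmetry condition for $Q^{-}$ and explains the exclusion $\gamma\neq 0$, is accurate and in fact more detailed than what the paper writes out.
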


\subsection{Proof of Theorem~\ref{result2}}

\begin{proof}[Proof of Theorem~\ref{result2}]
As we did for the case $\gamma=-1$, we use the solution $f_+$ in Proposition~\ref{Thm-Gain-2} to construct
the beginning condition of the the Kaniel-Shinbrot iteration, i.e. Let $g_1=f_+$ and $h_1\equiv 0$. Then the 
system~\eqref{h-g-2} gives $g_2=f_+=g_1$. By Corollary~\ref{C-L-p2-w} and the argument after~\eqref{p2boundf+},
we have $\lr{v}^{\ellg}L(g_{1})\in L^{\mbq_{2}}_{t}L^{\mbr_{2}}_{x}L^{\mbp_{2}}_{v}$ where $(1/\mbq_{2})'=1/\tilde{q}_{2}+1/\mbq$
, $(1/\mbr_{2})'=1/\tilde{r}_{2}+1/\mbr$ and $(1/\mbp_{2})'=1/\tilde{p}_{2}+1/\mbp$ ($(q_2,r_2,p_2)$ is given by 
Proposition~\ref{P-p2-est-w}). Therefore $L(g_{1})$ is pointwisely a.e. well-defined. Then we can compute $h_2$ by~\eqref{h-g-2}
and have the beginning condition $0\leq h_1\leq h_2\leq g_2\leq g_1$.  Hence the limit  functions $g,h$ of iteration exist
and we have 
\begin{equation}\label{g-h-finite-2}
\begin{split}
&\lr{v}^{\ellg} g,\;\lr{v}^{\ellg} h \in  L^{\mbq}([0,\infty],L^{\mbr}_xL^{\mbp}_v), \\
& \lr{v}^{\ellg} g,\;\lr{v}^{\ellg} h \in  L^{q_{2}}([0,\infty],L^{r_{2}}_xL^{p_{2}}_v), \\
& \lr{v}^{\ellg} Q^{+}(g,g), \;\lr{v}^{\ellg} Q^{+}(h,h)\in L^{\tmbq'}_tL^{\tmbr'}_xL^{\tmbp'}_{v},\\
& \lr{v}^{\ellg} Q^{\pm}(g,g),\;\lr{v}^{\ellg} Q^{\pm}(h,h)\in L^{\tilde{q}'_{2}}_tL^{\tilde{r}'_{2}}_xL^{\tilde{p}'_{2}}_{v}.
\end{split}
\end{equation}

Replacing the estimates from Lemma~\ref{w-equation} to the end of Section 3 by their weighted version, we
see that the remaining part of the proof follows. For example,~\eqref{w-est} is replaced by 
\begin{equation}\label{w-est-2}
\begin{split}
&\|\lr{v}^{\ellg} w\|_{L^{q_{2}}([t_{0},s],L^{r_{2}}_xL^{p_{2}}_v)}\\
&\leq C( \| \lr{v}^{\ellg} g\|_{L^{\mbq}([t_{0},s],L^{\mbr}_xL^{\mbp}_v)}+ \|\lr{v}^{\ellg} h\|_{L^{\mbq}([t_{0},s],L^{\mbr}_xL^{\mbp}_v)})
\| \lr{v}^{\ellg} w\|_{L^{q_{2}}(([t_{0},s],L^{r_{2}}_xL^{p_{2}}_v)}\\
&:=C(g,h,s) \|\lr{v}^{\ellg} w\|_{L^{q_{2}}([t_{0},s],L^{r_{2}}_xL^{p_{2}}_v)},
\end{split}
\end{equation}
and the others are similar. 
\end{proof}

\noindent{\bf Acknowledgments.} L.-B. He is   supported by NSF of CHINA under Grants 11771236 and 12141102. 
J.-C. Jiang was supported in part by 
 National Sci-Tech Grant MOST 109-2115-M-007-002-MY3.

\end{document}